\newtheorem{theorem}{Theorem}[section]
\numberwithin{equation}{section}
\newtheorem{remark}[theorem]{Remark}
\newtheorem{corollary}[theorem]{Corollary}
\newtheorem{lemma}[theorem]{Lemma}
\newcommand\sometext
\newcommand{\tb}{\textbf}
\newcommand{\loc}{\mathrm{loc}}
\newcommand{\subscript}[2]{$#1 _ #2$}
\newcommand{\norm}[1]{\left\lVert#1\right\rVert}
    \title[transition to instability] {The transition to instability for stable shear flows in inviscid fluids}
\author{Daniel Sinambela}
\address[D. Sinambela]{Department of Mathematics, New York University Abu Dhabi, Saadiyat Island, P.O. Box 129188, Abu Dhabi, United Arab Emirates.}
\email{dos2346@nyu.edu}
\author{Weiren Zhao}
\address[W. Zhao]{Department of Mathematics, New York University Abu Dhabi, Saadiyat Island, P.O. Box 129188, Abu Dhabi, United Arab Emirates.}
\email{zjzjzwr@126.com, wz19@nyu.edu}
\begin{document}
\begin{abstract}
    In this paper, we study the generation of eigenvalues of a stable monotonic shear flow under perturbations in $C^s$ with $s<2$. More precisely, we study the Rayleigh operator $\mathcal{L}_{U_{m,\gamma}}= U_{m,\gamma}\partial_x-U''_{m,\gamma}\partial_x\Delta^{-1}$ associated with perturbed shear flow $(U_{m,\gamma}(y),0)$ in a finite channel $\mathbb{T}_{2\pi}\times [-1,1]$ where $U_{m,\gamma}(y)=U(y)+m\gamma^2\widetilde\Gamma(y/\gamma)$ with $U(y)$ being a stable monotonic shear flow and $\big\{m\gamma^2\widetilde\Gamma(y/\gamma)\big\}_{m\geq 0}$ being a family of perturbations parameterized by $m$. We prove that there exists $m_*$ such that for $0\leq m<m_*$, the Rayleigh operator has no eigenvalue or embedded eigenvalue, therefore linear inviscid damping holds. Otherwise, instability occurs when $m\geq m_*$. Moreover, at the nonlinear level, we show that asymptotic instability holds for $m$ near $m_*$ and growing modes exist for $m>m_*$ which equivalently leads to instability.
\end{abstract}
\maketitle
%\tableofcontents

\section{Introduction}\label{Introduction}
\allowdisplaybreaks
We consider the two dimensional incompressible Euler equations in a finite $x$-periodic channel $\{(x,y): x\in \mathbb{T}_{2\pi},  y \in [-1,1]\},$
\begin{equation}\label{original euler}
 \begin{cases}
    \partial_t\tb{u} +\tb{u} \cdot \nabla \tb{u}+\nabla P=0 ,\\
   \nabla \cdot \tb{u}=0 ,\\
    v(t,x,-1)=0=v(t,x,1) ,
\end{cases}  
\end{equation}
 where $\tb{u}:=(u(t,x,y),v(t,x,y))$. Using the momentum (both horizontal and vertical) equations and incompressibility condition, one can recast \eqref{original euler} in terms of vorticity $\mathfrak{w}= \partial_xv(t,x,y)-\partial_y u(t,x,y)$ as follows
 \begin{equation}\label{Euler Equations}
     \partial_t \mathfrak{w} + \tb{u} \cdot \nabla \mathfrak{w}=0.
 \end{equation}
 A natural first step in our overall analysis is the linearization of \eqref{Euler Equations} around a shear flow $(U(y),0)$. This process yields the linearized problem
\begin{equation}\label{Linearized Euler Equations}
    \partial_t \omega + U(y)  \partial_x \omega +U''(y)\partial_x(-\Delta)^{-1}\omega=0,
\end{equation}
where we use $\omega$ to denote vorticity of the linearized equation. In terms of the stream function $\psi$, the linearized equation reads
\begin{equation} \label{linearized equation in terms of streamfunction}
    \partial_t \Delta \psi + U(y) \partial_x \Delta \psi-U''(y)\partial_x \psi=0.
\end{equation}

For the remaining portion of the paper, we will use $\mathcal{L}_{U}$ to denote the linearized operator displayed in \eqref{Linearized Euler Equations}, namely
\begin{equation}
\mathcal{L}_{U}=U(y)\partial_x - U''(y)\partial_x(\Delta^{-1}).
\end{equation} In the literature, the operator $\mathcal{L}_{U}$ goes by the name Rayleigh operator. Throughout the paper, we cling to the following assumptions on the background horizontal velocity $U$, 
\begin{enumerate}
[label=\textbf{(\subscript{A}{{\arabic*}})}]
    \item \label{Spectral} $\mathcal{L}_{U}$ has no eigenvalue or embedded eigenvalue.
    \item \label{Monotone} $U$ is monotonic, namely, there exits $c_0$ such that $U'>c_0>0$.
\end{enumerate}

It is well-understood that if assumption~\ref{Spectral} holds, then the background shear flow $(U(y),0)$ is linearly stable. Moreover, it was predicted earlier by Orr \cite{orr1907} in 1907 that for the Couette flow, the velocity field converges to a shear flow as time goes to infinity. In the modern language, this phenomenon goes by the name inviscid damping. A couple decades after that, Case \cite{Case1960} predicted that the linear inviscid damping still holds true even for monotonic shear flows. Since then, there has been an immensely growing number of active research in the area.  We point readers to some of the recent papers on the subject: \cite{zillinger2017linear, WeiZhangZhao2015, jia2020linear,jia2020linearGevrey} for the results on the linear inviscid damping pertaining to monotonic shear flows. For non-monotonic shears, however, there is another interesting phenomena that has been observed at linear level known as vorticity depletion.  We also refer to some recent papers in this direction and references therein: \cite{bedrossiancotizelatiVicol2019vortex, WeiZhangZhao2019, WeiZhangZhao2020, IonescuJia2020,ionescuJia2022spectraldensity, ionescuSameerJia2022linear}.

Going back to inviscid damping, in the past few years, a number of breakthrough papers have pointed out that such damping mechanism can indeed be carried over from linear to nonlinear level. We mention some results by \cite{BedrossianMasoudi2015,  MasmoudiZhao2020, ionescuJia2020non, IonescuJia2022}. It is worth mentioning that the aforementioned papers require Gevrey-smooth perturbations in order to allow for the inviscid damping to occur. This triggers one to ask a question what if the perturbations is taken to be less smooth? It has been observed that the nonlinear inviscid damping has everything to do with the class of perturbation and the topology of the function space where the perturbation is taken. In \cite{LinZeng2011}, Lin and Zeng proved that nonlinear inviscid damping for Couette flow is not true when the velocity perturbation is chosen in $H^s$ for $s< 5/2$. Their idea is to perturb the Couette flow, such that the spectral assumption \ref{Spectral} fails to hold. More precisely, they proved that by choosing suitable $(m,\gamma)$, the Rayleigh operator $\mathcal{L}_{U_{m,\gamma}}$ has an embedded eigenvalue. The perturbed shear flow in their paper takes the form
\begin{equation*}
    (U_{m,\gamma},0)=\Big(y+m\gamma^2E\big(\frac{y}{\gamma}\big),0\Big),
\end{equation*} 
where $E$ is the error function. 

Studying the spectra of an operator under certain class of perturbation has been a classical subject of investigation in mathematics. For self-adjoint operators, in particular the Schr\"{o}dinger operator, the spectra theory is well-developed, see \cite{kato1995, Kiselev1998, ChristKiselev1998, Remling1998, simon2000schrodinger, killip2002perturbations} and the references therein. The present work sees the importance of studying the spectra of the linearized operator under a class of perturbation. In particular, we study the spectra changing of the Rayleigh operator 
\begin{equation}\label{linearized operator}
\mathcal{L}_{U_{m,\gamma}}= U_{m,\gamma}\partial_x-U''_{m,\gamma}\partial_x\Delta^{-1},
\end{equation}
associated with shear flow $(U_{m,\gamma}(y),0)$ in a finite channel $\mathbb{T}_{2\pi}\times [-1,1]$ where the perturbed horizontal velocity takes the form 
$$
U_{m,\gamma}(y)=U(y)+m\gamma^2\widetilde\Gamma(y/\gamma)
$$ with $\big\{m\gamma^2\widetilde\Gamma(y/\gamma)\big\}_{m\geq 0}$ being a family of the perturbations with an amplitude parameter $m$. 

\subsection{Main results}
Let us now state the main contributions of the paper. In the present work, we classify three different regimes of the existence and nonexistence of eigenvalue or embedded eigenvalue of the Rayleigh operator \eqref{linearized operator} associated with the shear flow $(U_{m,\gamma}(y),0)$, where
\begin{equation}\label{perturbed shear}
    U_{m,\gamma}(y)=U(y)+ m \gamma\int_{0}^{y} \Gamma(z/\gamma)\; dz=:U(y)+m \gamma^2 \widetilde{\Gamma}(y/\gamma),
\end{equation}
and $\Gamma$ satisfies
\begin{equation}\label{condition on Gamma}
  \Gamma \in L^1(\mathbb{R}) \cap H^4(\mathbb{R}),\quad -C_1<\dfrac{\Gamma'(y)}{y}<0,\quad \int_{\mathbb{R}} \dfrac{\Gamma' (y)}{y} \;dy=-1. 
\end{equation}

This classification depends on the distance of the parameter $m$ relative to $m_*$.
 \begin{theorem}[Linear]\label{linear}
Let $U \in C^4(-1,1)$ satisfy \ref{Monotone} together with the conditions $ U(0)=0, U''(0)=0$ and $-C \leq U''(y)/U(y)<0$. If the
assumption~\ref{Spectral} holds, then there exist $\delta_0$ and $\gamma_0>0$ such that for all $0<\gamma < \gamma_0$, there is $m_*$ so that the Rayleigh operator $\mathcal{L}_{U_{m,\gamma}}$
\begin{enumerate}[label=\normalfont{(\alph*)}]
    \item\label{Stability}\textup{(Stability)} has no eigenvalue or embedded eigenvalue when $0\leq m<m_*$;\\
    
    \item\label{Neutral mode} \textup{(Neutral mode)}  has only an embedded eigenvalue when $m=m_*$;\\
    
    \item\label{Instability} \textup{(Instability)} has at least two eigenvalues (each corresponds to unstable and stable mode respectively) when $m_*<m\leq m_*+\delta_0$.
\end{enumerate}
In the regime where $m_*<m\leq m_*+\delta_0$, if $\kappa=\kappa_r +i\kappa_i$ is an eigenvalue of $\mathcal{L}_{U_{m,\gamma}}$, then $\kappa_r$ and $\kappa_i$ satisfy the following estimates
\begin{equation}\label{estimate for lambda r}
\begin{aligned}
   C^{-1}\gamma |m-m_*|\leq &|\kappa_r|\leq C\gamma |m-m_*|,\quad
   &|\kappa_i|\leq C\gamma |m-m_*|.
  \end{aligned}
\end{equation}
Here $C>1$ and $\delta_0$ are constants independent of $\gamma$.

\end{theorem}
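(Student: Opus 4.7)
The plan is to reduce the eigenvalue problem for $\mathcal{L}_{U_{m,\gamma}}$ to a family of Rayleigh boundary-value problems indexed by Fourier frequency, track a Schr\"odinger-type spectral quantity as $m$ varies to identify the threshold $m_*$, and perform a Lyapunov--Schmidt bifurcation near the critical point to produce the unstable eigenvalue pair. After Fourier expansion $\omega=e^{ikx}\phi(y)$ with $\phi=-(\partial_y^2-k^2)\psi$ and the substitution $c=-i\kappa/k$, the eigenvalue relation $\mathcal{L}_{U_{m,\gamma}}\omega=\kappa\omega$ becomes
\begin{equation}
-\psi''(y)+k^2\psi(y)+\frac{U_{m,\gamma}''(y)}{U_{m,\gamma}(y)-c}\,\psi(y)=0,\qquad \psi(\pm 1)=0.
\end{equation}
The assumptions $U(0)=U''(0)=0$ together with $\Gamma'(y)/y\in L^\infty$ (which forces $\Gamma'(0)=0$) give $U_{m,\gamma}(0)=U_{m,\gamma}''(0)=0$, so $V_m:=U_{m,\gamma}''/U_{m,\gamma}$ extends as a bounded real function across $y=0$, and the equation at $c=0$ is a regular self-adjoint Schr\"odinger eigenvalue problem $(-\partial_y^2+V_m)\psi=-k^2\psi$ on $H_0^1(-1,1)$.

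For \ref{Stability} and \ref{Neutral mode}, let $\mu_j(m,\gamma)$ denote the $j$-th Dirichlet eigenvalue of $-\partial_y^2+V_m$. Assumption \ref{Spectral} combined with the pointwise bound $-C\leq U''/U\leq 0$ forces $\mu_1(0,\gamma)>-1$ uniformly in $\gamma\in(0,\gamma_0]$. The sign condition $\Gamma'(y)/y<0$ together with $\int\Gamma'/y\,dy=-1$ makes the modification $V_m-V_0$ an attractive potential concentrated on the critical layer at $y=0$, so first-order perturbation theory gives $\partial_m\mu_1(m,\gamma)<0$ and a quantitative analysis shows that $\mu_1$ crosses $-1$ at some finite $m=m_*=m_*(\gamma)$. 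For $0\leq m<m_*$, positivity of $\mathcal{S}_{k,m,\gamma}:=-\partial_y^2+k^2+V_m$ rules out embedded eigenvalues at each $k\in\mathbb{Z}\setminus\{0\}$, the cases $|k|\geq 2$ being inactive by monotonicity in $k^2$, so the critical frequency is $k_*=1$. To exclude the non-embedded (complex-$c$) eigenvalues, combine continuity of isolated eigenvalues in $m$, their absence at $m=0$, and a Howard-type semicircle bound confining any hypothetical complex eigenvalue to a fixed compact set: such an eigenvalue would have to emerge from the continuous spectrum as $m$ increases, i.e. first become embedded, contradicting the previous step. This establishes \ref{Stability}, and \ref{Neutral mode} follows by the definition of $m_*$ with embedded eigenfunction $\Phi_*$ satisfying $\mathcal{S}_{1,m_*,\gamma}\Phi_*=0$.

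For \ref{Instability} and the sharp bound \eqref{estimate for lambda r}, I would perform a Lyapunov--Schmidt reduction at $(m,c)=(m_*,0)$. Writing the Rayleigh equation as $\mathcal{S}_{1,m_*,\gamma}\psi+(V_m-V_{m_*})\psi+cT_{c,m,\gamma}\psi=0$ with $T_{c,m,\gamma}:=U_{m,\gamma}''/[U_{m,\gamma}(U_{m,\gamma}-c)]$, decompose $\psi=\alpha\Phi_*+\eta$ with $\eta\perp\Phi_*$ and use the bounded invertibility of $\mathcal{S}_{1,m_*,\gamma}$ on $(\Phi_*)^\perp$ plus the implicit function theorem to solve $\eta=\eta(m,c)=O(|m-m_*|+|c|)$. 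The solvability condition $\langle\Phi_*,\cdot\rangle=0$ reduces to the scalar dispersion relation
\begin{equation}
\partial_m\mu_1(m_*,\gamma)(m-m_*)+\int_{-1}^{1}\frac{c\,U_{m_*,\gamma}''(y)\Phi_*(y)^{2}}{U_{m_*,\gamma}(y)\bigl(U_{m_*,\gamma}(y)-c\bigr)}\,dy+o(|m-m_*|+|c|)=0,
\end{equation}
where the singular integral must be interpreted via Sokhotski--Plemelj as $c$ approaches the real axis. The vanishing of $U_{m_*,\gamma}''$ at $y=0$ kills the $i\pi$-jump that would otherwise appear; a careful asymptotic analysis of the Cauchy transform, using the rescaling $y=\gamma\zeta$ to capture the critical-layer contribution, identifies the leading coefficient as $\asymp-\gamma^{-1}$ and pins the bifurcating branch to the imaginary axis $c\in i\mathbb{R}$ with $|c|\asymp\gamma(m-m_*)$. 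Returning to $\kappa=-ik_*c$ yields the real eigenvalue pair $\kappa_\pm\asymp\pm\gamma(m-m_*)$, one unstable and one stable, which proves \ref{Instability} together with the two-sided bound \eqref{estimate for lambda r}.

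The most delicate step will be the controlled asymptotic analysis of the dispersion integral in the critical-layer regime $c\to 0$: extracting the $\gamma^{-1}$ leading coefficient, proving that its real part vanishes so that the bifurcating eigenvalue $c$ lies on the imaginary axis (equivalently $\kappa$ on the real axis), and controlling the remainder uniformly in $\gamma\in(0,\gamma_0]$ to obtain the sharp two-sided estimate. This requires a precise description of $\Phi_*$ and of $V_{m_*}$ on the concentration scale $\gamma$ of $\Gamma'$, which is also where the $\gamma$-factor in \eqref{estimate for lambda r} is genuinely produced.
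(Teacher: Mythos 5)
Your proposal follows the paper's architecture almost exactly: $m_*$ is located by tracking the principal Dirichlet eigenvalue of $-\partial_y^2+U''_{m,\gamma}/U_{m,\gamma}$ as it crosses $-1$ (the paper does this via the explicit formula $\mathfrak{M}(\lambda)$ for the $\gamma=0$ limit, the comparison $|\lambda_{m,\gamma}-\lambda_{m,0}|\lesssim m\gamma$, and monotonicity of $\lambda_{m,\gamma}$ in $m$); stability for $m<m_*$ rests on the same mechanism you describe, that a complex eigenvalue can only be born through an embedded one, which the paper implements rigorously as an open--closed argument on the set of unstable $m$ using Rouch\'e's theorem for the Wronskian together with its one-sided boundary values $A\mp iB$; and the unstable pair for $m>m_*$ comes from a bifurcation off the neutral mode in which the $c$-derivative of the dispersion relation has size $\gamma^{-1}$ while the $m$-derivative is $O(1)$, yielding $|c|\approx\gamma|m-m_*|$. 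The only real divergence is in part \ref{Instability}: you package the bifurcation as a Lyapunov--Schmidt reduction with a scalar solvability condition, whereas the paper differentiates the modified Wronskian $W(m,\gamma,ic_i,-1)$ directly and proves $|\partial_{c_i}W|\approx\gamma^{-1}$, $|\partial_m W|\approx 1$ via explicit critical-layer estimates on $\phi_1,\phi_2$ (the ``good derivative'' $\partial_{c_i}+i\partial_y/U'_{m,\gamma}(0)$ being the key device) before invoking the implicit function theorem; these routes are equivalent, since vanishing of the Wronskian is precisely your solvability condition, and either way the substantive work is the uniform-in-$\gamma$ analysis of the Cauchy-type integrals that you correctly flag as the delicate step. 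One point you should make explicit in part \ref{Stability}: positivity of $-\partial_y^2+k^2+U''_{m,\gamma}/U_{m,\gamma}$ only excludes an embedded eigenvalue at $c=0$; for $c\neq 0$ in $\mathrm{Ran}\,U_{m,\gamma}$ one needs that $y=0$ is the unique inflection point of $U_{m,\gamma}$ (a consequence of $U''/U\le 0$ and $\Gamma'(y)/y<0$), so that the jump $B$ of the Wronskian across the real axis is nonzero there and no neutral mode with $c\neq 0$ can exist.
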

\begin{remark} Let us make a few remarks related to Theorem~\ref{linear}.
\begin{enumerate}[label=\normalfont{(\roman*)}]
    \item A direct application of the main result in \cite{WeiZhangZhao2015} tells us that the linear inviscid damping holds for the shear flows $(U_{m,\gamma},0)$ with $0\leq m<m_*$. 
    \item Since $\delta_0$ is independent of $\gamma$, the growing mode can reach the size $\approx \delta_0\gamma$ by increasing $m$, which also implies the lower bound of the semigroup for $m=m_*+\delta_0$
    \begin{equation*}
       \big\|e^{t\mathcal{L}_{U_{m,\gamma}}}\big\|_{L^2\to L^2}\geq C^{-1}e^{C^{-1}\delta_0\gamma t}.
    \end{equation*}
    \item We would like to highlight the implication of Theorem~\ref{linear} (c): there exists $\delta_1<\delta_0$ so that $m_*+\delta_1<m<m_*+\delta_0$, the shear flow $U_{m,\gamma}$ is also nonlinearly unstable. This follows from the result of Grenier \cite{Grenier2000}. Hence, in this regime, linear instability does imply nonlinear instability. We also refer to \cite{Lin2003, Vishik2003,deng2018long,DengZillinger2021} for recent instability results of shear flows. 
    \item  Although our work is done in the finite channel $\mathbb{T}_{2\pi} \times  [-1,1]$, it is indeed extendable to the infinite channel $\mathbb{T}_{2\pi} \times \mathbb{R}$ setting. This requires some digressions into some of the details in our analysis. We avoid to pursue it here. 
    \item The parameter $m$ in the Theorem~\ref{linear} is  taken to be near $m_*$. As seen in the statement, the emergence or disappearance of eigenvalues or embedded eigenvalue hinges  on the location of $m$ relative to $m_*$.  In the paper \cite{limasmoudizhao2022} the authors prove existence of growing modes for $m>M_0$, where $M_0$ is required to be sufficiently large. The result in the present work is different compared to theirs in which ours focuses more on the regime where $m>M_0$ and $m_*\ll M_0$. Additionally, here we work with a general monotonic background shear flow rather than the Couette flow. We also manage to obtain a precise description on how a growing mode is generated as the parameter $m$ varies. 
    
    It is interesting to follow the movement of the eigenvalue as $m$ increases and connect two regions discussed in \cite{limasmoudizhao2022} and the present work. This may require a global bifurcation argument. 
    \item We also introduce the general transition threshold problem we tackle for a family of linear operators which is closely related to the stability problem: 

    Let $T$ be a linear operator and $\{L_m\}_{m\in [a,b]}$ be a family of linear perturbation operators continuous in $m$ with $L_a=0$. Let $A\subset \mathbb{C}$ be the region that we are interested in. Suppose that at $m=a$,  the operator $T$ has no eigenvalue in $A$, namely, $\sigma_d(T)\cap A=\emptyset$ and at $m=b$, the operator $T+L_b$ has eigenvalue in $A$. One can then ask the following question : Determine the largest $m_1$ such that $\sigma_d(T+L_m)\cap A=\emptyset$ for $a\leq m<m_1$ and the smallest $m_2$ such that $\sigma_d(T+L_m)\cap A\neq \emptyset$ for $m_2 <m\leq b$! In spirit, this is essentially the question we are tackling here.
\end{enumerate}
\end{remark}
Next, we state our second result pertaining instability of $U_{m,\gamma}$ at the nonlinear level. We regard it as a quantitative description of the ``boundary'' between the linearly stable shear flows and linearly unstable shear flows. We record it below.
 \begin{theorem}[Nonlinear]\label{nonlinear}
Fix $N\in \mathbb{Z}^+ \cup \{0\}$. Consider $U_{m,\gamma} \in C^{2N+4}(-1,1)$ given by \eqref{perturbed shear} satisfying the assumptions in Theorem~\ref{linear} and the condition
\[
  U^{(2n)}_{m,\gamma} (0)=0, \text{ for } n=1,2,...,N, N+1.
\]
Let $\gamma_0$ and $m_*$ be the same as in Theorem~\ref{linear}. 
Then for all $0\leq\gamma<\gamma_0$ and $m$ satisfying 
\begin{align*}
    |m-m_*|\leq C \gamma^N,
\end{align*} 
there exists a nontrivial steady solution $(u(x,y),v(x,y))$ close to the background shear flow $(U_{m,\gamma}(y),0)$ in the following sense 
    \begin{equation}\label{norm of the distance background and nontrivial}
    \norm{(u(x,y),v(x,y))-(U_{m,\gamma}(y),0)}_{H^{5/2-\tau+N}(\mathbb{T}_{2\pi} \times (-1,1))}\leq \gamma^\tau,
    \end{equation}
for any $0<\tau<1$.
\end{theorem}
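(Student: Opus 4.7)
The plan is to apply a Crandall--Rabinowitz bifurcation argument to the steady 2D Euler equations, taking $m$ as the bifurcation parameter and using the neutral mode supplied by Theorem~\ref{linear}\,(b) at $m=m_*$ as the kernel direction. Steady solutions correspond to $-\Delta\psi=F(\psi)$ for some profile $F$; the background shear $(U_{m,\gamma},0)$ corresponds to $\psi=\Psi_{m,\gamma}(y):=\int_0^y U_{m,\gamma}(s)\,ds$ together with the profile $f_{m,\gamma}$ defined by $f_{m,\gamma}(\Psi_{m,\gamma}(y))=-U'_{m,\gamma}(y)$. Writing $\psi=\Psi_{m,\gamma}+\phi$ and retaining the same profile leads to the semilinear elliptic equation
\[
-\Delta\phi+\frac{U''_{m,\gamma}}{U_{m,\gamma}}\phi=\mathcal{N}(\phi,m),
\]
whose linearisation at $\phi=0$ is exactly the $c=0$ Rayleigh operator. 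Because $U_{m,\gamma}(0)=0$, the stream function $\Psi_{m,\gamma}$ has a non-degenerate minimum at $y=0$, so $f_{m,\gamma}$ is well defined only after a square-root change of variable. The vanishing hypothesis $U^{(2n)}_{m,\gamma}(0)=0$ for $n=1,\dots,N+1$ forces $\Psi_{m,\gamma}(y)-\tfrac12 U'_{m,\gamma}(0)y^2$ to be even in $y$ with $2N+2$ vanishing Taylor coefficients, so the map $\rho(y):=\mathrm{sgn}(y)\sqrt{2\Psi_{m,\gamma}(y)/U'_{m,\gamma}(0)}$ is a $C^{2N+3}$ diffeomorphism of $[-1,1]$ and $f_{m,\gamma}$ inherits enough regularity to place $\mathcal{N}$ in the scale $H^{5/2-\tau+N}$; this is how the parameter $N$ enters the regularity index in \eqref{norm of the distance background and nontrivial}.

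With these preparations I set $\mathcal{F}(\phi,m):=-\Delta\phi+(U''_{m,\gamma}/U_{m,\gamma})\phi-\mathcal{N}(\phi,m)$ on the $x$-even subspace of a Sobolev space built from the coordinate $\rho$, and verify the Crandall--Rabinowitz hypotheses at $(0,m_*)$: the kernel of $D_\phi\mathcal{F}(0,m_*)$ is the one-dimensional span of the neutral mode $\phi_*(x,y)=\varphi_*(y)\cos x$ from Theorem~\ref{linear}\,(b); the cokernel is also one-dimensional by the Fredholm alternative for the self-adjoint $c=0$ Rayleigh operator; and the transversality condition $D_mD_\phi\mathcal{F}(0,m_*)\phi_*\notin\mathrm{Range}\,D_\phi\mathcal{F}(0,m_*)$ follows from the eigenvalue motion \eqref{estimate for lambda r}, which shows that the relevant eigenvalue crosses through $0$ at rate $\sim\gamma$ as $m$ passes through $m_*$. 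Because $\mathcal{F}$ is only $C^1$ in $(\phi,m)$---higher $m$-derivatives fail owing to the limited regularity of $U_{m,\gamma}$ and the square-root inversion---I would replace the implicit-function step in Crandall--Rabinowitz by a direct contraction-mapping formulation of the Lyapunov--Schmidt reduction, which continues to produce a continuous branch of solutions under only $C^1$ regularity.

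The reduction yields a continuous branch $(\phi(\varepsilon),m(\varepsilon))$ through $(0,m_*)$ with $\phi(\varepsilon)=\varepsilon\phi_*+o(\varepsilon)$; inverting $\varepsilon\mapsto m(\varepsilon)$ gives a solution $\phi_m$ for each $m$ in a one- or two-sided neighbourhood of $m_*$ (depending on a parity check of the reduced equation). To convert this into the quantitative statement of the theorem, I would track the $\gamma$-dependence of every constant: the vanishing of the first $N+1$ even derivatives of $U_{m,\gamma}$ at $0$ suppresses subprincipal coefficients by a factor of $\gamma^N$, so the branch sweeps out exactly the window $|m-m_*|\le C\gamma^N$, while balancing the $H^{5/2-\tau+N}$ norms of $\phi_*$ and $\mathcal{N}$ against the $\gamma$-sized transversality constant yields the amplitude bound $\|\phi\|_{H^{5/2-\tau+N}}\le\gamma^\tau$. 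I expect the main obstacle to be precisely this quantitative bookkeeping combined with running Crandall--Rabinowitz under only $C^1$ regularity: combining $\gamma$-uniform elliptic and product estimates in the streamline coordinate $\rho$ with the Fredholm inverse of the $c=0$ Rayleigh operator and the lower bound in \eqref{estimate for lambda r}, so that the abstract bifurcation branch translates into the quantitative width $\gamma^N$ and amplitude $\gamma^\tau$ asserted in \eqref{norm of the distance background and nontrivial}.
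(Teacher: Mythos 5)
Your overall framework (Crandall--Rabinowitz applied to the semilinear equation $\Delta\psi=\widetilde G(\psi)$, with the vanishing even derivatives of $U_{m,\gamma}$ at $y=0$ used to secure $C^{N+1}$ regularity of the profile map) matches the paper, and your square-root streamline coordinate is an acceptable substitute for the paper's odd reflection $\widetilde\psi_0$ in Lemma~\ref{regularity lemma}. However, there are two genuine gaps. First, the paper does \emph{not} bifurcate in $m$: it fixes the shear at a well-chosen $m^{\circ}$ and bifurcates in the rescaled wavenumber $k^{2}$ (Lemma~\ref{Bifurcation}), then tunes $m^{\circ}$ so that the branch passes through $k=\pm1$ and hence lives on $\mathbb{T}_{2\pi}$. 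Your branch $(\phi(\varepsilon),m(\varepsilon))$ through $(0,m_*)$ is only guaranteed locally, and generically (pitchfork, $m(\varepsilon)=m_*+a\varepsilon^{2}+\dots$) it covers only one side of $m_*$, with an $m$-range controlled by an unquantified $\varepsilon_0$; it therefore cannot deliver a solution ``for all $m$ with $|m-m_*|\le C\gamma^{N}$'' on both sides, which is what the theorem asserts. Second, and more importantly, you have misidentified where the window $|m-m_*|\le C\gamma^{N}$ and the bound $\gamma^{\tau}$ come from. The paper constructs a \emph{single} nontrivial solution near $(U_{m^{\circ},\gamma},0)$ and then observes that
\[
U_{m,\gamma}-U_{m^{\circ},\gamma}=(m-m^{\circ})\,\gamma^{2}\widetilde\Gamma(y/\gamma),
\qquad
\big\|\gamma^{2}\widetilde\Gamma(\cdot/\gamma)\big\|_{\dot H^{s}}\approx\gamma^{5/2-s},
\]
so that for $s=5/2-\tau+N$ one gets $\|U_{m,\gamma}-U_{m^{\circ},\gamma}\|_{H^{5/2-\tau+N}}\lesssim|m-m^{\circ}|\,\gamma^{\tau-N}\lesssim\gamma^{\tau}$ exactly when $|m-m_*|\lesssim\gamma^{N}$; the estimate \eqref{norm of the distance background and nontrivial} then follows by the triangle inequality, with the same solution serving every $m$ in the window. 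Your explanation (``suppression of subprincipal coefficients by $\gamma^{N}$'' and ``balancing norms against the $\gamma$-sized transversality constant'') does not substitute for this computation, and without it neither the exponent $N$ in the window nor the exponent $\tau$ in the conclusion is derived.
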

\begin{remark}
    Let us make a few remarks regarding our results in Theorem \ref{nonlinear}:
    \begin{itemize}
        \item The authors \cite{LinZeng2011} consider the Couette flow. Their results coincide with ours precisely when we set $N=0$, $U(y)=y$, and $\Gamma(y)=2e^{-y^2}/\sqrt{\pi}$. They proved that the stable Couette flow becomes unstable when the perturbation is taken in weaker topology, namely in the Sobolev space $H^{5/2-}$. In the present work, we verify that at the nonlinear level, the parameter $m$ also plays a key role in causing such phenomenon. Concretely, it is related to the regularity of the function space where the perturbation that leads to asymptotic instability is taken: when $m$ is chosen even closer to $m_*$, one can take the perturbation in a smoother Sobolev space.   
        \item We refer to \cite{LinWeiZhangZhu2022,wang2022dynamics,castro2023traveling,franzoi2023space} for recent works on the nontrivial stationary solutions near Couette flow and to \cite{coti2023stationary} for the stationary structures near the Kolmogorov and Poiseuille flows. 
        \item We also mention some results on the 2-D water wave problem with constant vorticity \cite{Berti2021,sinambela2022existence,haziot2022traveling} and references therein.
    \end{itemize}
    
\end{remark}

%%NOTATIONS and CONVENTIONS
Before stating the idea of our proofs, we would like to introduce notations and conventions that are used frequently throughout the rest of the paper. We record them in the following subsection. 
\subsection{Notations and conventions} 
We use symbols $'$ and $''$ to denote the first and second derivative in $y$, respectively. Additionally, we will use $f^{(n)}(y)$ to mean the $n$-th derivative of the function $f$ with respect to $y$. Due to a number of parameters and variables involved in our analysis, we would like to point out conventions we have followed throughout the paper. For eigenfunctions, solutions of differential equation, or any other functions used in our analysis, in general, they will depend on the variables/parameters: $y,m,\gamma,c,\lambda$. The function and its complete (ordered) argument appear as $\phi(y,m,\gamma,c,\lambda)$. 

However, we warn readers that the argument in the function may change depending on the context. The reason for this is to keep the notation simpler and compact when possible. More precisely, if a priori we know that one (or more) of the variables/parameters in the argument is set to $0$, then we suppress the dependency of that variable(s)/parameter(s) in the argument of the function. For instance, let us say that we fix $\gamma=0$, then the function $\phi$ together with its argument read $\phi(y,m,c,\lambda)$. Additionally, if, on top of $\gamma=0$, we fix another variable: $c=0$, then the function and its argument read $\phi(y,m,\lambda)$. If the variable(s)/parameter(s) is fixed to a specific value other than $0$, then we shall display the value on the fixed variable(s)/parameter(s) in the function. 

In the present work, the constants in our computations will be denoted in various ways. We use $C, c_0, C_0$ and $C_1$ to mean some generic positive constants. In general, we do not display the parameter on which they depend on. However, there are a few parts in our analysis where we explicitly state the variable dependency in the constant.

Throughout our estimates, we will frequently use the notations $``\lesssim", ``\gtrsim", ``\approx"$. For arbitrary functions $f$ and $g$, the inequality $f \lesssim g$ means that there exists a constant C such that $f \leq C g$. Similarly, $f \gtrsim g$  means that there exists $C$ such that $f \geq C g$. Lastly, the notation $f \approx g$ is equivalent to the statement that $C^{-1}g \leq f \leq C g$, for some constant $C$. 

\subsection{Idea of the proofs}
In this subsection, we outline key ideas and mathematical machinery used in our proofs. We fix the two parameters $m,\gamma$ at the outset. Concerning Theorem~\ref{linear}, we will work for the most part with the linearized equation in terms of the vorticity \eqref{Linearized Euler Equations} or stream function \eqref{linearized equation in terms of streamfunction}. When the background shear $(U,0)$ is replaced by $(U_{m,\gamma},0)$, followed by taking the Fourier transform in $x$ of \eqref{Linearized Euler Equations}, we obtain
\[
\dfrac{-1}{ik}\partial_t \hat{\omega}=\mathcal{R}_{m,\gamma,k} \hat{\omega},
\] where
$\mathcal{R}_{m,\gamma,k}=(U_{m,\gamma}(y)-U''_{m,\gamma}(y)(\partial_y^2-k^2)^{-1})$.
If $c=c_r+ic_i \in \mathbb{C}$ is an eigenvalue of the operator $\mathcal{R}_{m,\gamma,k}$, then there exists a function $\Omega(y,m,\gamma,c,k) \in L^2(-1,1)$ such that 
\begin{equation}\label{operator R}
   \mathcal{R}_{m,\gamma,k}\Omega(y,m,\gamma,c,k)  =c \Omega(y,m,\gamma,c,k),
\end{equation}
where
\[
\Delta \Phi(y,m,\gamma,c,k)=\Omega(y,m,\gamma,c,k), \qquad \Phi(\pm1,m,\gamma,c,k)=0.
\]
 This leads us to the Rayleigh equation
\begin{equation}\label{Rayleigh Equation}
    -\partial^2_y \Phi(y,m,\gamma,c,k)+\dfrac{U''_{m,\gamma}}{U_{m,\gamma}-c}\Phi(y,m,\gamma,c,k)=-k^2\Phi(y,m,\gamma,c,k),
\end{equation} 
with $\Phi(-1,m,\gamma,c,k)=\Phi(1,m,\gamma,c,k)=0$. The homogeneous Rayleigh equation is well studied in \cite{WeiZhangZhao2015, limasmoudizhao2022} for a fixed shear flow. In this paper, the shear flow is changing as the parameters $m$ and $\gamma$ vary. For this reason, we derive some uniform estimates related to the solutions of \eqref{Rayleigh Equation} on these parameters. See Lemma~\ref{phi1 phi2} in Appendix~\ref{Related Results}. 
%We regard Appendix~\ref{Related Results} as a black box of estimates in our overall analysis.

Our main strategy is to study the eigenvalues of the Rayleigh operator \eqref{operator R} which is, in some sense, equivalent to studying the eigenvalue of the Schr\"{o}dinger-type operator found in \eqref{Rayleigh Equation}. We provide Remarks~\eqref{connection between c and k^2} and \eqref{TFAE statement} to highlight connections between the eigenvalues of the Rayleigh operator \eqref{linearized operator}, $\mathcal{R}_{m,\gamma,k}$ \eqref{operator R} and the Schr\"{o}dinger-type operator in \eqref{Rayleigh Equation}. 

Understanding the eigenvalue problem for Schr\"{o}dinger-type operator \eqref{Rayleigh Equation} for $c=0$ plays an undoubtedly pivotal role in studying the transition threshold problem considered here. For convenience, the eigenvalue problem of the Schr\"{o}dinger-type operator for $c=0$ is recast alternatively as
\begin{equation}
    \mathcal{H}_{m,\gamma}\Phi=-k^2 \Phi,
\end{equation}
where $ \mathcal{H}_{m,\gamma}$ is given by
\begin{equation}\label{operator H}
     \mathcal{H}_{m,\gamma}:=-\partial^2_y+Q_{m,\gamma}(y),
\end{equation}
 with
 \begin{equation}\label{Q}
 Q_{m,\gamma}(y):=\dfrac{U''_{m,\gamma}(y)}{U_{m,\gamma}(y)}.
 \end{equation}
  \begin{remark}\label{connection between c and k^2}We would like to direct readers to the following important points regarding the connection between eigenvalues of $\mathcal{R}_{m,\gamma,k}$ and $\mathcal{H}_{m,\gamma}$:
     \begin{itemize}
         \item In light of equation \eqref{operator R}, if $c=0$ is an embedded eigenvalue of the operator $\mathcal{R}_{m,\gamma,k}$, then $-k^2$ is an eigenvalue of the operator $\mathcal{H}_{m,\gamma}$ with $-1\geq -k^2 \geq \lambda_{m,\gamma}$.
         \item Moreover, if $\lambda_{m,\gamma}\geq -1$, then $-k^2$ with $k\in \mathbb{Z}$ cannnot be an eigenvalue of $\mathcal{H}_{m,\gamma}$ which implies that $c=0$ is not an embedded eigenvalue of the operator $\mathcal{R}_{m,\gamma,k}$.
     \end{itemize}
 \end{remark}
\subsubsection{Determining $m_*$}
To determine $m_*$, we study the Schr\"{o}dinger-type  operator $  \mathcal{H}_{m,\gamma}$. 
Let $\lambda_{m,\gamma}$ be the minimal eigenvalue associated with $ \mathcal{H}_{m,\gamma}$, then 
\begin{equation}\label{lambda m gamma}
    \lambda_{m,\gamma}= \min_{\substack{\lVert \Psi\rVert_{L^2}=1 \\ \Psi \in H^1_0}}(\mathcal{H}_{m,\gamma} \Psi,\Psi)=\min_{\substack{\lVert \Psi\rVert_{L^2}=1 \\ \Psi \in H^1_0}}\int_{-1}^{-1} |\partial_y\Psi|^2+\int_{-1}^1 Q_{m,\gamma}(y)|\Psi|^2\;dy.
\end{equation}
 We write the associated quadratic form as
\begin{equation}\label{quadratic form 2}
 H_{m,\gamma} \Psi:=\lVert \Psi' \rVert_{L^2}^2 + \int_{-1}^{1} Q_{m,\gamma}(y) |\Psi|^2 \; dy.
 \end{equation}
 Let $\Psi(y,m,\gamma,c,\lambda_{m,\gamma})$ be the eigenfuntion associated with the minimal eigenvalue $\lambda_{m,\gamma}$. From \eqref{lambda m gamma}, one can see that the function $\Psi(y,m,\gamma,\lambda_{m,\gamma})$ solves the Rayleigh equation \eqref{Rayleigh Equation} along with the boundary conditions for $c=0$ and $-k^2$ being replaced by $\lambda_{m,\gamma}$.

Furthermore, as $\gamma \to 0$, we obtain 
\[
(\mathcal{H}_{m,\gamma} \Psi,\Psi)\to (\mathcal{H}_{m,0} \Psi,\Psi):= \int_{-1}^{-1} |\partial_y\Psi|^2+\int_{-1}^{1}\dfrac{U''}{U}|\Psi|^2\;dy-\dfrac{m|\Psi(0)|^2}{U'(0)}.
\]
Similarly, let $\lambda_{m,0}$ be the minimal eigenvalue associated with $ \mathcal{H}_{m,0}$, then 
\[
\lambda_{m,0}= \min_{\substack{\lVert \Psi\rVert_{L^2}=1 \\ \Psi \in H^1_0}}(\mathcal{H}_{m,0} \Psi,\Psi).
\]
Additionally, the corresponding quadratic form is given by 
\begin{equation}\label{quadratic form 1}
 H_{m,0} \Psi:=\lVert \Psi' \rVert_{L^2}^2 + \int_{-1}^{1} \dfrac{U''}{U} |\Psi|^2 \; dy-\dfrac{m |\Psi(0)|^2}{U'(0)}.
 \end{equation} 
 Hence, the eigenfunction $\Psi(y,m,\gamma,\lambda_{m,0})$ associated with the minimal eigenvalue $\lambda_{m,0}$ is the solution of the associated Euler–Lagrange equation 
\begin{equation}\label{eq:Euler-Lagrange}
    -\partial^2_y \Psi +\dfrac{U''(y)}{U(y)}\Psi-\dfrac{m\Psi(0)\delta(0)}{U'(0)}=\lambda_{m,0}\Psi,\quad \Psi(y=\pm1)=0.
\end{equation}

In Lemma~\ref{Monotonicity}, we obtain an explicit formula given by  the function $\mathfrak{M}$ which reveals the relationship  between $m$ and $\lambda$ so that \eqref{eq:Euler-Lagrange} admits a nontrivial solution. Moreover, we show that $\mathfrak{M}$ is monotonic. Additionally, we prove that $\lambda_{m,\gamma}$ of $\mathcal{H}_{m,\gamma}$ is also monotonic with respect to the parameter $m$, see Lemma~\ref{monotonicity of lambda m gamma}. As a result, we obtain an inequality that connects the two minimal  eigenvalues
\[
|\lambda_{m,\gamma}-\lambda_{m,0}|\lesssim m \gamma. 
\] 
It will be clear later that from this inequality, for any fixed $\gamma$ small, one can determine $m_*$ such that $\lambda_{m_*,\gamma}=-1$.

\subsubsection{Stability for $m<m_*$} Since the eigenvalue problem \eqref{Rayleigh Equation} is an ODE (Ordinary Differential Equation) problem, it is natural to introduce the associated Wronskian. We recall that the Wronskian $\mathcal{W}$ of two solutions $f$ and $g$ to a second order ODE is given by 
\[
\mathcal{W}[f,g]=fg'-f'g.
\] If $\mathcal{W}[f,g]=0$, then $f$ and $g$ are linearly dependent, i.e. $f=Cg$, for some constant $C$. Let $\varphi^{-}(y,m,\gamma,c,\lambda)$ and $\varphi^{+}(y,m,\gamma,c,\lambda)$ be two solutions of the Rayleigh equation \eqref{Rayleigh Equation} with $-k^2=\lambda$ satisfying the conditions:
\begin{equation}\label{boundary conditions for varphi}
    \varphi^{-}(-1,m,\gamma,c,\lambda)=0=\varphi^{+}(1,m,\gamma,c,\lambda)\text{ and }(\varphi^{-})'(-1,m,\gamma,c,\lambda)=1=(\varphi^{+})'(1,m,\gamma,c,\lambda).
\end{equation}
Direct computation yields the Wronskian of these two solutions is $y-$independent, namely
\[
\partial_y \mathcal{W}[\varphi^{-},\varphi^{+}](y,m,\gamma,c,\lambda)=\partial_y\bigg(\varphi^{-}(\varphi^{+})'-(\varphi^{-})'\varphi^{+}\bigg)=0.
\]

As a result, upon computing the Wronskian of $\varphi^{-}$ and $\varphi^{+}$ and using the boundary conditions of $\varphi^{+}$ and $(\varphi^{+})'$ at $y=1$, we can infer 
\begin{equation}\label{wronskian of generic varphi}
   \mathcal{W}[\varphi^{-},\varphi^{+}](m,\gamma,c,\lambda)=\varphi^{-}(1,m,\gamma,c,\lambda). 
\end{equation}

By far, we have introduced three operators, eigenvalue problem associated with each of them and the notion of Wronskian of two solutions $\varphi^-$ and $\varphi^+$. To clear some things up and highlight the connections between them, we present the following remark:
\begin{remark}\label{TFAE statement}
    For any $k\in \mathbb{Z}\setminus\{0\}$. The following statements are equivalent for $c\notin \mathbb{R}$:
    \begin{itemize}
        \item $\kappa=ikc$ is an eigenvalue of the Rayleigh operator $\mathcal{L}_{U_{m,\gamma}}$;
        \item $c$ is an eigenvalue of the operator $\mathcal{R}_{m,\gamma,k}$;
        \item There is a nontrivial solution $\Phi$ solving the Rayleigh equation \eqref{Rayleigh Equation};
        \item $-k^2$ is an eigenvalue of the Schr\"{o}dinger-type operator $-\partial^2_{y}+\frac{U''_{m,\gamma}}{U_{m,\gamma}-c}$;
         \item $c$ is a zero of $\mathcal{W}[\varphi_1,\varphi_2](m,\gamma,c,\lambda)$;
        \item $\varphi^{-}(1,m,\gamma,c,\lambda)=0$.
    \end{itemize}
\end{remark}
It is well-known that due the Howard’s semicircle theorem \cite{Howard1961}, the eigenvalues $c$ of the operator $\mathcal{R}_{m,\gamma,k}$ belong to a semicircle on the complex plane. In light of that, we introduce the following two sets:
\begin{equation}\label{sets}
\begin{aligned}
    &\mathfrak{B}:=\Bigl\{c=c_r+ic_i:\bigg(c_r-\dfrac{U_{m,\gamma}(-1)+U_{m,\gamma}(1)}{2}\bigg)^2+(c_i)^2 \leq \dfrac{\big(U_{m,\gamma}(1)-U_{m,\gamma}(-1)\big)^2}{2}\Bigr\}\subset \mathbb{C},\\
    &\mathfrak{D}_{\epsilon_0}:=\Bigl\{c=c_r+ic_i: c_r \in \text{Ran}(U_{m,\gamma})\text{ and } 0\leq |c_i|<\epsilon_0 \Bigr\}\subset \mathbb{C}.
    \end{aligned}
\end{equation} 
In the aforementioned paper, the theorem essentially asserts that for $c\notin \mathfrak{B}$, $\mathcal{W}[\varphi_1,\varphi_2](m,\gamma,c,\lambda)\neq 0$. It is also easy to check that for $|\lambda|>C_0$ with $C_0>0$ large enough, $\mathcal{W}[\varphi_1,\varphi_2](m,\gamma,c,\lambda)\neq 0$ for all $c$. Thus we only focus on the case $\big\{(c,\lambda):~|\lambda|\leq C_0,\ c\in \mathfrak{B}\big\}$ which is compact. Moreover, we have an explicit formula for the Wronskian when $c\in  \mathfrak{D}_{\epsilon_0} \setminus \text{Ran}(U_{m,\gamma})$, see equation \eqref{wronskian of varphi}.

To prove the stability for the case $m<m_*$, we apply the Rouche’s theorem from complex analysis and use a contradiction argument. We first show that the Rayleigh operator has no embedded eigenvalue. Secondly, we show the nonexistence of eigenvalue $c\in \mathfrak{B}$ with $c_i \neq 0$. The argument goes as follows: if there is an eigenvalue in $\mathfrak{B}$ with $c_i\neq0$ for $m<m_*$, then such eigenvalue should be generated by an embedded eigenvalue. However, since there is no embedded eigenvalue hence there is no such eigenvalue with $c_i \neq 0$. The nonexistence of eigenvalue is equivalent to the nonexistence of growing mode in the regime $m<m_*$.

\subsubsection{Instability for $m>m_*$} To show the existence of growing mode, we study the zeros of the Wronskian $\mathcal{W}[\varphi^{-},\varphi^{+}](m,\gamma, c, -1)$. We prove that as $m$ getting large slightly away from $m_*$, the eigenvalue $c$ moves away from zero, namely, there exists $c:=c_r(m)+ic_i(m)$ such that $c(m_*)=0$ and 
$$\mathcal{W}[\varphi^{-},\varphi^{+}](m,\gamma, c_r+ic_i,-1)=0,$$
for $m_*\leq m<m_*+\delta_0$. This is done by studying the partial derivatives of the modified Wronskian $W(m,\gamma,c_r+ ic_i,-1)$ with respect to $c_i$, $m,$ and applying an ODE-type argument.

\subsubsection{Nonlinear asymptotic instability for $m$ near $m_*$}
To prove nonlinear asymptotic instability, our argument relies on the Crandall--Rabinowitz local bifurcation theory. Using the observation that any stream function that solves $\Delta \psi = \widetilde G \circ \psi$ gives rise to a steady solution of the nonlinear problem \eqref{Euler Equations}, we apply the bifurcation theory to the Poisson problem. Due to the lack of evenness in our background stream function $\psi_0$, we work with the modified $\psi_0$ given by $\widetilde \psi_0$. The map $\widetilde G$ is given by the  modified background stream function via $\widetilde G(\widetilde\psi_0)=U'(y)$.  Such map is well-defined at $y=0$. However, it is not obvious that its higher derivatives are still continuous at $y=0$. Thanks to the vanishing assumptions on even derivatives of $U$ at $y=0$, we show that $\widetilde G$ is sufficiently smooth for all values of $y$. This result is recorded in Lemma~\ref{regularity lemma}. Upon checking all the assumptions to apply the local bifurcation theory, we deduce the existence of local curve of solutions bifurcating from the background shear $(U,0)$.

For fixed $\gamma \in (0,\gamma_0)$, one can choose  $m_1$ and $m_2$ close enough to $m_*$ so that $\lambda_{m_1,\gamma}<-1<\lambda_{m_2,\gamma}$. We then pick $m^\circ \in (m_1,m_2)$. Instead of bifurcating from the generic background shear $(U,0)$, we bifurcate our local curve of solutions from  $(U_{m^\circ,\gamma},0)$. The $m^\circ$ is chosen such that we obtain a local curve of solutions that exhibits a nontrivial (non-sheared) steady solution with $k=1$. By doing this, we have confirmed an existence of a nontrivial steady solution that is $2 \pi$-periodic in $x$ which equivalently proves the nonlinear asymptotic instability. 
\section{Preliminary}\label{Preliminary}

\subsection{Plan of the article}
 Let us now outline the general structure of the paper. In Section~\ref{Preliminary}, we record some estimates concerning distance between eigenvalues and monotonicity properties. As will be seen later, these estimates are crucial in proving our two main results. In the same section, we have provided some more discussions on the Wronskian. 

The main results of the present work are then proved in Section~\ref{proof of main results}. The section is started with proof Theorem~\ref{linear}. It is partitioned into into four parts, each devoted to \ref{Stability}, \ref{Neutral mode}, \ref{Instability} and the estimate in \eqref{estimate for lambda r}. Following that is the proof of Theorem~\ref{nonlinear}. As an intermediate step, we have included a lemma on a bifurcation result that is important to close our argument in Theorem~\ref{nonlinear}. 

Finally, we have included two appendices: Appendix~\ref{Related Results} and Appendix~\ref{Regularity}. The first appendix contains some estimates concerning  functions in the expression of the regular solution of the Rayleigh equation. The second appendix is devoted to discuss the regularity of the the map $\widetilde{G}$.

\subsection{Estimates} The following subsection is aimed to present important estimates on the location of eigenvalues and monotonicity results that are fundamental in our analysis. The whole idea hinges on the careful study of the Rayleigh equation \eqref{Rayleigh Equation} and an ODE-type of argument. In particular, at $c=0$, we manage to understand how eigenvalue transitions as we fix one parameter and vary the rest. 
The Rayleigh equation \eqref{Rayleigh Equation} has rather a delicate limiting structure as the parameter $\gamma \to 0$. To this end, we show the convergence of the the Rayleigh equation \eqref{Rayleigh Equation}. In light of our interest in steady solutions, taking $c=0$, we obtain 
\begin{equation}\label{Rayleigh equation at c=0}
    -\partial^2_y \Phi(y,m,\gamma,\lambda)+Q_{m,\gamma}\Phi(y,m,\gamma,\lambda)=\lambda \Phi(y,m,\gamma,\lambda),
\end{equation}
where $Q(y,m,\gamma,\lambda)$ is as in \eqref{Q}. Explicity, it takes the form
\[
  Q(y,m,\gamma,\lambda)= \dfrac{U''(y)}{U_{m,\gamma}(y)}+\dfrac{m\Gamma'(y/\gamma)}{\gamma(y/\gamma)}\dfrac{1}{\frac{U(y)}{y}+\gamma m \widetilde{\Gamma}(y/\gamma)/(y/\gamma)}.
 \] 
 
It is easy to check that $\Phi(y,m,\gamma, \lambda)$ solves \eqref{Rayleigh equation at c=0}, if and only if the following identity
\begin{equation}\label{weak formulation}
    -\int_{-1}^1\Phi(y,m,\gamma, \lambda)\partial_{yy}\varphi(y)
    +Q_{m,\gamma}\Phi(y,m,\gamma, \lambda)\varphi(y)\;dy=\lambda\int_{-1}^1\Phi(y,m,\gamma, \lambda)\varphi(y)\;dy
\end{equation}
holds for any test function $\varphi\in H_0^1(-1,1)$.
Taking $\gamma \to 0$ leads to
\[
 Q(y,m,\gamma,\lambda)\to \dfrac{U''(y)}{U(y)}-\dfrac{m\delta(0)}{U'(0)}. 
 \]
Hence, for all test functions $\varphi \in H^1_0(-1,1)$ we have 
\begin{equation}\label{Rayleigh equation at gamma 0}
   \int_{-1}^1 -\Phi(y,m,\lambda)\partial^2_y \varphi(y)  + \dfrac{U''(y)}{U(y)} \Phi(y,m,\lambda)\varphi(y)-\lambda  \Phi(y,m,\lambda)\varphi(y) dy =\dfrac{m\Phi (0,m,\lambda)\varphi(0)}{U'(0)},
\end{equation}
 In other words, when $\gamma=0$, the Rayleigh equation becomes a second order differential equation, namely
 \begin{equation}\label{second order DE}
     \partial^2_y \Phi(y,m,\lambda) - \dfrac{U''(y)}{U(y)}\Phi(y,m,\lambda)+\lambda\Phi(y,m,\lambda)=-\dfrac{m}{U'(0)} \Phi(0,m,\lambda)\delta(0).
 \end{equation}
 
The following lemma records a sufficient and necessary condition for the differential equation in \eqref{second order DE} to admit a nontrivial solution. Moreover, it also asserts that  $m$ which is proved to be $\lambda$-dependent satisfies some monotonicity condition with respect to $\lambda$. 
\begin{lemma}[Monotonicity of $m$]\label{Monotonicity} The differential equation \eqref{second order DE} admits a nontrivial solution for  fixed $m$ with $\lambda<0$ if and only if $m=\mathfrak{M}(\lambda)$, for some function $\mathfrak{M}$. Moreover, $m$ is strictly decreasing with respect to $\lambda$, 
\begin{equation}
    \partial_{\lambda}\mathfrak{M}<0.
\end{equation}
\end{lemma}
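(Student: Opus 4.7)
The plan is to construct $\mathfrak{M}(\lambda)$ explicitly as a ratio of logarithmic derivatives of two ``shooting'' solutions, and then differentiate in $\lambda$ using a Wronskian identity of Sturm--Liouville type. Fix $\lambda<0$ and consider, on each subinterval, the homogeneous equation
\begin{equation}
\partial_y^2\phi - \frac{U''(y)}{U(y)}\phi + \lambda\phi = 0.
\end{equation}
Let $\phi_-(y,\lambda)$ be the solution on $(-1,0]$ determined by $\phi_-(-1,\lambda)=0$, $\phi_-'(-1,\lambda)=1$, and let $\phi_+(y,\lambda)$ be the solution on $[0,1)$ determined by $\phi_+(1,\lambda)=0$, $\phi_+'(1,\lambda)=-1$. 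Any weak solution of \eqref{second order DE} must equal $A\phi_-$ on $(-1,0)$ and $B\phi_+$ on $(0,1)$. Continuity of $\Phi$ at $y=0$ and the jump condition induced by the delta source (obtained by integrating \eqref{second order DE} across $0$) read $A\phi_-(0,\lambda)=B\phi_+(0,\lambda)$ and
\begin{equation}
B\phi_+'(0,\lambda) - A\phi_-'(0,\lambda) = -\frac{m}{U'(0)}\,A\phi_-(0,\lambda).
\end{equation}
Provided $\phi_\pm(0,\lambda)\neq 0$, a nontrivial pair $(A,B)$ exists if and only if
\begin{equation}
m = \mathfrak{M}(\lambda) := -\,U'(0)\left[\frac{\phi_+'(0,\lambda)}{\phi_+(0,\lambda)} - \frac{\phi_-'(0,\lambda)}{\phi_-(0,\lambda)}\right].
\end{equation}

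For the monotonicity, write $\dot\phi_\pm := \partial_\lambda\phi_\pm$; differentiating the ODE gives
\begin{equation}
\partial_y^2\dot\phi_\pm - \frac{U''}{U}\dot\phi_\pm + \lambda\dot\phi_\pm = -\phi_\pm.
\end{equation}
Multiplying the $\phi_\pm$ equation by $\dot\phi_\pm$, the $\dot\phi_\pm$ equation by $\phi_\pm$, and subtracting yields $\partial_y(\phi_\pm \dot\phi_\pm' - \phi_\pm' \dot\phi_\pm) = -\phi_\pm^2$. Since $\phi_-(-1,\lambda)=0$ for all $\lambda$, differentiation gives $\dot\phi_-(-1,\lambda)=0$, and integrating from $-1$ to $0$ produces
\begin{equation}
\phi_-(0)\dot\phi_-'(0) - \phi_-'(0)\dot\phi_-(0) = -\int_{-1}^0 \phi_-(y,\lambda)^2\,dy.
\end{equation}
Dividing by $\phi_-(0)^2$ shows $\partial_\lambda\bigl(\phi_-'(0)/\phi_-(0)\bigr)<0$. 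The analogous computation on $(0,1)$, using $\dot\phi_+(1,\lambda)=0$, gives $\partial_\lambda\bigl(\phi_+'(0)/\phi_+(0)\bigr)>0$. Combining these with $U'(0)>0$ (from \ref{Monotone}) yields $\partial_\lambda \mathfrak{M}(\lambda) < 0$, as desired.

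The main technical obstacle is ensuring the ratios $\phi_\pm'(0)/\phi_\pm(0)$ are well defined, i.e.\ that $\phi_\pm(0,\lambda)\neq 0$. If, say, $\phi_-(0,\lambda)=0$, then $\lambda$ is a Dirichlet eigenvalue on $(-1,0)$ of the unperturbed Schr\"odinger operator $-\partial_y^2+U''/U$. The sign condition $-C\leq U''/U\leq 0$ together with the spectral hypothesis \ref{Spectral} excludes such degeneracies in the range of $\lambda$ we care about, and one can handle the removable set of exceptional $\lambda$ either by continuous extension of $\mathfrak{M}$ (re-parametrizing by $B\phi_+(0)=A\phi_-(0)$ and allowing one of $A,B$ to vanish) or simply by restricting to the complement where the formula is directly meaningful; in either case the monotonicity of $\mathfrak{M}$ is preserved, and this is what will be used in the sequel.
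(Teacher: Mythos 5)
Your proof is correct in substance but follows a genuinely different route from the paper. The paper constructs the solution explicitly from the regular solution $U(y)\phi_1(y,\lambda)$ of Lemma~\ref{phi1 phi2} via reduction of order, reads off the jump condition at $y=0$, and arrives at the closed-form expression \eqref{expression of m}, $\mathfrak{M}(\lambda)=-(U'(0))^2\int_{-1}^{1}U^{-2}\bigl(\phi_1^{-2}-1\bigr)\,dy$; monotonicity is then obtained by differentiating under the integral and invoking the sign and size of $\partial_\lambda\phi_1/\phi_1$ from Lemma~\ref{F}. You instead set up a shooting/matching problem at $y=0$ and express $\mathfrak{M}$ as a difference of logarithmic derivatives, proving monotonicity by the classical Sturm--Liouville identity $\partial_y(\phi\,\dot\phi'-\phi'\dot\phi)=-\phi^2$ (i.e.\ monotonicity of the Weyl-type $m$-functions from each side). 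Your argument is more elementary and self-contained for the lemma as stated; what it does not produce is the explicit integral formula \eqref{expression of m}, which the paper uses repeatedly afterwards (to locate $m_0$ with $\lambda_{m_0,0}=-1$ in the proof of Theorem~\ref{linear}\ref{Neutral mode}, and to get the quantitative bound $|\partial_\lambda\mathfrak{M}|\approx 1-e^{-2\sqrt{-\lambda}}$ in Lemma~\ref{F}), so in the context of the whole paper the explicit construction is not dispensable.

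One point needs a better justification: the non-vanishing $\phi_\pm(0,\lambda)\neq 0$. The spectral hypothesis \ref{Spectral} concerns the Rayleigh operator and is not the relevant input here, and the crude bound $-C\le U''/U\le 0$ alone only gives that the Dirichlet spectrum on a half-interval is $\ge \pi^2-C$, which need not be positive. The correct (and short) argument is that $U$ itself is a zero-energy solution of $-\phi''+(U''/U)\phi=0$ which is sign-definite on each open half-interval $(-1,0)$ and $(0,1)$; by the ground-state substitution $\phi=U\eta$ this forces $-\partial_y^2+U''/U\ge 0$ with Dirichlet conditions on each half-interval, so no $\lambda<0$ can be a half-interval Dirichlet eigenvalue, i.e.\ $\phi_\pm(0,\lambda)\neq 0$ for every $\lambda<0$. (Equivalently, Sturm separation against the positive solution $U\phi_1$, which vanishes only at $y=0$, gives the same conclusion.) With that substitution your proof is complete.
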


\begin{proof}
   Our proof relies on the representation of solution of \eqref{second order DE}. As it will be clear soon that such representation gives us a precise description on how the eigenvalue $\lambda$ and the parameter $m$ are related from which the monotonicity result then follows. 
 
 Naturally, in order to solve \eqref{second order DE} one would want to solve the homogeneous version of it namely
\begin{equation}\label{homogeneous DE}
\widetilde{\phi}''(y,\lambda)-\dfrac{U''(y)}{U(y)} \widetilde{\phi}(y,\lambda) +\lambda \widetilde{\phi}(y,\lambda)=0.
\end{equation} 

In view of \eqref{Rayleigh equation at c=0}, the homogeneous problem \eqref{homogeneous DE} can be obtained by setting $m$ to equal 0 in \eqref{Rayleigh equation at c=0}. By fixing the value for of $m$, the function $\widetilde{\phi}$ in \eqref{homogeneous DE} and the homogeneous equation itself become $m$-independent. Thanks to Lemma~\ref{phi1 phi2} in Appendix~\ref{Related Results}, the homogeneous differential equation \eqref{homogeneous DE} has a regular solution which takes the form $\widetilde{\phi}(y,\lambda)=U(y)\phi_1(y,\lambda)$, where 
\begin{equation}\label{phi1}
   \phi_1(y,\lambda)= 1+\int_0^{y} \dfrac{-\lambda} {U^2(w)} \int_0^{w}\phi_1(z,\lambda) U^2(z) \;dz\;dw,
\end{equation}
and it satisfies the conditions
\begin{equation}\label{boundary conditions}
\phi_1(0,\lambda)=1, \qquad \phi'_1(0,\lambda)=0. 
\end{equation} We would like to mention that, alternatively, to obtain the expression of $\phi_1$, one can solve the following differential equation

\begin{equation}\label{differential equation for phi1}
    (U^2(y) \phi'_1(y,\lambda))'=-\lambda \phi_1(y,\lambda)U^2,
\end{equation}
which is obtained by simply plugging $\widetilde{\phi}$ into \eqref{homogeneous DE}.

It is important to note that since $c=0$, then $\phi_2$ in Lemma~\ref{phi1 phi2} is uniformly equal to 1. That is the reason why we do not see it in the expression of $\widetilde{\phi}$.
As a consequence, one can check that
\begin{equation}\label{phi tilde}
    \Phi(y,\lambda):=\Xi^{-}(y,\lambda)\chi_{[-1,0)}(y)+\Xi^{+}(y,\lambda)\chi_{(0,1]}(y),
\end{equation}

with
 \begin{equation}\label{expansion}
 \begin{aligned}
 \Xi^{\mp}(y,\lambda)&:= U(y) \phi_1(y,\lambda) \int_{\mp1}^y \dfrac{1}{U^2(z)}\bigg(\dfrac{1}{\phi_1^2(z,\lambda)} -1\bigg)\;dz
                            \\ &\quad + U(y) \phi_1(y,\lambda) \int_{U(\mp1)}^{U(y)} \dfrac{(U^{-1})'(u_1)-(U^{\mp1})'(0)-(U^{-1})''(0)(u_1)}{(u_1)^2}\;du_1
                            \\ &\quad -\dfrac{U(y)\phi_1(y,\lambda)}{U'(0)U(z)}\Big|_{z=\mp1}^{z=y}
 \end{aligned}
 \end{equation}
solves the homogeneous problem \eqref{homogeneous DE} for $y \in [-1,1]\setminus \{0\}$, is continuous at $y=0$, and satisfies the boundary conditions: $\Phi(\pm 1,\lambda)=0$.

In view of the inhomogeneous differential equation \eqref{second order DE}, the function $\Phi(y,\lambda)$ \eqref{phi tilde} can be regarded as a weak solution of \eqref{second order DE} in the following sense
\begin{equation}
   -\int_{-1}^1 \Phi(y,\lambda) \varphi''(y)  + \dfrac{U''(y)}{U(y)} \Phi(y,\lambda)\varphi(y)-\lambda  \Phi(y,\lambda)\varphi(y) dy =\dfrac{m\Phi(0,\lambda)\varphi(0)}{U'(0)},
\end{equation}
for all test functions $\varphi(y)\in H^{1}_0(-1,1).$
Via integration by parts and using the fact that $\Phi$ solves the homogeneous problem away from 0, we arrive at the following equation
\[
-\partial_y \Xi^{+}(0,\lambda)+\partial_y \Xi^{-}(0,\lambda)=m\Phi(0,\lambda).
\]

Moreover, using the expression of $\Xi^{\mp}$ \eqref{expansion} and the facts that $U(0)=0,\phi_1(0,\lambda)=1$, and $\phi'_1(0,\lambda)=0$, we obtain
\[
\begin{aligned}
    \mathfrak{M}(\lambda):=m&=\dfrac{\partial_y \Xi^{-}(0,\lambda)-\partial_y \Xi^{+}(0,\lambda)}{\Phi(0,\lambda)}\\
    &=U'(0) \dfrac{\int_{-1}^{1} \dfrac{1}{U^2(y)}\bigg(\dfrac{1}{\phi^2_1(y,\lambda)}-1\bigg)\;dy}{\Phi(0,\lambda)}.
\end{aligned}
\]
%%%%%%%%%%%%%%%%%%%%%%%%%%%%%%%%%%%%%%%%%%%%%%%%%%%%%%%%%%%%%%%%
Notice that the term $\Phi(0,\lambda)$ in the denominator of $\mathfrak{M}$ can be expressed as 
\[
\begin{aligned}
\Phi(0,\lambda)&=\lim_{y\to 0^-}\Xi^{-}(y,\lambda)=\dfrac{-1}{U'(0)}.
\end{aligned}
\]
Altogether, we conclude that $\mathfrak{M}$ takes the form
\begin{equation}\label{expression of m}
    \mathfrak{M}(\lambda)=-(U'(0))^2\int_{-1}^{1} \dfrac{1}{U^2(y)}\bigg(\dfrac{1}{\phi^2_1(y,\lambda)}-1\bigg)\;dy.
\end{equation}

Having derived the expression for the function $\mathfrak{M}$, we now continue to prove its monotonicity with respect to $\lambda$. We focus our attention into looking at how $\mathfrak{M}$ varies with respect to $\lambda$. This can be done by first differentiating \eqref{expression of m} with respect to $\lambda$. Direct computation yields
\begin{equation} \label{monotonicity of m}
\partial_{\lambda} \mathfrak{M}=-(U'(0))^2 \int_{-1}^{1} \dfrac{-2}{(U(y)\phi_1(y,\lambda))^2}\dfrac{\partial_\lambda\phi_1(y,\lambda)}{\phi_1(y,\lambda)} \;dy=:-(U'(0))^2 \text{I}.
\end{equation}
 By applying Lemma~\ref{F} to $\partial_\lambda \phi_1/\phi_1$, we infer $ -(U'(0))^2 \text{I}<0$ which, as a result, implies
\begin{equation}\label{partial lambda m}
    \partial_{\lambda}\mathfrak{M}<0.
\end{equation}
Hence, we conclude that $\mathfrak{M}$ is monotonically decreasing with respect to $\lambda$. 
\end{proof}

 In the next lemma, we derive an estimate on the distance between two minimal eigenvalues of $\mathcal{H}_{m,0}$ and $\mathcal{H}_{m,\gamma}$, respectively.

\begin{lemma}\label{distance between minimal eigenvalues} For any $m>0$ and $\gamma \neq 0$, we have
\[
|\lambda_{m,0}-\lambda_{m,\gamma}|\lesssim m\gamma.
\]
\end{lemma}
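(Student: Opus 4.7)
The plan is to apply the min-max characterization of the minimal eigenvalues and reduce the problem to estimating the difference of the two quadratic forms on common test functions. Let $\Psi_\gamma,\Psi_0\in H^1_0(-1,1)$ denote $L^2$-normalized minimizers of $H_{m,\gamma}$ and $H_{m,0}$, respectively. Since each minimizer is admissible in the other Rayleigh quotient, the variational principle gives
\[
\lambda_{m,0}-\lambda_{m,\gamma}\leq H_{m,0}\Psi_\gamma-H_{m,\gamma}\Psi_\gamma,\qquad \lambda_{m,\gamma}-\lambda_{m,0}\leq H_{m,\gamma}\Psi_0-H_{m,0}\Psi_0,
\]
so it suffices to show $|H_{m,0}\Psi-H_{m,\gamma}\Psi|\lesssim m\gamma$ uniformly for $\Psi\in\{\Psi_0,\Psi_\gamma\}$.

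Since the Dirichlet terms cancel, the difference equals $\int_{-1}^1(U''/U-Q_{m,\gamma})|\Psi|^2\,dy-m|\Psi(0)|^2/U'(0)$. Using the explicit form of $Q_{m,\gamma}$ from \eqref{Q}, I split $U''/U-Q_{m,\gamma}=E_1+E_2$ with
\[
E_1=\frac{m\gamma^2 U''\widetilde\Gamma(y/\gamma)}{U\,U_{m,\gamma}},\qquad E_2=-\frac{m}{\gamma}\frac{\Gamma'(y/\gamma)}{y/\gamma}\cdot\frac{1}{U(y)/y+m\gamma\widetilde\Gamma(y/\gamma)/(y/\gamma)}.
\]
The smooth piece $E_1$ is pointwise $O(m\gamma^2)$ using $|U''/U|\leq C$, $U'\geq c_0$, and uniform control of $\widetilde\Gamma$ (from $\Gamma\in L^1$), so $|\int E_1|\Psi|^2|\lesssim m\gamma^2$. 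For the singular piece $E_2$, the key observation is that it approximates a negative Dirac mass of strength $m/U'(0)$ at $y=0$. Exploiting $\int_\mathbb{R}\Gamma'(\eta)/\eta\,d\eta=-1$ to rewrite the Dirac correction and changing variables $\eta=y/\gamma$ produces
\begin{multline*}
\int_{-1}^1 E_2|\Psi|^2\,dy-\frac{m|\Psi(0)|^2}{U'(0)}=-m\int_{-1/\gamma}^{1/\gamma}\frac{\Gamma'(\eta)}{\eta}\left[\frac{|\Psi(\gamma\eta)|^2}{U(\gamma\eta)/(\gamma\eta)+m\gamma\widetilde\Gamma(\eta)/\eta}-\frac{|\Psi(0)|^2}{U'(0)}\right]d\eta\\+\frac{m|\Psi(0)|^2}{U'(0)}\int_{|\eta|>1/\gamma}\frac{\Gamma'(\eta)}{\eta}\,d\eta.
\end{multline*}
The bracketed factor vanishes at $\eta=0$ and, by the expansions $U(\gamma\eta)/(\gamma\eta)=U'(0)+O(\gamma|\eta|)$ and $|\Psi(\gamma\eta)|^2-|\Psi(0)|^2=O(\gamma|\eta|)$ together with the boundedness of $\widetilde\Gamma(\eta)/\eta$, is of size $O((1+m)\gamma|\eta|)$. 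Multiplying by $|\Gamma'(\eta)/\eta|$ and integrating, using the integrability of $\Gamma'$ that follows from $|\Gamma'(y)|\leq C_1|y|$ near $0$ and the decay of $\Gamma'$ at infinity coming from $\Gamma\in H^4$, yields the $O(m\gamma)$ bound; the tail term is controlled by the same decay.

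The main obstacle is establishing the uniform-in-$\gamma$ Lipschitz-type regularity of the test functions at $y=0$ that supplies the crucial estimate $|\Psi(\gamma\eta)|^2-|\Psi(0)|^2=O(\gamma|\eta|)$. For $\Psi_\gamma$, this follows from the ODE $-\Psi_\gamma''=(\lambda_{m,\gamma}-Q_{m,\gamma})\Psi_\gamma$: although $\|Q_{m,\gamma}\|_{L^\infty}$ blows up like $1/\gamma$, the $L^1$-mass of its singular part is uniformly $O(m)$, so $(\lambda_{m,\gamma}-Q_{m,\gamma})\Psi_\gamma\in L^1$ with a bound independent of $\gamma$; consequently $\Psi_\gamma'\in W^{1,1}(-1,1)\hookrightarrow L^\infty$ uniformly, which yields the desired Lipschitz control. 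For $\Psi_0$, the explicit representation \eqref{phi tilde}--\eqref{expansion} furnishes one-sided $C^1$ bounds at the origin: the derivative of $\Psi_0$ jumps across $y=0$ due to the delta interaction, but one-sided Taylor expansion is all that the argument requires. Combining the $O(m\gamma^2)$ contribution from $E_1$ with the $O(m\gamma)$ contribution from the $E_2$-plus-Dirac combination then completes the proof.
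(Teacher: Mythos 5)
Your proposal is correct and follows essentially the same route as the paper: a two-sided variational comparison of the quadratic forms $H_{m,0}$ and $H_{m,\gamma}$ evaluated at each other's minimizer, followed by splitting the potential difference into a smooth $O(m\gamma^2)$ piece and a singular piece that approximates the Dirac interaction, handled via the rescaling $\eta=y/\gamma$, the normalization $\int\Gamma'(\eta)/\eta\,d\eta=-1$, Taylor expansion of $U(\gamma\eta)/(\gamma\eta)$, and the tail estimate. Your explicit justification of the uniform-in-$\gamma$ Lipschitz control of the minimizers is a point the paper leaves implicit, but it is a refinement of the same argument rather than a different one.
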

\begin{proof}
    Via the quadratic forms \eqref{quadratic form 2} and \eqref{quadratic form 1}, we can infer that 
 \begin{equation}\label{inequality for distance}
 \begin{aligned}
 &\lambda_{m,0} \leq H_{m,0}\Psi(y,m,\gamma,\lambda_{m,\gamma})\\
                    &= H_{m,\gamma}\Psi(y,m,\gamma,\lambda_{m,\gamma})-\int_{-1}^{1}\Bigg(\dfrac{U''(y)}{U_{m,\gamma}(y)}-\dfrac{U''(y)}{U(y)}\Bigg)|\Psi(y,m,\gamma,\lambda_{m,\gamma})|^2\;dy\\
                    &\quad -\int_{-1}^{1} \dfrac{m}{\gamma} \sigma(y/\gamma) \dfrac{|\Psi(y,m,\gamma,\lambda_{m,\gamma})|^2}{\frac{U(y)}{y}+\gamma m \widetilde{\Gamma}(y/\gamma)/(y/\gamma)} \; dy-\dfrac{m |\Psi(0,m,\gamma,\lambda_{m,\gamma})|^2} {U'(0)} \\
                    &= \lambda_{m,\gamma}-\int_{-1}^{1}\Bigg(\dfrac{U''(y)}{U_{m,\gamma}(y)}-\dfrac{U''(y)}{U(y)}\Bigg)|\Psi(y,m,\gamma,\lambda_{m,\gamma})|^2\;dy\\
                    &\quad- m\int_{-1/\gamma}^{1/\gamma} \sigma(y) \bigg(\dfrac{\gamma y}{U(\gamma y)}|\Psi(\gamma y,m,\gamma,\lambda_{m,\gamma})|^2-\dfrac{|\Psi(0,m,\gamma,\lambda_{m,\gamma})|^2}{U'(0)}\bigg)\;dy \\&\quad -m\int_{|y| \geq 1/ \gamma} \sigma(y) \dfrac{|\Psi(0,m,\gamma,\lambda_{m,\gamma})|^2}{U'(0)}\; dy +m \int_{-1}^{1} \dfrac{\sigma(y/\gamma)y^2}{\gamma U^2(y)} \dfrac{\gamma m \widetilde{\Gamma}(y/\gamma)/(y/\gamma)|\Psi(y,m,\gamma,\lambda_{m,\gamma})|^2}{1+\frac{y}{U(y)}\gamma m \widetilde{\Gamma}(y/\gamma)/(y/\gamma)} \; dy\\
                    &=:\lambda_{m,\gamma} +S_1+S_2+S_3+S_4,
 \end{aligned}
 \end{equation}
 where $\sigma(y)=\Gamma'(y)/y$.
 Let us now take a closer look at the terms $S_1,S_2,S_3,$ and $S_4$ and derive their estimates. We begin by deriving an estimate for $|S_1|$,
\begin{equation}
\begin{aligned}
|S_1|&\leq \int_{-1}^{1}\Bigg|\dfrac{U''(y)U(y)-U''(y)U_{m,\gamma}(y)}{U(y)U_{m,\gamma}(y)}\Bigg||\Psi(y,m,\gamma,\lambda_{m,\gamma})|^2\;dy\\&= \int_{-1}^{1}\Bigg|\dfrac{U''(y)(U(y)-U_{m,\gamma}(y))}{U(y)U_{m,\gamma}(y)}\Bigg||\Psi(y,m,\gamma,\lambda_{m,\gamma})|^2\;dy\\
         &\lesssim \int_{-1}^{1} \Bigg|\dfrac{m\gamma^2\widetilde{\Gamma}(y/\gamma)}{U_{m,\gamma}(y)}\Bigg||\Psi(y,m,\gamma,\lambda_{m,\gamma})|^2\;dy\lesssim \int_{-1}^{1} \Bigg| \dfrac{m\gamma \widetilde{\Gamma}(y/\gamma)}{y/\gamma}\Bigg||\Psi(y,m,\gamma,\lambda_{m,\gamma})|^2\;dy \lesssim m \gamma.
\end{aligned}
\end{equation}
Next is an estimate for $|S_2|$,
\begin{equation}
\begin{aligned}
|S_2| &\leq m \int_{-1/\gamma}^{1/\gamma} \sigma(y) \bigg|\dfrac{\gamma y}{U(\gamma y)}-\dfrac{1}{U'(0)}\bigg| |\Psi(\gamma y,m,\gamma,\lambda_{m,\gamma})|^2\;dy\\&\quad
+m \int_{-1/\gamma}^{1/\gamma} \sigma(y)\bigg||\Psi(\gamma y,m,\gamma,\lambda_{m,\gamma})|^2-|\Psi(0,m,\gamma,\lambda_{m,\gamma})|^2\bigg|\dfrac{1}{U'(0)} \;dy\\
&=:\mathbb{I}+\mathbb{II}.
\end{aligned}
\end{equation}
Observe that
\[
\begin{aligned}
 \mathbb{I}&=m\int_{-1/\gamma}^{1/\gamma}\sigma(y) \bigg|\dfrac{\gamma y}{U(\gamma y)}-\dfrac{1}{U'(0)}\bigg| |\Psi(\gamma y,m,\gamma,\lambda_{m,\gamma})|^2\;dy \\
&\lesssim m \int_{-1/\gamma}^{1/\gamma} \dfrac{\sigma(y) |\Psi(\gamma y,m,\gamma,\lambda_{m,\gamma})|^2 \gamma^3 y^3}{\gamma y}\;dy\\
&\lesssim m \int_{-1/\gamma}^{1/ \gamma} \sigma(y) |\Psi(\gamma y,m,\gamma,\lambda_{m,\gamma})|^2 \gamma^2 y^2\;dy\lesssim m \gamma^2\lesssim m \gamma.
\end{aligned}
\] Additionally,
\[
\begin{aligned}
\mathbb{II}&=m \int_{-1/\gamma}^{1/\gamma} \sigma(y)\bigg||\Psi(\gamma y,m,\gamma,\lambda_{m,\gamma})|^2-|\Psi(0,m,\gamma,\lambda_{m,\gamma})|^2 \bigg|\dfrac{1}{U'(0)} \;dy \\
&\lesssim m \int_{-1/\gamma}^{1/\gamma} \sigma(y) \bigg|\Psi(\gamma y,m,\gamma,\lambda_{m,\gamma})+\Psi(0,m,\gamma,\lambda_{m,\gamma}) \bigg| \bigg|\Psi(\gamma y,m,\gamma,\lambda_{m,\gamma})-\Psi(0,m,\gamma,\lambda_{m,\gamma}) \bigg|\;dy\\
        &\lesssim m  \int_{-1/\gamma}^{1/\gamma} \sigma(y) \norm{\Psi(\gamma y,m,\gamma,\lambda_{m,\gamma})}_{L^{\infty}}\dfrac{1}{\gamma y} \Bigg(\int_{0}^{\gamma y} |\Psi'(z,m,\gamma,\lambda_{m,\gamma})|\;dz\Bigg) \gamma y\; dy\\
        &\lesssim m \norm{\sigma(y)}_{L^{\infty}} \norm{\Psi(y,m,\gamma,\lambda_{m,\gamma})}_{L^{\infty}}\int_{-1/\gamma}^{1/\gamma}\dfrac{1}{\gamma y}\int_{0}^{\gamma y} \Bigg(|\Psi'(z,m,\gamma,\lambda_{m,\gamma})|\;dz\Bigg) \gamma y\; dy\\&
        \lesssim m \gamma.                                                    
\end{aligned}
\] Hence, combining the two estimates yields 
$|S_2| \lesssim m \gamma$.

Further, it is easy to see that $|S_3|$ satisfies the following inequality
\[
\begin{aligned}
|S_3|= m\bigg|\int_{|y| \geq 1/ \gamma} \sigma(y) \dfrac{|\Psi(0,m,\gamma)|^2}{U'(0)}\; dy\bigg| \lesssim m \gamma.
\end{aligned}
\]
Lastly, it remains to obtain an estimate on $|S_4|$,
\[
\begin{aligned}
|S_4|&= m \bigg|\int_{-1}^{1} \dfrac{\sigma(y/\gamma)}{\gamma} \dfrac{y^2}{U^2(y)} \dfrac{(\gamma m \widetilde{\Gamma}(y/\gamma)/(y/\gamma))}{1+\frac{y}{U(y)}\gamma m \widetilde{\Gamma}(y/\gamma)/(y/\gamma)}\; dy\bigg|\\
         &\leq m  \bigg|\int_{-1/ \gamma}^{1/ \gamma} \sigma(y) \dfrac{y^2\gamma^2}{U^2(y\gamma)} \dfrac{(\gamma m \widetilde{\Gamma}(y)/(y))}{1+\frac{y\gamma}{U(y\gamma)}\gamma m \widetilde{\Gamma}(y)/(y)}\; dy\bigg| \lesssim m \gamma \bigg|\int_{\mathbb{R}} \sigma(y) \;dy\bigg| \approx m \gamma.
\end{aligned}
\] Note that to get the estimate for $|S_4|$,  we have used the fact $\widetilde{\Gamma}(z)/(z)$ is bounded.
 Hence, we can conclude that 
\begin{equation}\label{upper bound m gamma}
\lambda_{m,0}-\lambda_{m,\gamma}\lesssim m\gamma.
\end{equation}

 The other direction of the estimate above, namely $\lambda_{m,\gamma}-\lambda_m \lesssim m \gamma$ can be derived similarly as follows:
 \begin{align*}
 \lambda_{m,\gamma} &\leq H_{m,\gamma}\Psi(y,m,\lambda_{m,0})(y)\\
                    &= H_{m,0}\Psi(y,m,\lambda_{m,0})(y)+\int_{-1}^{1}\Bigg(\dfrac{U''(y)}{U_{m,\gamma}(y)}-\dfrac{U''(y)}{U(y)}\Bigg)|\Psi(y,m,\lambda_{m,0})|^2\;dy\\
                     &\quad+\int_{-1}^{1} \dfrac{m}{\gamma} \sigma(y/\gamma) \dfrac{1}{\frac{U(y)}{y}+\gamma m \widetilde{\Gamma}(y/\gamma)/(y/\gamma)}|\Psi(y,m,\lambda_{m,0})|^2 \; dy
                    +\dfrac{m |\Psi(0,m,\lambda_{m,0})|^2} {U'(0)} \\
                    &= \lambda_{m,0}+\int_{-1}^{1}\Bigg(\dfrac{U''(y)}{U_{m,\gamma}(y)}-\dfrac{U''(y)}{U(y)}\Bigg)|\Psi(y,m,\lambda_{m,0})|^2\;dy\\
                    &\quad+m \int_{-1/\gamma}^{1/\gamma} \sigma(y) \bigg(\dfrac{\gamma y}{U(\gamma y)}|\Psi(\gamma y,m,\lambda_{m,0})|^2-\dfrac{|\Psi(0,m,\lambda_{m,0})|^2}{U'(0)}\bigg)\;dy \\&\quad +m\int_{|y| \geq 1/ \gamma} \sigma(y) \dfrac{|\Psi(0,m,\lambda_{m,0})|^2}{U'(0)}\; dy -m \int_{-1}^{1} \dfrac{\sigma(y/\gamma)y^2}{\gamma U^2(y)}  \dfrac{\gamma m \widetilde{\Gamma}(y/\gamma)/(y/\gamma)|\Psi(y,m,\lambda_{m,0})|^2}{1+\frac{y}{U(y)}\gamma m \widetilde{\Gamma}(y/\gamma)/(y/\gamma)}\; dy\\
                    &=:\lambda_{m,0} +\widetilde{S}_1+\widetilde{S}_2+\widetilde{S}_3+\widetilde{S}_4.
 \end{align*}
 As before, we can also obtain the bounds for all $\widetilde{S}_i$ for $i=1,2,3,4$, which eventually lead to the inequality 
 \begin{equation}\label{lower bound m gamma}
     \lambda_{m,\gamma}-\lambda_{m,0}\lesssim m \gamma.
 \end{equation} Finally, combining both estimates in \eqref{upper bound m gamma} and \eqref{lower bound m gamma} yields
 \begin{equation}\label{distance between eigenvalues}
      |\lambda_{m,0}-\lambda_{m,\gamma}|\lesssim m \gamma.
 \end{equation}
 The proof is then complete. \qedhere
\end{proof}

In addition to the estimate in Lemma~\ref{distance between minimal eigenvalues}, we also need a monotonicity property of the minimal eigenvalues of both operators $\mathcal{H}_{\widetilde{m},\gamma}$ and $\mathcal{H}_{\bar{m},\gamma}$ for $0<\widetilde{m}<\bar{m}$. This is the content of the next lemma. 
\begin{lemma}\label{monotonicity of lambda m gamma}
For any $0<\widetilde{m} <\bar{m}$ and $\gamma>0$, we have
\[
\lambda_{\bar{m},\gamma}-\lambda_{\widetilde{m},\gamma}<0.
\]
\end{lemma}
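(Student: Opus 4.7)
The plan is to use the variational characterization of $\lambda_{m,\gamma}$, mirroring the strategy in the proof of Lemma~\ref{distance between minimal eigenvalues}. Let $\Psi_{\widetilde{m},\gamma}$ denote the $L^2$-normalized minimizer of $H_{\widetilde{m},\gamma}$. Using it as a trial function for $H_{\bar{m},\gamma}$ gives
\[
\lambda_{\bar{m},\gamma}\leq H_{\bar{m},\gamma}\Psi_{\widetilde{m},\gamma}=\lambda_{\widetilde{m},\gamma}+\int_{-1}^{1}(Q_{\bar{m},\gamma}-Q_{\widetilde{m},\gamma})|\Psi_{\widetilde{m},\gamma}|^2\,dy,
\]
so the lemma reduces to showing that this final integral is strictly negative. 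A direct algebraic computation of the difference of the two fractions defining $Q$ gives the identity
\[
Q_{\bar{m},\gamma}(y)-Q_{\widetilde{m},\gamma}(y)=(\bar{m}-\widetilde{m})\cdot\frac{\Gamma'(y/\gamma)\,U(y)-\gamma^2 U''(y)\widetilde{\Gamma}(y/\gamma)}{U_{\bar{m},\gamma}(y)\,U_{\widetilde{m},\gamma}(y)}.
\]

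The denominator is strictly positive on $(-1,1)\setminus\{0\}$ since $U_{\bar{m},\gamma}$ and $U_{\widetilde{m},\gamma}$ share the sign of $y$. Under the standing assumptions, the function $\Gamma$ is a nonnegative bump centered at the origin (so $\widetilde{\Gamma}(y/\gamma)$ has the sign of $y$) and $U''$ has the sign opposite to $y$, making $\Gamma'(y/\gamma)U(y)\leq 0$ while $-\gamma^2 U''(y)\widetilde{\Gamma}(y/\gamma)\geq 0$ everywhere. Strict negativity of the integral thus requires showing that the first, negative contribution strictly dominates the second. Writing $\Gamma'(z)=z\sigma(z)$ with $\sigma<0$ and $\int_{\mathbb{R}}\sigma\,dy=-1$, and expanding $U(y)\sim U'(0)y$ near the origin, one sees that $\Gamma'(y/\gamma)U(y)/(U_{\bar{m},\gamma}U_{\widetilde{m},\gamma})\sim \sigma(y/\gamma)/(U'(0)\gamma)$ in the concentration layer $|y|\lesssim\gamma$, whereas $\gamma^2 U''(y)\widetilde{\Gamma}(y/\gamma)/(U_{\bar{m},\gamma}U_{\widetilde{m},\gamma})$ is bounded in that layer (thanks to $U''(0)=0$) and $O(\gamma^2)$ on the complement. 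Splitting the integral into pieces near and away from $y=0$ and rescaling $y=\gamma w$, exactly as with the terms $S_1$--$S_4$ in Lemma~\ref{distance between minimal eigenvalues}, the leading contribution comes from the concentration region, where $|\Psi_{\widetilde{m},\gamma}|^2\approx|\Psi_{\widetilde{m},\gamma}(0)|^2$, giving
\[
\int_{-1}^1(Q_{\bar{m},\gamma}-Q_{\widetilde{m},\gamma})|\Psi_{\widetilde{m},\gamma}|^2\,dy\leq -\frac{\bar{m}-\widetilde{m}}{U'(0)}|\Psi_{\widetilde{m},\gamma}(0)|^2+O\bigl((\bar{m}-\widetilde{m})\gamma\bigr).
\]
Since $\Psi_{\widetilde{m},\gamma}(0)\neq 0$ by Perron--Frobenius positivity of the first eigenfunction of $\mathcal{H}_{\widetilde{m},\gamma}$, the right-hand side is strictly negative, yielding $\lambda_{\bar{m},\gamma}<\lambda_{\widetilde{m},\gamma}$.

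The main obstacle will be the mixed sign of the numerator of $Q_{\bar{m},\gamma}-Q_{\widetilde{m},\gamma}$: one has to quantify precisely how much the $\Gamma'(y/\gamma)U(y)$ contribution dominates the $\gamma^2 U''(y)\widetilde{\Gamma}(y/\gamma)$ correction, which forces the same kind of layered rescaling/concentration analysis used in Lemma~\ref{distance between minimal eigenvalues}. A secondary subtlety is controlling the apparent singularities coming from $U_{\bar{m},\gamma}U_{\widetilde{m},\gamma}$ vanishing at $y=0$; these are absorbed using $U(0)=U''(0)=0$, $\widetilde{\Gamma}(0)=0$, and the regularity of the first eigenfunction at the origin, so that the representation $\Psi_{\widetilde{m},\gamma}=U_{\widetilde{m},\gamma}\phi$ with $\phi$ smooth can be used to rewrite the integrand in a form free of spurious singularities.
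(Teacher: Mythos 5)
Your proposal is correct and follows the same basic strategy as the paper: use the minimizer $\Psi_{\widetilde m,\gamma}$ of $H_{\widetilde m,\gamma}$ as a trial function in the quadratic form for $\bar m$, reducing the lemma to the sign of $\int_{-1}^1\big(Q_{\bar m,\gamma}-Q_{\widetilde m,\gamma}\big)|\Psi_{\widetilde m,\gamma}|^2\,dy$, and your finite-difference identity for $Q_{\bar m,\gamma}-Q_{\widetilde m,\gamma}$ (the cross terms cancel exactly) is the correct analogue of the paper's formula for $\partial_m(U''_{m,\gamma}/U_{m,\gamma})$. The only divergence is in how the sign of that integral is established: the paper argues \emph{pointwise}, bounding the bracket by $-C_0/\gamma+C_1\gamma^2\|\Gamma\|_{L^\infty}<0$ for $\gamma$ small, which is shorter but tacitly uses a uniform negative upper bound on $\Gamma'(z)/z$; you instead run the concentration-layer/rescaling argument of Lemma~\ref{distance between minimal eigenvalues} to get the integrated bound $-\tfrac{\bar m-\widetilde m}{U'(0)}|\Psi_{\widetilde m,\gamma}(0)|^2+O((\bar m-\widetilde m)\gamma)$, closed by positivity of the ground state at $y=0$. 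Your version costs more work but only uses $\int_{\mathbb{R}}\Gamma'(z)/z\,dz=-1$ rather than pointwise domination, so it is, if anything, the more robust of the two; both require $\gamma$ small, consistent with the paper's standing restriction $\gamma\le\gamma_0$.
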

\begin{proof}
      Via the quadratic form \eqref{quadratic form 2}, we deduce the following inequality
 \begin{equation}\label{lambda m tilde gamma}
 \begin{aligned}
     \lambda_{\bar{m},\gamma} &\leq \lVert \Psi'(y,\widetilde{m},\gamma) \rVert_{L^2}^2 + \int_{-1}^{1} Q_{\bar{m},\gamma} |\Psi(y,\widetilde{m},\gamma)|^2   \; dy\\
     &=\lambda_{\widetilde{m},\gamma}+\int_{-1}^1\bigg(\dfrac{U''_{\bar{m},\gamma}(y)}{U_{\bar{m},\gamma}(y)}-\dfrac{U''_{\widetilde{m},\gamma}(y)}{U_{\widetilde{m},\gamma}(y)}\bigg)|\Psi(y,\widetilde{m},\gamma)|^2  \; dy.
 \end{aligned}
 \end{equation}
 
 Let us focus on the integrand on the right hand side of \eqref{lambda m tilde gamma}. Notice that $\Psi^2(y,\bar{m},\gamma)$ is positive. Therefore, it suffices to check the sign of the remaining term in the integrand, that is $Q_{\bar{m},\gamma}(y)-Q_{\widetilde{m},\gamma}(y)$. We claim that this difference is strictly negative. This is equivalent to proving the derivative of $Q_{m,\gamma}(y)$ with respect to $y$ is uniformly bounded above by a negative constant. Indeed, via standard computations, we obtain
 \begin{equation}
 \begin{aligned}
     \partial_m(\dfrac{U''_{m,\gamma}(y)}{U_{m,\gamma}(y)})&=\dfrac{U(y)\Gamma'(y/\gamma)-U''(y)\gamma^2 \widetilde{\Gamma}(y/\gamma)}{U^2_{m,\gamma}(y)}= \dfrac{U(y)y\bigg(\frac{\Gamma'(y/\gamma)
}{y}-\frac{U''(y)}{U(y)}\gamma^2\frac{1}{y}\widetilde{\Gamma}(y/\gamma)\bigg)}{U^2_{m,\gamma}(y)}\\
&\lesssim \bigg(\dfrac{-C_0}{\gamma}+C_{1}\gamma^2 \lVert \Gamma \rVert_{L^{\infty}}\bigg) \lesssim \dfrac{-C_0}{\gamma}
 \end{aligned}
 \end{equation}
 for $\gamma>0$ small enough. Thus, $\partial_m(\dfrac{U''_{m,\gamma}}{U_{m,\gamma}})<0$ which then implies
 \[
 \bigg(\dfrac{U''_{\bar{m},\gamma}}{U_{\bar{m},\gamma}}-\dfrac{U''_{\widetilde{m},\gamma}}{U_{\widetilde{m},\gamma}}\bigg)<0,
 \]
 for sufficiently small $\gamma>0$.
 This inequality in concert with the one in \eqref{lambda m tilde gamma} lead us to
 \begin{equation}\label{inequality lambda m gamma}
     \lambda_{\bar{m},\gamma}-\lambda_{\widetilde{m},\gamma}<0.
 \end{equation}
 This completes the proof. \qedhere
\end{proof}

In the next three remarks, we provide more discussions on the Wronskian. 
\begin{remark}
We would like to mention that we have an explicit formula for the Wronskian for $c \in \mathfrak{B} \cap \mathfrak{D}_{\epsilon_0}\setminus \mathbb{R}$. Observe that we obtain a representation formula for each $\varphi^{-}$ and $\varphi^{+}$ mentioned in \eqref{wronskian of generic varphi}
\[
\begin{aligned} \label{varphis}
  &\varphi^{-}(y,m,c,\lambda)=\phi(-1,m,c,\lambda)\phi(y,m,c,\lambda) \int_{-1}^{y} \dfrac{1}{\phi^2(z,m,c,\lambda)}dz,\\
  &\varphi^{+}(y,m,c,\lambda)=\phi(1,m,c,\lambda)\phi(y,m,c,\lambda) \int_{1}^{y} \dfrac{1}{\phi^2(z,m,c,\lambda)}dz,
\end{aligned}
\]
where $\phi(y,m,c,\lambda)$ is defined  as in Lemma \ref{phi1 phi2}. It is also not hard to see that  $\varphi^{-}$ and $\varphi^{+}$ satisfy the boundary conditions \eqref{boundary conditions for varphi}.
By \eqref{wronskian of generic varphi}, we know that the Wronskian of $\varphi^{-}$ and  $\varphi^{+}$ is given by
\begin{equation}\label{wronskian of varphi}
\mathcal{W}[\varphi^{-},\varphi^{+}](m,\gamma,c,\lambda)= \phi(-1,m,c,\lambda)\phi(1,m,c,\lambda) \int_{-1}^{1} \dfrac{1}{\phi^2(y,m,c,\lambda)}\;dy.
\end{equation}
\end{remark}

 One can verify from the definition of $\phi(y,m,c,\lambda)$ that both terms $\phi(-1,m,c,\lambda)$ and $\phi(1,m,c,\lambda)$ in the Wronskian $\mathcal{W}$ are away from zero for all $c\in \mathfrak{B} \cap \mathfrak{D}_{\epsilon_0}\setminus \mathbb{R}$. This leads us to state the next remark.
\begin{remark}\label{equivalent condition for c eval and wronskian being 0}
    Let $c\in \mathfrak{B} \cap \mathfrak{D}_{\epsilon_0}\setminus \mathbb{R}$. the constant $c$ is an eigenvalue of operator $\mathcal{R}_{m,\gamma,k}$ \eqref{operator R} if and only if 
    \[
    W(m,\gamma,c,\lambda):=\int_{-1}^{1} \dfrac{1}{\phi^2(y,m,\gamma,c,\lambda)}\;dy=0.
    \]
More precisely, this gives us a criterion to decide whether or not $c$ is an eigenvalue of $\mathcal{R}_{m,\gamma,k}$. That is to say one only needs to check if $W$ is zero at $c$. For the rest of the paper $W$ will be called ``modified Wronskian".

 One may observe that the Wronskian $\mathcal{W}$ and modified Wronskian $W$ share the same zeros. However, it is important to note that although  $\mathcal{W}[\varphi^{-},\varphi^{+}](m,\gamma,c,\lambda)=\varphi^{-}(1,m,\gamma,c,\lambda)$ is analytic in $c$, the modified Wronskian $W$ may not be so for some $c$. Therefore, when the analyticity  is needed in our argument, we shall work with $\mathcal{W}$, otherwise it suffices to only work with the modified Wronskian $W$.
\end{remark}

\begin{remark}\label{continuity of wronskian}
    It is proved in \cite[Lemma 6.3]{WeiZhangZhao2015} (also see \cite{limasmoudizhao2022}) that for any given $m,\gamma$ (namely fixed shear flow) and $\lambda$, the Wronskian $\mathcal{W}[\varphi^{-},\varphi^{+}](m,\gamma,c,\lambda)$ is one-sided continuous to the boundary $\mathrm{Ran}\, U_{m,\gamma}$ from both $\mathfrak{Im}\, c>0$ and $\mathfrak{Im}\, c<0$. Namely, there are real functions $A(m,\gamma,\mathfrak{Re}\,c,\lambda)$ and $B(m,\gamma,\mathfrak{Re}\,c,\lambda)$ such that 
    \begin{align*}
    \lim_{\mathfrak{Im}\, c\to 0_{\pm}}
    \mathcal{W}[\varphi^{-},\varphi^{+}](1,m,\gamma,c,\lambda)
    =A(m,\gamma,\mathfrak{Re}\,c,\lambda)\mp iB(m,\gamma,\mathfrak{Re}\,c,\lambda)
    \end{align*}
    where 
    \begin{equation*}
        \begin{aligned}
        &B(m,\gamma, \mathfrak{Re}\,c,\lambda)\\
        &=\frac{U''_{m,\gamma}(U^{-1}_{m,\gamma}(\mathfrak{Re}\, c))}{U'_{m,\gamma}(U^{-1}_{m,\gamma}(\mathfrak{Re}\, c))^3}(U_{m,\gamma}(-1)-\mathfrak{Re}\, c)(U_{m,\gamma}(1)-\mathfrak{Re}\, c)\phi_1(1,m,\mathfrak{Re}\, c,\lambda)\phi_1(-1,m,\mathfrak{Re}\,c,\lambda)
        \end{aligned}
    \end{equation*}
    It is easy to check that $B(m,\gamma,0,\lambda)=0$, which means that $\mathcal{W}[\varphi^{-},\varphi^{+}](1,m,c,\lambda)$ is continuous at $0$. 

    In the same paper \cite{WeiZhangZhao2015}, the authors also proved the following statement:
    The pure imaginary number $ik c$ with $0\neq k\in \mathbb{Z}$ and $c\in \mathrm{Ran}\, U_{m,\gamma}$ is an embedded eigenvalue of the Rayleigh operator namely, the Rayleigh equation \eqref{Rayleigh Equation} with $c\in \mathrm{Ran}\, U_{m,\gamma}$, has nontrivial solutions, if and only if
    \begin{equation}
        A(m,\gamma,c,-k^2)^2+B(m,\gamma,c,-k^2)^2=0.
    \end{equation}
\end{remark}
Having established the tools and machinery for our argument, we now present our proofs. 
\section{Proofs}\label{proof of main results}
This section is devoted to proving both Theorems~\ref{linear} and \ref{nonlinear}. It consists of two parts. The first portion is composed of a sequence of proof, each corresponds to the regime when $m<m_*, m=m_*,$ and $m>m_*$. Following that would be the proof of Theorem~\ref{nonlinear}.

Let us now establish a simple yet crucial fact for our proofs. Recall in \eqref{expression of m}, we know that for a given $\lambda$, one can find $m$ such that $\lambda$ is the corresponding minimal eigenvalue of $\mathcal{H}_{m,0}$. Let $\lambda_{m_0,0}=-1$ be the minimal eigenvalue of $\mathcal{H}_{m_0,0}$ (in other words for $m=m_0$). We shall use this fact as a building block in our arguments.

%%%PROOF OF THEOREM 1.1 b
We are now in the position of proving Theorem~\ref{linear}~\ref{Neutral mode}.
\begin{proof}[\textbf{Proof of Theorem~\ref{linear}\ref{Neutral mode}}]
By using inequality \eqref{distance between eigenvalues} we have just derived, for $m=m_0$, there exists $\gamma_0>0$ such that 
\[
|\lambda_{m_0,\gamma}-\lambda_{m_0,0}|\lesssim m_0 \gamma,
\] 
for all $\gamma \in (0,\gamma_0)$.

Let $\gamma_* \in (0,\gamma_0)$. Thanks to the expression of $m$ in \eqref{expression of m} and its monotonicity in  \eqref{partial lambda m} , there exists $m=m_1$ such that $\lambda_{m_1,0}<-1-Cm_1\gamma_*$ with $m_1>m_0$. Moreover, via inequality \eqref{distance between eigenvalues}, we know that 
\[
\lambda_{m_1,\gamma_*}\leq Cm_1\gamma_* +\lambda_{m_1,0}<-1.
\]
By similar argument, let $m_2<m_0$. Again, using the formula of $m$ in \eqref{expression of m} and the monotonicity of $m$ in \eqref{partial lambda m}, we can choose $m=m_2$, such that $\lambda_{m_2,0}>-1+Cm_2\gamma_*$. Further, via inequality \eqref{distance between eigenvalues} we have
\[
\lambda_{m_2,\gamma_*}\geq \lambda_{m_2,0}-Cm_2\gamma_*>-1. 
\]
Since, $\lambda_{m_2,\gamma_*}>-1$ and $\lambda_{m_1,\gamma_*}<-1$, then there exists $m=m_*$ such that $m_1>m_*>m_2$ and
\[
\lambda_{m_*,\gamma_*}=-1.
\]
This implies that $c=0$ is the an embedded eigenvalue of the Rayleigh operator. Since there is no other inflection point of $U_{m_*,\gamma_*}$ apart from $y=0$, then $c=0$ is the only embedded eigenvalue of $\mathcal{L}_{m_*,\gamma_*}$. This proves part\ref{Neutral mode}.
\end{proof}
Next, we prove part~\ref{Stability} of Theorem~\ref{linear}. 
\begin{proof}[\textbf{Proof of Theorem~\ref{linear}\ref{Stability}}]
Observe that for any $m<m_*$, where $m_*$ is the one mentioned before, we have
\begin{equation} \label{spectra condition for m<m_*}
\lambda_{m,\gamma_*}>-1=\lambda_{m_*,\gamma_*}.
\end{equation}
But we know that $-k^2\leq-1$ which means  $c=0$ cannot be an embedded eigenvalue of $\mathcal{R}_{m,\gamma,k}$ for $m<m_*$. Via Lemma~\ref{monotonicity of lambda m gamma}, the nonexistence of embedded eigenvalue also holds true for any $m<m_*$ and $\gamma\in (0,\gamma_0)$. 

To that end, we look at the following Rayleigh equation for $k\neq 0$,
\begin{equation}\label{RE}
\Big(U_{m,\gamma_*}-c\Big)(\partial^2_y-k^2)\phi-U''_{m,\gamma_*} \phi=0.
\end{equation} We would like to note again that for $k>C_0$, where $C_0$ some generic constant, the Wronskian $\mathcal{W}(m,\gamma_*,c,-k^2) \neq 0$ for all $c$. Hence, one can apply the same argument below even for all $C_0>|k|\geq 1$.  We show that the Rayleigh equation contains no nontrivial solution $\phi\in H^1_0(-1,1).$ This is equivalent to proving that the Wronskian $\mathcal{W}(m,\gamma_*,c,-k^2)\neq 0$. %assuming $\varphi_1$ and $\varphi_2$ are solutions of \eqref{RE} with $\varphi_1(-1)=\varphi_2(1)=0$ and $\varphi'_1(-1)=\varphi'_2(1)=1$. 

Suppose that \eqref{RE} has a nontrivial solution for some $c$. We define the following set $\mathcal{M}_k$ and its infimum
\begin{align*}
\mathcal{M}_k:&=\{m: \exists\, c \in S\setminus R,\  \mathcal{W}(m,\gamma_*,c,-k^2)=0\}
, \quad m_{k,\infty}:=\inf_{m\in \mathcal{M}_k}m,
\end{align*}
where $R=\bigcup\limits_{m<m_*}[U_{m,\gamma_*}(-1),U_{m,\gamma_*}(1)]=\mathrm{Ran}\, U_{m,\gamma_*}$ and
$$
S=\left\{c\in \mathbb{C}:~\mathrm{dist}\,(c,R)\leq d_0\right\},
$$
with \[d_0=\sup\limits_{m<m_*}\frac{U_{m,\gamma_*}(1)-U_{m,\gamma_*}(-1)}{2}.\]
Note that $\mathcal{W}(m,\gamma_*,c,-k^2)\neq 0$ for $c\notin S$ for all $k$.

Since $\mathcal{M}_k=\emptyset$ for large $k$, we only focus on a finite number of $k$'s. The following argument can be done for each $k$. So we drop the $k$ in the notations and let $\mathcal{W}(m,c)=\mathcal{W}(m,\gamma_*,c,-k^2)$, $W(m,\gamma_*,c)=W(m,\gamma_*,c,-k^2)$,  $\mathcal{M}=\mathcal{M}_k$, and $m_{\infty}=m_{k,\infty}$ for convenience. 
Our strategy is to show that $\mathcal{M}$ is open and closed in the natural topology. As a consequence, it has to be either the empty set or the whole real line. The later one cannot be true since $0\notin \mathcal{M}$, hence it has to be empty. 

Let us start by showing the open property of $\mathcal{M}$. This can be done by proving that for any $m\in \mathcal{M}$, there exists $\delta>0$ such that for all $x \in B_{\delta}(m)$ we have $x \in \mathcal{M}$.
By way of contradiction, we suppose that $\mathcal{M}$ is not open. Then there is $m\in\mathcal{M}$, such that for all $\delta>0$ there exists $x \in B_{\delta}(m)$ such that $x \notin \mathcal{M}$. Let $c_0$ be one zero of $\mathcal{W}(m,\gamma_*,c)$. Next, we focus on the closed neighborhood of $c_0$: $N_{\delta_1}(c_0)=\{z\in S\setminus R:~\mathrm{dist}(c_0,z)\leq \delta_1\}$ with $\delta_1$ fixed depending on $m$ so that $\mathcal{W}(m,\gamma_*,c) \neq 0$ for all $c \in \partial N_{\delta_1}(c_0)$. By the continuity of $\mathcal{W}(x,\gamma_*,c)$ in $c$ and $x$,  there exists $\epsilon$ small enough such that for all $|x-m|<\epsilon$, 
\[
\begin{aligned}
   |\mathcal{W}(m,\gamma_*,c)-\mathcal{W}(x,\gamma_*,c)|<|\mathcal{W}(m,\gamma_*,c)|\quad \text{for any} \quad c\in \partial N_{\delta_1}(c_0).
\end{aligned}
\] 
However, by Rouche's theorem, we know therefore that the number of zeros of $\mathcal{W}(m,\gamma_*,c)$ is equal to that of $\mathcal{W}(x,\gamma_*,c)$ in $N_{\delta_1}(c_0)$. However, $c_0$ is a zero of $\mathcal{W}(m,\gamma_*,c)$, but $W(x,\gamma_*,c) \neq 0$ in $N_{\delta_1}(c_0)$. Hence, we arrive at a contradiction. This proves that $\mathcal{M}$ has to be open.

It remains, therefore, to show that $\mathcal{M}$ is closed. Consider the sequence $\{(c_n,m_n)\}$ with 
\[\mathcal{W}(m_n,\gamma_*,c_n)=0=W(m_n,\gamma_*,c_n).
\] Recycling the same notation, there is a subsequence denoted by $(c_n,m_n)$ such that $(c_n,m_n) \to (c_\infty, m_{\infty})$ as $n\to \infty$. Since we are studying the limit of the subsequence $c_n$ to a real number, it suffices to only work with $c_n \in \mathfrak{B}\cap \mathfrak{D}_{\epsilon_0}$. This allows us to use the Wronskian $\mathcal{W}$. We claim that  $c_\infty$ does not belong to $R$. This then allows us to say that $c_\infty$ must belong to $S \setminus R$.

By way of contradiction, suppose that $c_{\infty} \in R$. Recall that, via the result in Lemma~\ref{phi1 phi2}, we know that the Rayleigh equation \eqref{RE} admits a regular solution of the form (by taking $\lambda=-k^2$)
    \[
     \phi(y,m,\gamma_*,c_n)=(U_{m,\gamma_*}(y)-c_n){\phi}_1(y,m,\gamma_*,\mathfrak{Re}c_n)\phi_2(y,m,\gamma_*,c_n). 
    \]
    We also would like to mention the following identity:
    \[
    \begin{aligned}
        \lim_{(m_n,c_n)\to(m_\infty,c_\infty)}\mathcal{W}(m_n,\gamma_*,c_n)&=\lim_{c_n\to c_\infty}\lim_{m_n\to m_\infty}\mathcal{W}(m_n,\gamma_*,c_n)\\&
        =\lim_{c_n\to c_\infty}\mathcal{W}(m_\infty,\gamma_*,c_n).
    \end{aligned}
    \]
    For fixed $m=m_\infty$, by Remark~\ref{continuity of wronskian}, we know that 
    \[
    \mathcal{W}(m_\infty,\gamma_*,c_n)\to A(m_\infty,\gamma_*,c_\infty)\pm iB(m_\infty,\gamma_*,c_\infty),\quad \text{as } \mathfrak{Im}\; c_{n}\to 0. 
    \]
    However, due to the continuity of $\mathcal{W}$ with respect to $c$ and the fact that $\mathcal{W}(m_n,c_n)=0$, we have $\mathcal{W}(m_\infty,c_\infty)=0$. This implies that
    \[
    A^2(m_{\infty},\gamma_*,c_\infty)+B^2(m_{\infty},\gamma_*,c_\infty)=0.
    \]
    By the limit identity above, we conclude that 
    \[
    \lim_{c_n\to c_\infty}\mathcal{W}(m_\infty,\gamma_*,c_n)=0.
    \]
    Hence, $c_\infty$ must be an embedded eigenvalue of the operator $\mathcal{R}_{m_\infty,\gamma_*,k}$. This contradicts the fact that the operator $\mathcal{R}_{m,\gamma_*,k}$ has no embedded eigenvalue for $m<m_*$. Hence, $c_\infty \notin R$. Hence, $c_{\infty} \in S\setminus R$. Therefore, $\mathcal{M}$ must be  closed. Using the openness and closedness property of $\mathcal{M}$ along with the fact that $0\notin \mathcal{M}$, we conclude that $\mathcal{M}$ has to be empty. The proof is now complete.\qedhere
\end{proof}

\begin{proof}[\textbf{Proof of Theorem~\ref{linear}\ref{Instability} and estimate \eqref{estimate for lambda r}}]
 From the proof of Theorem~\ref{linear}~\ref{Neutral mode}, we know that  there exist $m_*$ and $\gamma_*$ such that $\lambda_{m_*,\gamma_*}=-1$. This fact in combination with Lemma~\ref{monotonicity of lambda m gamma} tells us that for any $m>m_*$, we have $\lambda_{m,\gamma_*}<\lambda_{m_*,\gamma_*}=-1$ which together with the existence of growing mode result for the $\mathcal{K}^+-$flow proved by Lin \cite{Lin2003} gives the existence of the eigenvalues of $\mathcal{R}_{m,\gamma,1}$. In the present work, we give a new proof of such existence which helps us understand how the growing modes emerge with respect to the parameter $m$. This is in fact the main motivation on why we work with the Wronskian $\mathcal{W}$. By Remark~\ref{TFAE statement}, it reduces to finding the zeros of $\mathcal{W}$. If a given zero is an interior point, one would use the Implicit Function Theorem (IFT) to construct a curve of solutions branching out from such interior point. However, the given zero $(0,0)=(c_r(m_*),c_i(m_*))$ here is at the boundary of the domain $\{(c_r,c_i):c_i\geq 0\}$. This obstructs a direct use of IFT. As an alternative, we first use an ODE argument to find an interior zero point in the aforementioned domain. Following that, we then apply the IFT to show the existence of a curve of the zeros of $\mathcal{W}$ originating from the interior zero point. Finally, we derive some estimates to guarantee that the constructed curve will not loop back and touch the boundary.
\begin{comment}
Let $c=c_r+ic_i$. To show the existence of unstable modes of the Rayleigh operator $\mathcal{L}_{m,\gamma}$ for $m>m_*$, it suffices to prove that
\begin{itemize}
    \item $(\partial_{c_i} W_i)^2+(\partial_{c_i} W_r)^2 \approx \frac{1}{\gamma}$,
    \item $\frac{\gamma}{C}\leq \partial_{m} c_i \leq C \gamma$ and $0\leq \partial_{m} |c_r| \leq C \gamma$,
    \item $\frac{|c_r|}{c_i}\leq C$,
\end{itemize}
\end{comment}
Indeed we will show that for a given $m>m_*$, the zero of $\mathcal{W}$ belongs to the set
\begin{equation}\label{set sigma}
    \Sigma:=\{(c_r,c_i):|c_r|\leq C^2 \gamma \epsilon_0  \text{ and } 0\leq c_i \leq C \gamma \epsilon_0\},
\end{equation}
for sufficiently small $\gamma$ and $\epsilon_0$. A direct calculation tells us that if the following system of differential equations
\begin{equation}\label{ODE}
\begin{aligned}
    &-\dfrac{d c_i}{d m}=\dfrac{\partial_{c_i}\mathcal{W}_r\partial_m \mathcal{W}_r+\partial_{c_i}\mathcal{W}_i \partial_m \mathcal{W}_i}{(\partial_{c_i}\mathcal{W}_i)^2+(\partial_{c_i}\mathcal{W}_r)^2}=:-F(y,m,\gamma,c,-1),\\&
    -\dfrac{ d c_r}{dm}=\dfrac{\partial_{c_i}\mathcal{W}_i\partial_m \mathcal{W}_r-\partial_{c_i}\mathcal{W}_r \partial_m \mathcal{W}_i}{(\partial_{c_i}\mathcal{W}_i)^2+(\partial_{c_i}\mathcal{W}_r)^2}=:G(y,m,\gamma,c,-1)
\end{aligned}
\end{equation}
has a solution inside the set $\Sigma$, then we obtain a curve $(c_r(m),c_i(m))$ that starts from $(0,0)=(c_r(m_*),c_i(m_*))$ along which the Wronskian $\mathcal{W}(m,\gamma,c,-1)=0$. Due to the fact that $F$ and $G$ are Lipschitz in $c$, a standard ODE argument gives us the existence of solution to the system above. It now remains to show that such a solution lies inside $\Sigma$. To accomplish that, we need the following  priori estimates: $C^{-1}\gamma\leq F\leq C \gamma$ and $|G|\leq C \gamma$ for $(c_r,c_i)\in \Sigma$. We have divided our computations in three steps.

As a starter, we define the coupling between $\phi_1$ and $\phi_2$ as follows
\[
\phi_1^{E}(y,m,\gamma,c,\lambda):=\phi_1(y,m,\gamma,c_r,\lambda)\phi_2(y,m,\gamma,c,\lambda),
\]
where $\phi_1$ and $\phi_2$ are as stated in Lemma~\ref{phi1 phi2}.
Hence, the modified Wronskian takes the form
 \begin{equation}\label{modified wronskian at cr 0}
     W(m,\gamma,c,-1)=\underbrace{\int_{-1}^1\dfrac{1}{(U_{m,\gamma}(y)-c)^2}\;dy}_{\text{I}} + \underbrace{\int_{-1}^1\dfrac{1}{(U_{m,\gamma}(y)-c)^2} \Bigg( \dfrac{1}{(\phi_1^E)^2(y,m,\gamma,c,-1)}-1\Bigg)\;dy}_{\text{II}},
\end{equation}
where $c \notin \text{Ran}(U_{m,\gamma}).$

As $W$ is a complex-valued function, it is natural to recast it in the form $W=W_{r}+iW_{i}$ where $W_{r}$ is its real part while $W_{i}$ is its imaginary part. More precisely, in light of the splitting in \eqref{modified wronskian at cr 0}, we have
 \[
 W_r=\text{I}_r+\text{II}_r,\quad W_i=\text{I}_i+\text{II}_i \text{ with } \text{I}=\text{I}_r+i\text{I}_i \text{ and } \text{II}=\text{II}_r+i\text{II}_i.
 \]
 For later use, we re-express $\text{I}$ in terms of its real and imaginary parts as follows
\begin{equation}\label{Expression of I}
     \begin{aligned}
     \text{I}&=\dfrac{-1}{(U_{m,\gamma}(y)-c)U'_{m,\gamma}(y)}\bigg|_{-1}^{1}+\int_{U_{m,\gamma}(-1)}^{U_{m,\gamma}(1)}\dfrac{\partial^2_v(U_{m,\gamma}^{-1})(v)}{(v-c_r-ic_i)}\;dv\\&
        =\dfrac{-(U_{m,\gamma}(y)-c_r)}{((U_{m,\gamma}(y)-c_r)^2+c_i^2)U'_{m,\gamma}(y)}\bigg|_{-1}^{1}+\int_{U_{m,\gamma}(-1)}^{U_{m,\gamma}(1)}\dfrac{(v-c_r)\partial^2_v(U_{m,\gamma}^{-1})(v)}{((v-c_r)^2+c_i^2)}\;dv\\&
        \qquad +i\bigg(\int_{U_{m,\gamma}(-1)}^{U_{m,\gamma}(1)}\dfrac{c_i\partial^2_v(U_{m,\gamma}^{-1})(v)}{((v-c_r)^2+c_i^2)}\;dv-\dfrac{c_i}{((U_{m,\gamma}(y)-c_r)^2+c_i^2)U'_{m,\gamma}(y)}\bigg|_{-1}^{1}\bigg)
        \\&
        =:\text{I}_r+i\text{I}_i.
        \end{aligned}
\end{equation}
\textbf{STEP 1:} Here, our ultimate goal is to derive inequalities concerning $\partial_{c_i} W$ for its real and imaginary parts. To achieve that, we compute a series of estimates that are recorded in the lemmas below.
\begin{lemma}[Estimate of $W$]\label{estimate of W}
    The modified Wronskian obeys the following estimate
    \[
    |W|\leq C \text{ for all } (c_r,c_i) \in \Sigma.
    \]
\end{lemma}
\begin{proof}
    From \eqref{Expression of I}, the boundary term in the expression of $\text{I}_r$ is clearly bounded above by some constant $C$. Similarly, the remaining integral term in $\text{I}_r$ can be estimated using Lemma~\ref{Supremum estimate}
    \[
    \begin{aligned}
    \bigg|\int_{U_{m,\gamma}(-1)}^{U_{m,\gamma}(1)}\dfrac{(v-c_r)\partial^2_v(U_{m,\gamma}^{-1})(v)}{((v-c_r)^2+c_i^2)}\;dv\bigg|&\lesssim \norm{\partial^2_v(U_{m,\gamma}^{-1})(v)}_{L^2}^{1/2}\bigg(\norm{\partial^3_v(U_{m,\gamma}^{-1})(v)}_{L^2}^{1/2}+\norm{\partial^2_v(U_{m,\gamma}^{-1})(v)}_{L^\infty}^{1/2}\bigg)\\& \lesssim 1.
    \end{aligned}
    \]
    Hence, $|\text{I}_r|\leq C$. One can repeat the same argument in order to estimate the imaginary part of $\text{I}$ and obtain 
    \[
    |\text{I}_i|\leq C.
    \]
    Furthermore, via the estimates in Lemma~\ref{phi1 phi2}, one can deduce that 
    \[
    |\text{II}| \leq \int_{-1}^1 \dfrac{\min\{c_i,(y-y_c)^2\}}{(y-y_c)^2+(c_i)^2}\;dy\leq C.
    \]
    Combining both estimates from $|\text{I}|$ and $|\text{II}|$ yields the desired inequality for $|W|$. \qedhere 
\end{proof}
\begin{lemma}[Estimate of $\partial_{c_i}\text{II}$]\label{estimate partial ci II}
    The partial derivative of \textup{II} with respect to $c_i$ obeys the following estimate
    \[
    |\partial_{c_i}\textup{II}|\leq C.
    \]
\end{lemma}
\begin{proof}
    To obtain the bounds for $\partial_{c_i}\text{II}$, we first define the so-called ``good" derivative
\begin{equation}\label{good derivative}
    \partial_G:=\bigg(\partial_{c_i}+i\dfrac{\partial_y}{U_{m,\gamma}'(y_c)}\bigg),
\end{equation}
where $y_c=U_{m,\gamma}^{-1}(c_r)$.
With this in mind, in terms of the good derivative we can express $\partial_{c_i}$ as follows
\[
 \partial_{c_i}:=\partial_G -i\dfrac{\partial_y}{U_{m,\gamma}'(y_c)}.
\]

Let us now derive an estimate for $\partial_{c_i}\text{II}$. In order to accomplish this, we need to use the results in Lemma~\ref{phi1 phi2}-Lemma~\ref{partial m phi1}. It is not hard to show that
\[
\begin{aligned}
\partial_{c_i}\text{II}&=\text{II}'_1+\text{II}'_2-i\dfrac{1}{U'_{m,\gamma}(y_c)}\Bigg( \dfrac{1}{(U_{m,\gamma}(y)-c_r-ic_i)^2} \bigg( \dfrac{1}{(\phi_1^E)^2(y,m,\gamma,c_r+ic_i,-1)}-1\bigg)\Bigg)\Bigg|_{-1}^{1},
\end{aligned}
\]
where
\[
\begin{aligned}
&\text{II}'_1=\int_{-1}^1\bigg( \dfrac{1}{(\phi_1^E)^2(y,m,\gamma,c_r+ic_i,-1)}-1\bigg)\Bigg[\partial_G \bigg( \dfrac{1}{(U_{m,\gamma}(y)-c_r-ic_i)^2} \bigg)\Bigg]\;dy, \\
&\text{II}'_2=\int_{-1}^1\bigg( \dfrac{1}{(U_{m,\gamma}(y)-c_r-ic_i)^2} \bigg)\Bigg[\partial_G \bigg( \dfrac{1}{(\phi_1^E)^2(y,m,\gamma,c_r+ic_i,-1)}-1\bigg)\Bigg]\;dy.
\end{aligned}
\]

The boundary term in the expression of $\partial_{c_i}\text{II}$  is finite. In other words,
\[
\bigg| \dfrac{1}{U'_{m,\gamma}(y_c)}\Bigg( \dfrac{1}{(U_{m,\gamma}(y)-c_r-ic_i)^2} \bigg(\dfrac{1}{(\phi_1^E)^2(y,m,\gamma,c_r+ic_i,-1)}-1\bigg)\Bigg)\bigg|_{-1}^{1}\bigg| < C.
\]
The term in the integrand of $\text{II}'_1$ involving $\partial_G$ can be estimated as follows
\[
\begin{aligned}
\bigg|\partial_G \bigg( \dfrac{1}{(U_{m,\gamma}(y)-c_r-ic_i)^2}\bigg)\bigg|&=\bigg|\dfrac{2\bigg(-i+i\dfrac{U'_{m,\gamma}(y)}{U'_{m,\gamma}(y_c)}\bigg)}{(U_{m,\gamma}(y)-c_r-ic_i)^{3}}\bigg|\\&\leq C\dfrac{|y-y_c|\lVert U''_{m,\gamma}(y)\rVert_{L^{\infty}}}{((U_{m,\gamma}(y)-c_r)^2+(c_i)^2)^{3/2}}\\&\leq C\dfrac{\lVert U''_{m,\gamma}(y)\rVert_{L^{\infty}}}{(y-y_c)^2+(c_i)^2}.
\end{aligned}
\]

Via the above inequality along with estimates in Lemma~\ref{phi1 phi2}, we obtain
\[
|\text{II}'_1| \leq C \int_{-1}^{1}\dfrac{\min\{1,(y-y_c)^2\}}{(y-y_c)^2+(c_i)^2}\;dy,
\]
from which we can say that $|\text{II}'_1|\leq C$. 

Next, we shall show, similarly, that  $\text{II}'_2$ is also uniformly bounded by a constant independent of $m,\gamma,c_r, c_i$. We begin by obtaining an estimate for $\partial_G((1-(\phi_1^E)^2)/(\phi_1^E)^2)$ which explicitly (after an expansion) reads
\[
\begin{aligned}
\partial_G\Big(\dfrac{1}{(\phi_1^E)^2(y,m,\gamma,c_r+ic_i,-1)}-1\Big)&=\dfrac{-2\partial_G\phi_1^E(y,m,\gamma,c_r+ic_i,-1)}{(\phi_1^E)^3(y,m,\gamma,c_r+ic_i,-1)}\\&
=\dfrac{-2\partial_G\phi_1^E(y,m,\gamma,c_r+ic_i,-1)}{(\phi_1^E(y,m,\gamma,c_r+ic_i,-1))}\dfrac{1}{(\phi_1^E)^2(y,m,\gamma,c_r+ic_i,-1)}\\&
\leq C (y-y_c)^2
\end{aligned}
\]
where we have applied Lemma~\ref{phi1 phi2} and Lemma~\ref{partial G phi1} to obtain the last line in the above inequality. Therefore, we have
\[
|\text{II}'_2|\leq C \int_{-1}^{1}\dfrac{(y-y_c)^2}{(y-y_c)^2+(c_i)^2}\;dy\leq C.
\]   
Combining all the estimates, we can say that 
\begin{equation} \label{partial ci II}|\partial_{c_i}\text{II}|<C.  \qedhere
\end{equation}
\end{proof}

We use the result of the previous lemma to derive an estimate for $\partial_{c_i}W_i$ which is recorded below.

\begin{lemma}[Estimate on $\partial_{c_i}W_i$] \label{estimate partial ci Wi}
    The partial derivative of the Imaginary part of the Wronskian, namely $W_i$, with respect to $c_i$ obeys the following estimate
    \[
    |\partial_{c_i}W_i|\leq \dfrac{C}{\gamma}\text{ for all } (c_r,c_i) \in \Sigma.
    \]
\end{lemma}
\begin{proof}

Differentiating $\text{I}_i$ with respect to $c_i$ yields
 \[
 \begin{aligned}
     \partial_{c_i}\text{I}_i&=\int_{U_{m,\gamma}(-1)}^{U_{m,\gamma}(1)} \partial_{c_i}\Big(\dfrac{c_i}{((v-c_r)^2+c_i^2)}\Big)\partial^2_v(U_{m,\gamma}^{-1})(v)\;dv-\partial_{c_i}\bigg(\dfrac{c_i}{((U_{m,\gamma}(y)-c_r)^2+c_i^2)U'_{m,\gamma}(y)}\bigg|_{-1}^{1}\bigg)\\&
     =\int_{U_{m,\gamma}(-1)}^{U_{m,\gamma}(1)} \partial_{v}\Big(\dfrac{-(v-c_r)}{((v-c_r)^2+c_i^2)}\Big)\partial^2_v(U_{m,\gamma}^{-1})(v)\;dv\\& \qquad \qquad \qquad \qquad \qquad \qquad-\bigg(\dfrac{((U_{m,\gamma}(y)-c_r)^2+c_i^2)U'_{m,\gamma}(y)-2c_i^2(U'_{m,\gamma}(y))}{((U_{m,\gamma}(y)-c_r)^2+c_i^2)^2(U'_{m,\gamma}(y))^2}\bigg|_{-1}^{1}\bigg)\\&
     =\int_{U_{m,\gamma}(-1)}^{U_{m,\gamma}(1)} \dfrac{(v-c_r)}{((v-c_r)^2+c_i^2)}\partial^3_v(U_{m,\gamma}^{-1})(v)\;dv+\dfrac{(U_{m,\gamma}(y)-c_r)}{((U_{m,\gamma}(y)-c_r)^2+c_i^2)}\dfrac{U''_{m,\gamma}(y)}{(U'_{m,\gamma})^3(y)}\bigg|_{-1}^{1}\\&
     \qquad -\bigg(\dfrac{((U_{m,\gamma}(y)-c_r)^2+c_i^2)U'_{m,\gamma}(y)-2c_i^2(U'_{m,\gamma}(y))}{((U_{m,\gamma}(y)-c_r)^2+c_i^2)^2(U'_{m,\gamma}(y))^2}\bigg|_{-1}^{1}\bigg).
 \end{aligned}
 \]
{Applying Lemma~\ref{Supremum estimate}} shows that the first integral in the expression of $\partial_{c_i}\text{I}_i$ obeys the estimate below

\begin{equation}\label{partial ci imaginary I}
\begin{aligned}
\int_{U_{m,\gamma}(-1)}^{U_{m,\gamma}(1)} \dfrac{(v-c_r)\partial^3_v(U_{m,\gamma}^{-1})(v)}{((v-c_r)^2+c_i^2)}\;dv&\leq \sup_{(v,c_i)\in \Sigma}\bigg|\int_{U_{m,\gamma}(-1)}^{U_{m,\gamma}(1)} \dfrac{1}{v-c_r-ic_i}\partial^3_v(U_{m,\gamma}^{-1})(v)\;dv\bigg|\\&\lesssim \norm{\partial^3_v(U_{m,\gamma}^{-1})(v)}_{L^2}^{1/2} \bigg(\norm{\partial^4_v(U_{m,\gamma}^{-1})(v)}_{L^2}^{1/2} + \norm{\partial^3_v(U_{m,\gamma}^{-1})(v)}_{L^\infty}^{1/2} \bigg)\\&
\lesssim \dfrac{1}{\gamma}.
\end{aligned}
\end{equation}

It therefore remains to control the boundary terms in the expression of $\partial_{c_i}\text{I}_i$. It is straightforward to see that they are indeed bounded uniformly in $\gamma$. Hence, gathering all the estimates together results in 
\[
|\partial_{c_i}\textup{I}_i|\lesssim \dfrac{1}
{\gamma}.
\]
Via the estimate in Lemma~\ref{estimate partial ci II}, we conclude that 
\[
|\partial_{c_i} W_i|=|\partial_{c_i} \textup{I}_i+\partial_{c_i} \textup{II}_i|\leq |\partial_{c_i} \textup{I}_i|+|\partial_{c_i} \textup{II}| \leq \dfrac{C}{\gamma}. \qedhere
\]
\end{proof}

Having derived an estimate for $\partial_{c_i}W_i$, we now proceed and present similar estimates but for $\partial_{c_i}W_r$. We record it in the lemma below.
\begin{lemma}[Estimate on $\partial_{c_i} W_r$]\label{estimate partial ci Wr}
    The partial derivative of the real part of the Wronskian, namely $W_r$, with respect to $c_i$ obeys the following estimate
    \[
    \dfrac{-1}{C\gamma}\leq \partial_{c_i}W_r\leq \dfrac{-C}{\gamma}\text{ for all } (c_r,c_i) \in \Sigma.
    \]
\end{lemma}
\begin{proof}

   Notice that \[
 \begin{aligned}
     \partial_{c_i}\text{I}_r&=\partial_{c_i}\bigg(\dfrac{-(U_{m,\gamma}(y)-c_r)}{((U_{m,\gamma}(y)-c_r)^2+c_i^2)U'_{m,\gamma}(y)}\bigg|_{-1}^{1}\bigg)+\int_{U_{m,\gamma}(-1)}^{U_{m,\gamma}(1)}\partial_{c_i}\Big(\dfrac{(v-c_r)}{((v-c_r)^2+c_i^2)}\Big)\partial^2_v(U_{m,\gamma}^{-1})(v)\;dv\\&
     =\bigg(\dfrac{c_i(U_{m,\gamma}(y)-c_r)2U'_{m,\gamma}(y)}{((U_{m,\gamma}(y)-c_r)^2+c_i^2)^2(U'_{m,\gamma}(y))^2}\bigg|_{-1}^{1}\bigg)+\int_{U_{m,\gamma}(-1)}^{U_{m,\gamma}(1)}\partial_{v}\Big(\dfrac{c_i}{((v-c_r)^2+c_i^2)}\Big)\partial^2_v(U_{m,\gamma}^{-1})(v)\;dv\\&
     :=\partial_{c_i}\text{I}^1_r+\partial_{c_i}\text{I}^2_r.
 \end{aligned}
 \]
 It is straightforward to see using the assumption that $c_i \approx \gamma$ and upon evaluation at $y=-1,1$, the boundary term $\partial_{c_i}\text{I}^1_r$ above can be estimated as follows
 \begin{equation}\label{partial ci I 1 r}
     C^{-1}\gamma\leq \partial_{c_i}\text{I}^1_r \leq C\gamma. 
 \end{equation}

Additionally, the term $\partial_{c_i}\text{I}^2_r$ satisfies the inequality 
 \begin{equation}\label{sign of partial c_i real part of I}
\begin{aligned}  &|\partial_{c_i}\text{I}^2_r+\pi \partial_v^3(U^{-1}_{m,\gamma})(c_r)|\\&\leq \int_{|v-c_r|\leq K_0c_i}\bigg|\dfrac{c_i}{(v-c_r)^2+c_i^2}\bigg(\partial^3_v(U_{m,\gamma}^{-1})(v)-\partial^3_v(U_{m,\gamma}^{-1})(c_r)\bigg)\bigg|\;dv\\&
\qquad +\int_{|v-c_r|\geq K_0c_i}\bigg|\dfrac{c_i}{(v-c_r)^2+c_i^2}\partial^3_v(U_{m,\gamma}^{-1})(c_r)\bigg|\;dv\\&
    \qquad +\int_{|v-c_r|\geq K_0c_i}\bigg|\dfrac{c_i}{(v-c_r)^2+c_i^2}\partial^3_v(U_{m,\gamma}^{-1})(v)\bigg|\;dv +\bigg|\dfrac{c_iU''_{m,\gamma}(y)}{((U_{m,\gamma}(y)-c_r)^2+c_i^2)(U'_{m,\gamma}(y))^3}\Big|_{y=-1}^{1}\bigg|\\&
    \leq \bigg(\int_{|v-c_r|\leq K_0c_i}\Big|\dfrac{c_i(v-c_r)}{(v-c_r)^2+c_i^2}\Big|^2\;dv\bigg)^{1/2} \bigg(\int_{|v-c_r|\leq K_0c_i} |\partial^4_v(U_{m,\gamma}^{-1})(v)|^2\;dv\bigg)^{1/2}+C\Big|\frac{\pi}{2}-\arctan(K_0)\Big|\frac{1}{\gamma}\\&
    \qquad 
    +\bigg|\dfrac{c_iU''_{m,\gamma}(y)}{((U_{m,\gamma}(y)-c_r)^2+c_i^2)(U'_{m,\gamma}(y))^3}\Big|_{y=-1}^{1}\bigg|\\
    &\leq C|K_0c_i^2|^{\frac12}\gamma^{-\frac{3}{2}}+C\Big|\frac{\pi}{2}-\arctan(K_0)\Big|\frac{1}{\gamma}+C|c_i|.
\end{aligned}
\end{equation}
Moreover, one may observe that 
\begin{equation*}
\begin{aligned}
&\Big|\partial_v^3U_{m,\gamma}^{-1}(c_r)-\partial_v^3U_{m,\gamma}^{-1}(0) \Big|\leq \Big| \int_{0}^{c_r} \partial_v^4 U_{m,\gamma}^{-1}(v)\; dv\Big|\\& 
\leq |y_c| \norm{\dfrac{-U_{m,\gamma}^{(4)}(y)}{(U'_{m,\gamma}(y))^4}+\dfrac{4U_{m,\gamma}^{'''}(y)U_{m,\gamma}^{''}(y)}{(U_{m,\gamma}^{'}(y))^5}+\dfrac{6U_{m,\gamma}^{''}(y)U_{m,\gamma}^{'''}(y)}{(U_{m,\gamma}^{'}(y))^5}-\dfrac{15(U_{m,\gamma}^{'''}(y))^3}{(U_{m,\gamma}^{'}(y))^6}}_{L^{\infty}}\\&
\leq C \frac{|y_c|}{\gamma^2}.
\end{aligned}
\end{equation*}
Now, recalling the definition of $U_{m,\gamma}$ in \eqref{perturbed shear}, $|y_c|\leq \epsilon_0 \gamma$ with $\epsilon_0 << \gamma$ and choosing $\gamma$ small enough, we can then infer that
\begin{equation}
\begin{aligned}
\Big|\partial_v^3U_{m,\gamma}^{-1}(c_r)-\partial_v^3U_{m,\gamma}^{-1}(0) \Big|&\leq C\frac{\epsilon_0}{\gamma}.
\end{aligned}
\end{equation}

Using the above inequality and \eqref{sign of partial c_i real part of I} along with taking $K_0$ to be sufficiently large and letting $c_i<\epsilon_0 K_0 \gamma$, we can infer that  
\[
   \partial_{c_i}\text{I}^2_r \leq -\pi \partial_v^3U_{m,\gamma}^{-1}(0)+ \pi |\partial_v^3U_{m,\gamma}^{-1}(c_r)-\partial_v^3U_{m,\gamma}^{-1}(0)| \leq -\pi\frac{1+C\epsilon_0}{\gamma} \leq -\frac{C}{\gamma},
\]
\[
\partial_{c_i}\text{I}^2_r \geq -\pi \partial_v^3U_{m,\gamma}^{-1}(0)- \pi |\partial_v^3U_{m,\gamma}^{-1}(c_r)-\partial_v^3U_{m,\gamma}^{-1}(0)| \geq -\pi\frac{1-C\epsilon_0}{\gamma} \leq -\frac{C^{-1}}{\gamma}.
\]
Finally, combining the above estimates with \eqref{partial ci I 1 r}, we conclude that

\begin{equation}\label{partial ci real I}
\dfrac{-C^{-1}}{\gamma}\leq \partial_{c_i}\text{I}_r \leq \dfrac{-C}{\gamma}.
\end{equation}

Via the estimate in Lemma~\ref{estimate partial ci II}, we arrive at
\[
\dfrac{-1}{C \gamma}\leq \partial_{c_i} W_r=\partial_{c_i} \textup{I}_r+\partial_{c_i} \textup{II}_r\leq \dfrac{-C}{\gamma}. \qedhere
\]
\end{proof}

\textbf{Step 2:}
To reach our goal in proving Theorem~\ref{linear}\ref{Instability} and the estimate displayed in \eqref{estimate for lambda r}, it remains to derive estimates for $\partial_m W_r$ and $\partial_m W_i$.  
Recall that for any $(m,c)=(m,c_r+ic_i)$, the modified Wronskian is given by the expression in \eqref{modified wronskian at cr 0}.

%estimate on partial_m W(ic_i,\lambda)
In order to get an estimate of $\partial_m W(m,\gamma,
c_r+ic_i,-1)$, we alternatively recast the modified Wronskian in a different way, that is

\begin{equation}\label{partial m W}
  \begin{aligned}
 W(m,\gamma,c_r+ic_i,-1)&= \bigg(\dfrac{-1}{U'_{m,\gamma}(y_c)} \int_{-1}^{1} \dfrac{U'_{m,\gamma}(y)-U_{m,\gamma}'(y_c)}{(U_{m,\gamma}(y)-U_{m,\gamma}(y_c)-ic_i)^2}\;dy\bigg)\\& \quad -\dfrac{1}{U'_{m,\gamma}(y_c)}\bigg[\dfrac{1}{U_{m,\gamma}(1)-U_{m,\gamma}(y_c)-ic_i}-\dfrac{1}{U_{m,\gamma}(-1)-U_{m,\gamma}(y_c)-ic_i}\bigg]\\&
 \quad+ \bigg(\int_{-1}^1 \dfrac{1}{(U_{m,\gamma}(y)-U_{m,\gamma}(y_c)-ic_i)^2}\bigg(\dfrac{1}{(\phi_1^E)^2(y,m,\gamma,c,-1)}-1\bigg)\;dy\bigg).
\end{aligned}  
\end{equation}
Taking a partial derivative with respect to $m$ yields
\[
\partial_m W(m,\gamma,c_r+ic_i,-1)= \mathfrak{W}^1+\mathfrak{W}^2+\mathfrak{W}^3+\mathfrak{W}^4,
\]
where the definition of each term $\mathfrak{W}^i$ with $ i=1,2,3,4,$ is given below
\[
\begin{aligned}
\mathfrak{W}^1&:=\partial_m\bigg(\dfrac{-1}{U'_{m,\gamma}(y_c)} \int_{-1}^{1} \dfrac{U'_{m,\gamma}(y)-U_{m,\gamma}'(y_c)}{(U_{m,\gamma}(y)-U_{m,\gamma}(y_c)-ic_i)^2}\;dy\bigg),\\
\mathfrak{W}^2&:=\partial_m\bigg(\dfrac{1}{U'_{m,\gamma}(y_c)}\bigg)\bigg[\dfrac{1}{U_{m,\gamma}(1)-U_{m,\gamma}(y_c)-ic_i}-\dfrac{1}{U_{m,\gamma}(-1)-U_{m,\gamma}(y_c)-ic_i}\bigg],\\
\mathfrak{W}^3&:=\dfrac{1}{U'_{m,\gamma}(y_c)}\partial_m\bigg[\dfrac{1}{U_{m,\gamma}(1)-U_{m,\gamma}(y_c)-ic_i}-\dfrac{1}{U_{m,\gamma}(-1)-U_{m,\gamma}(y_c)-ic_i}\bigg],\\
\mathfrak{W}^4&:=\partial_m \bigg(\int_{-1}^1 \dfrac{1}{(U_{m,\gamma}(y)-U_{m,\gamma}(y_c)-ic_i)^2}\bigg(\dfrac{1}{(\phi_1^E)^2(y,m,\gamma,c_r+ic_i,-1)}-1\bigg)\;dy\bigg).
\end{aligned}
\]

It is important to note that in connection with the splitting of $W$ in \eqref{modified wronskian at cr 0}, we have
\begin{equation}\label{partial m I}
\mathfrak{W}^1+\mathfrak{W}^2+\mathfrak{W}^3=\partial_m \textup{I},
\end{equation}
and 
\begin{equation}
\mathfrak{W}^4 =\partial_m \textup{II}.
\end{equation}
With this in mind, we derive estimates for $\partial_m\textup{I}$ and $\partial_m\textup{II}$ which are recorded in the next two lemmas.

\begin{lemma}[Estimate on $\partial_{m} \textup{I}$]\label{estimate partial m I}
    The partial derivative of $\textup{I}$ with respect to $m$ obeys the following estimate
    \[
    C^{-1}\leq \partial_m \textup{I}_r \leq C, \quad  |\partial_{m} \textup{I}_i|\leq C \epsilon_0, \text{ for all } (c_r,c_i) \in \Sigma.
    \]
\end{lemma}
\begin{proof}
We start by analyzing the boundary terms $\mathfrak{W}^2$ and $\mathfrak{W}^3$. In order to get an estimate for $\mathfrak{W}^2$, we first rewrite it as 
\[
\mathfrak{W}^2=\dfrac{-\gamma\Gamma(y_c/\gamma)}{(U'_{m,\gamma}(y_c))^2}\bigg[\dfrac{U_{m,\gamma}(y)-U_{m,\gamma}(y_c)+ic_i}{(U_{m,\gamma}(y)-U_{m,\gamma}(y_c))^2+(c_i)^2}\bigg]\bigg|_{-1}^{1}.
\]
It is not hard to see that we obtain the following bound
\[
|\mathfrak{W}^2|\leq C\gamma,
\]
where we have used the differentiability of $U_{m,\gamma}$ and the fact that $y_c$ is away from $1$ and $-1$.
The next term we shall discuss is $\mathfrak{W}^3$. Notice that
\[
\begin{aligned}
\mathfrak{W}^3&=\dfrac{\gamma^2}{U'_{m,\gamma}(y_c)}\bigg[\dfrac{\Big(\widetilde{\Gamma}(y/\gamma)-\widetilde{\Gamma}(y_c/\gamma)\Big)}{\Big(U_{m,\gamma}(y)-U_{m,\gamma}(y_c)\Big)^2+(c_i)^2}\\&
\qquad \qquad \quad-\dfrac{2\Big(\widetilde{\Gamma}(y/\gamma)-\widetilde{\Gamma}(y_c/\gamma)\Big)\Big(U_{m,\gamma}(y)-U_{m,\gamma}(y_c)\Big) \Big(U_{m,\gamma}(y)-U_{m,\gamma}(y_c)+ic_i\Big)}{\Big((U_{m,\gamma}(y)-U_{m,\gamma}(y_c))^2+(c_i)^2\Big)^2}\bigg]\bigg|_{-1}^{1}.
\end{aligned}
\]
Again, employing the same argument as in estimating $\mathfrak{W}^2$, we can infer that 
\[
|\mathfrak{W}^3|\leq C\gamma^2.
\]

Now, we direct our attention to $\mathfrak{W}^1$. Via integration by parts, $\mathfrak{W}^1$ can be written in the following way
\[
\begin{aligned}
    \mathfrak{W}^1&=\partial_m\Bigg(\dfrac{-1}{U'_{m,\gamma}(y_c)} \int_{-1}^{1} \dfrac{U'_{m,\gamma}(y)-U_{m,\gamma}'(y_c)}{(U_{m,\gamma}(y)-U_{m,\gamma}(y_c)-ic_i)^2}\;dy\Bigg)\\&=\partial_m\Bigg(\dfrac{1}{U'_{m,\gamma}(y_c)}\bigg[\dfrac{(U_{m,\gamma}(y)-U_{m,\gamma}(y_c)+ic_i)}{(U_{m,\gamma}(y)-U_{m,\gamma}(y_c))^2+(c_i)^2}\bigg(\dfrac{U'_{m,\gamma}(y)-U'_{m,\gamma}(y_c)}{U'_{m,\gamma}(y)}\bigg)\bigg|_{-1}^{1}\bigg]\\&
    \qquad-\dfrac{1}{U'_{m,\gamma}(y_c)}\int_{-1}^{1} \dfrac{U'_{m,\gamma}(y_c)U''_{m,\gamma}(y)(U_{m,\gamma}(y)-U_{m,\gamma}(y_c)+ic_i)}{(U'_{m,\gamma}(y))^2((U_{m,\gamma}(y)-U_{m,\gamma}(y_c))^2+(c_i)^2)}\;dy\Bigg).
\end{aligned}
\]
As a consequence, we can subdivide $\mathfrak{W}^1$ into three different parts, namely 
\[
\begin{aligned}
\mathfrak{W}^1=\mathfrak{W}^1_1+\mathfrak{W}^1_2+\mathfrak{W}^1_3,
\end{aligned}
\]
where
\begin{align*}
&\mathfrak{W}^1_1=\dfrac{\gamma \Gamma(y_c)}{(U'_{m,\gamma}(y_c))^2}\bigg[\dfrac{(U_{m,\gamma}(y)-U_{m,\gamma}(y_c)+ic_i)}{(U_{m,\gamma}(y)-U_{m,\gamma}(y_c))^2+(c_i)^2}\bigg(\dfrac{U'_{m,\gamma}(y)-U'_{m,\gamma}(y_c)}{U'_{m,\gamma}(y)}\bigg)\bigg|_{-1}^{1}\bigg]\\&
+\dfrac{\gamma}{U'_{m,\gamma}(y_c)}\Bigg[\dfrac{(\gamma\widetilde{\Gamma}(y/\gamma)-\gamma\widetilde{\Gamma}(y_c/\gamma))(U'_{m,\gamma}(y)-U'_{m,\gamma}(y_c))}{((U_{m,\gamma}(y)-U_{m,\gamma}(y_c))^2+(c_i)^2)(U'_{m,\gamma}(y))}\\&+\dfrac{(U_{m,\gamma}(y)-U_{m,\gamma}(y_c)+ic_i)(\Gamma(y/\gamma)-\Gamma(y_c/\gamma))}{((U_{m,\gamma}(y)-U_{m,\gamma}(y_c))^2+(c_i)^2)(U'_{m,\gamma}(y))}\\&
 +\dfrac{\Big((2\gamma \widetilde{\Gamma}(y/\gamma)-2\gamma \widetilde{\Gamma}(y_c/\gamma))(U_{m,\gamma}(y)-U_{m,\gamma}(y_c))\Big(U_{m,\gamma}(y)-U_{m,\gamma}(y_c)+ic_i\Big)\Big(U'_{m,\gamma}(y)-U'_{m,\gamma}(y_c)\Big)}{((U_{m,\gamma}(y)-U_{m,\gamma}(y_c))^2+(c_i)^2)^2(U'_{m,\gamma}(y))}\\&
 +\dfrac{\Gamma(y/\gamma)((U_{m,\gamma}(y)-U_{m,\gamma}(y_c))^2+(c_i)^2)\Big(U_{m,\gamma}(y)-U_{m,\gamma}(y_c)+ic_i\Big)\Big(U'_{m,\gamma}(y)-U'_{m,\gamma}(y_c)\Big)}{((U_{m,\gamma}(y)-U_{m,\gamma}(y_c))^2+(c_i)^2)^2(U'_{m,\gamma}(y))^2}\Bigg|_{-1}^{1}\Bigg],\\&
\mathfrak{W}^1_2= -\dfrac{\gamma}{U'_{m,\gamma}(y_c)}\int_{-1}^{1}\bigg[\dfrac{\Gamma(y_c/\gamma)U''_{m,\gamma}(y)(U_{m,\gamma}(y)-U_{m,\gamma}(y_c)+ic_i)}{(U'_{m,\gamma}(y))^2((U_{m,\gamma}(y)-U_{m,\gamma}(y_c))^2+(c_i)^2)}\\&\qquad\qquad \qquad \qquad \qquad \qquad+\dfrac{U'_{m,\gamma}(y_c)U''_{m,\gamma}(y)(\gamma\widetilde{\Gamma}(y/\gamma)-\gamma\widetilde{\Gamma}(y_c/\gamma))}{(U'_{m,\gamma}(y))^2((U_{m,\gamma}(y)-U_{m,\gamma}(y_c))^2+(c_i)^2)}\\&
\qquad -\dfrac{2\gamma (\widetilde{\Gamma}(y/\gamma)-\widetilde{\Gamma}(y_c/\gamma))(U_{m,\gamma}(y)-U_{m,\gamma}(y_c))U'_{m,\gamma}(y_c)U''_{m,\gamma}(y)(U_{m,\gamma}(y)-U_{m,\gamma}(y_c)+ic_i)}{((U_{m,\gamma}(y)-U_{m,\gamma}(y_c))^2+(c_i)^2)^4(U'_{m,\gamma}(y))^2}\\&
\qquad-\dfrac{2\gamma \Gamma(y/\gamma)((U_{m,\gamma}(y)-U_{m,\gamma}(y_c))^2+(c_i)^2)U'_{m,\gamma}(y_c)U''_{m,\gamma}(y)(U_{m,\gamma}(y)-U_{m,\gamma}(y_c)+ic_i)}{((U_{m,\gamma}(y)-U_{m,\gamma}(y_c))^2+(c_i)^2)^4(U'_{m,\gamma}(y))^3} \bigg]\;dy,\\&
\mathfrak{W}^1_3=-\dfrac{1}{U'_{m,\gamma}(y_c)}\int_{-1}^{1}\dfrac{U'_{m,\gamma}(y_c)\Gamma'(y/\gamma)(U_{m,\gamma}(y)-U_{m,\gamma}(y_c)+ic_i)}{(U'_{m,\gamma}(y))^2((U_{m,\gamma}(y)-U_{m,\gamma}(y_c))^2+(c_i)^2)}\;dy.
\end{align*}

It is not hard to see that the boundary terms in the expression of $\mathfrak{W}^1_1$ yields the estimate
\[
|\mathfrak{W}^1_1|\leq C \gamma.
\]
 In addition to that, via Lemma~\ref{Supremum estimate} and Remark~\ref{remark A7}, the term $\mathfrak{W}^1_2$ obeys the estimate
\[
|\mathfrak{W}^1_2|\leq C \gamma.
\]

It remains to estimate $\mathfrak{W}^1_3$. Notice that $\mathfrak{W}^1_3$ can be decomposed into its real and imaginary parts. which reads
\[
\begin{aligned}
\mathfrak{W}^1_3&=-\int_{-1}^{1}\dfrac{\Gamma'(y/\gamma)(U_{m,\gamma}(y)-U_{m,\gamma}(y_c))}{(U'_{m,\gamma}(y))^2((U_{m,\gamma}(y)-U_{m,\gamma}(y_c))^2+(c_i)^2)}\;dy\\&
\quad-ic_i\int_{-1}^{1}\dfrac{\Gamma'(y/\gamma)}{(U'_{m,\gamma}(y))^2((U_{m,\gamma}(y)-U_{m,\gamma}(y_c))^2+(c_i)^2)}\;dy\\&=:\mathfrak{Re}(\mathfrak{W}^1_3)(y_c)+i\mathfrak{Im}(\mathfrak{W}^1_3)(y_c).
\end{aligned}
\]
Observe that when $y_c=0$, the real part of $\mathfrak{W}^1_3$ reads
\begin{equation}\label{real part of pi 1,3}
\begin{aligned}
\mathfrak{Re}(\mathfrak{W}^1_3)(0)&=-\int_{0\leq|y|<\gamma}\dfrac{\Gamma'(y/\gamma)U_{m,\gamma}(y)}{(U'_{m,\gamma}(y))^2((U_{m,\gamma}(y))^2+(c_i)^2)}\;dy\\& \quad-\int_{1\geq|y|\geq \gamma}\dfrac{\Gamma'(y/\gamma) U_{m,\gamma}(y)}{(U'_{m,\gamma}(y))^2((U_{m,\gamma}(y))^2+(c_i)^2)}\;dy.
\end{aligned}
\end{equation}
The integral first integral of $\mathfrak{Re}(\mathfrak{W}^1_3)(0)$ with interval of integration $0\leq|y|<\gamma$ can be estimated as follows
%We probably need to assume that -C<\Gamma'(y/\gamma)/(y-y_c)/\gamma<0
\[
\begin{aligned}
   C^{-1}&\leq-\frac{1}{\gamma}\int_{\substack{c_i\leq|y|\leq\gamma}}\dfrac{\Gamma'(y/\gamma)}{(y/\gamma)}\;dy\lesssim-\int_{0\leq|y|<\gamma}\dfrac{\Gamma'(y/\gamma)(U_{m,\gamma}(y))}{(U'_{m,\gamma}(y))^2((U_{m,\gamma}(y))^2+(c_i)^2)}\;dy
   \\&\lesssim -\int_{\substack{0\leq|y|<c_i}}\dfrac{\dfrac{\Gamma'(y/\gamma)}{(y/\gamma)}(y^2/\gamma)}{(y)^2+(c_i)^2}\;dy-\int_{\substack{c_i\leq |y|<\gamma}}\dfrac{\dfrac{\Gamma'(y/\gamma)}{(y/\gamma)}(y^2/\gamma)}{(y)^2+(c_i)^2}\;dy\\&
   \lesssim \int_{\substack{0\leq |y|< c_i}} \dfrac{y^2}{(c_i)^2}\dfrac{1}{\gamma}\;dy-\frac{1}{\gamma}\int_{\substack{c_i\leq|y|\leq\gamma}}\dfrac{\Gamma'(y/\gamma)}{(y/\gamma)}\;dy\\
   &\lesssim \frac{\gamma-|c_i|}{\gamma}\leq C \quad \text{for}\quad |c_i|\leq \epsilon_0\gamma.
\end{aligned}
\]
For the second integral of $\mathfrak{Re}(\mathfrak{W}^1_3)(0)$ with interval of integration $\gamma\leq|y|<1$, one can recast it in the following way
\begin{align}
    0&< -\int_{1\geq|y|\geq \gamma}\dfrac{\Gamma'(y/\gamma)(U_{m,\gamma}(y))}{(U'_{m,\gamma}(y))^2((U_{m,\gamma}(y))^2+(c_i)^2)}\;dy\\&
    \lesssim -\int_{1\geq|y|\geq \gamma}\dfrac{\Gamma'(y/\gamma)}{(y/\gamma)}\dfrac{y^2}{((y)^2+c_i^2)}\frac{1}{\gamma}dy\leq C.
\end{align}
As a result, we obtain 
\begin{equation}\label{estimate real W13 at 0}
    C^{-1}<\mathfrak{Re}(\mathfrak{W}^1_3)(0)<C.
\end{equation}

Furthermore, one can check that

\begin{align*}
&\partial_{y_c}(\mathfrak{Re}(\mathfrak{W}^1_3)(y_c))=U'_{m,\gamma}(y_c)\partial_{c_r}\mathfrak{Re}(\mathfrak{W}^1_3)(c_r)\\&=U'_{m,\gamma}(y_c) \partial_{c_r} \bigg( -\int_{-1}^{1}\dfrac{\Gamma'(y/\gamma)(U_{m,\gamma}(y)-c_r)}{(U'_{m,\gamma}(y))^2((U_{m,\gamma}(y)-c_r)^2+(c_i)^2)}\;dy \bigg) \\&=U'_{m,\gamma}(y_c)\int_{U_{m,\gamma}(-1)}^{U_{m,\gamma}(1)} (\partial_{c_r}+\partial_{v}) \bigg( \dfrac{(v-c_r)}{\Big((v-c_r)^2+c_i^2\Big)} \Gamma'(U^{-1}_{m,\gamma}(v)/\gamma)(\partial_v(U_{m,\gamma}^{-1}(v)))^3 \bigg)\;dv\\&
\qquad -\dfrac{(v-c_r)}{\Big((v-c_r)^2+c_i^2\Big)} \Gamma'(U^{-1}_{m,\gamma}(v)/\gamma)(\partial_v(U_{m,\gamma}^{-1}(v)))^3\bigg|_{U_{m,\gamma}(-1)}^{U_{m,\gamma}(1)} U'_{m,\gamma}(y_c) \\&
=U'_{m,\gamma}(y_c)\int_{U_{m,\gamma}(-1)}^{U_{m,\gamma}(1)} \dfrac{(v-c_r)}{\Big((v-c_r)^2+c_i^2\Big)} 
 \partial_v \bigg(\Gamma'(U^{-1}_{m,\gamma}(v)/\gamma)(\partial_v(U_{m,\gamma}^{-1}(v)))^3\bigg)\;dv\\&
 \qquad-\dfrac{(v-c_r)}{\Big((v-c_r)^2+c_i^2\Big)} \Gamma'(U^{-1}_{m,\gamma}(v)/\gamma)(\partial_v(U_{m,\gamma}^{-1}(v)))^3\bigg|_{U_{m,\gamma}(-1)}^{U_{m,\gamma}(1)}U'_{m,\gamma}(y_c)\\&
 \leq \int_{-1}^{1} \dfrac{(U_{m,\gamma}(y)-c_r)}{\Big((U_{m,\gamma}(y)-c_r)^2+c_i^2\Big)} 
  \bigg(\dfrac{1}{\gamma}\dfrac{\Gamma''(y/\gamma)}{(U'_{m,\gamma}(y))^3}-3\dfrac{\Gamma'(y/\gamma)U''_{m,\gamma}(y)}{(U'_{m,\gamma}(y))^4}\bigg)U'_{m,\gamma}(y_c)\;dy+C\\&
 \lesssim \norm{\dfrac{1}{\gamma}\dfrac{\Gamma''(y/\gamma)}{(U'_{m,\gamma}(y))^3}-3\dfrac{\Gamma'(y/\gamma)U''_{m,\gamma}(y)}{(U'_{m,\gamma}(y))^4}}_{L^2}^{1/2}\\&
 \quad \times \bigg(\norm{\partial_y\Big(\dfrac{1}{\gamma}\dfrac{\Gamma''(y/\gamma)}{(U'_{m,\gamma}(y))^3}-3\dfrac{\Gamma'(y/\gamma)U''_{m,\gamma}(y)}{(U'_{m,\gamma}(y))^4}\Big)}_{L^2}^{1/2}+\norm{ \Big(\dfrac{1}{\gamma}\dfrac{\Gamma''(y/\gamma)}{(U'_{m,\gamma}(y))^3}-3\dfrac{\Gamma'(y/\gamma)U''_{m,\gamma}(y)}{(U'_{m,\gamma}(y))^4}\Big)}_{L^{\infty}}^{1/2}\bigg)+C\\&
 \lesssim \dfrac{1}{\gamma}. 
\end{align*}

By the Mean Value theorem, it holds that
\[
|\mathfrak{Re}(\mathfrak{W}^1_3)(y_c)-\mathfrak{Re}(\mathfrak{W}^1_3)(0)|\leq |y_c| \norm{\partial_{y_c}\mathfrak{Re}(\mathfrak{W}^1_3)}_{L^{\infty}}\lesssim \dfrac{|y_c|}{\gamma}\lesssim \epsilon_0.
\]
\begin{comment}
By virtue of our previous observation that the imaginary part of $W$ is zero, we therefore can simply ignore the imaginary part of $\mathfrak{W}^1_3$. 
Direct computation tells us that
\[
|\mathfrak{Im}(\mathfrak{W}^1_3)|=\int_{-1}^{1}\dfrac{|c_i|}{U^2_{m,\gamma}(y)+(c_i)^2}\;dy\leq C.
\]
\end{comment}
Therefore, using the previous inequality with $\epsilon_0 \ll 0$ and together with the estimate derived in \eqref{estimate real W13 at 0}, we can infer that there exists $C>0$ such that
\[
C^{-1}\leq \mathfrak{Re}(\mathfrak{W}^1_3)(y_c)\leq C
\] which is equivalent to saying  $C^{-1}\leq \partial_m \text{I}_r \leq C$.

Next, we go through a similar process and derive an estimate for the imaginary part of $\mathfrak{W}^1_3$. The argument for this patterns the one presented previously when estimating the real part of $\mathfrak{W}^1_3$. Direct computation tells us that

\begin{align*}
|\mathfrak{Im}(\mathfrak{W}^1_3)(0)|&=\Big|c_i\int_{-1}^{1}\dfrac{\Gamma'(y/\gamma)}{(U'_{m,\gamma}(y))^2((U_{m,\gamma}(y))^2+(c_i)^2)}\;dy\Big|\\&\leq \int_{-1}^{1}\dfrac{|c_i||\frac{\Gamma'(y/\gamma)}{(y)/\gamma}||y/\gamma|}{(U'_{m,\gamma}(y))^2((U_{m,\gamma}(y))^2+(c_i)^2)}\;dy\\&\lesssim \dfrac{c_i}{\gamma}
\int_{U_{m,\gamma}(-1)}^{U_{m,\gamma}(1)}\dfrac{U^{-1}_{m,\gamma}(v)}{((v)^2+c_i^2)} (\partial_v(U^{-1}_{m,\gamma}(v)))^3\;dv\\&
\lesssim \dfrac{c_i}{\gamma}\bigg(\lVert \partial_v(U^{-1}_{m,\gamma}(v))^3 \rVert^{1/2}_{L^2}\bigg(\lVert \partial_v(\partial_v(U^{-1}_{m,\gamma}(v))^3) \rVert^{1/2}_{L^2}+\lVert \partial_v(U^{-1}_{m,\gamma}(v))^3 \rVert^{1/2}_{L^\infty}\bigg)\bigg)\lesssim \dfrac{c_i}{\gamma} \lesssim \epsilon_0.
\end{align*}

Additionally, we obtain the following estimate for $\partial_{y_c}(\mathfrak{Im}(\mathfrak{W}^1_3)(y_c))$,

\begin{align*}
&\partial_{y_c}(\mathfrak{Im}(\mathfrak{W}^1_3)(y_c))=U'_{m,\gamma}(y_c)\partial_{c_r}\mathfrak{Im}(\mathfrak{W}^1_3)\\&=U'_{m,\gamma}(y_c) \partial_{c_r} \bigg(\int_{-1}^{1}\dfrac{\Gamma'(y/\gamma)c_i}{(U'_{m,\gamma}(y))^2((U_{m,\gamma}(y)-c_r)^2+(c_i)^2)}\;dy \bigg) \\&=U'_{m,\gamma}(y_c)\int_{U_{m,\gamma}(-1)}^{U_{m,\gamma}(1)} (\partial_{c_r}+\partial_{v}) \bigg( \dfrac{c_i}{\Big((v-c_r)^2+c_i^2\Big)} \Gamma'(U^{-1}_{m,\gamma}(v)/\gamma)(\partial_v(U_{m,\gamma}^{-1}(v)))^3 \bigg)\;dv\\&
\qquad -\dfrac{(c_i)}{\Big((v-c_r)^2+c_i^2\Big)} \Gamma'(U^{-1}_{m,\gamma}(v)/\gamma)(\partial_v(U_{m,\gamma}^{-1}(v)))^3\bigg|_{U_{m,\gamma}(-1)}^{U_{m,\gamma}(1)} U'_{m,\gamma}(y_c) \\&
=U'_{m,\gamma}(y_c)\int_{U_{m,\gamma}(-1)}^{U_{m,\gamma}(1)} \dfrac{c_i}{\Big((v-c_r)^2+c_i^2\Big)} 
 \partial_v \bigg(\Gamma'(U^{-1}_{m,\gamma}(v)/\gamma)(\partial_v(U_{m,\gamma}^{-1}(v)))^3\bigg)\;dv\\&
 \qquad-\dfrac{c_i}{\Big((v-c_r)^2+c_i^2\Big)} \Gamma'(U^{-1}_{m,\gamma}(v)/\gamma)(\partial_v(U_{m,\gamma}^{-1}(v)))^3\bigg|_{U_{m,\gamma}(-1)}^{U_{m,\gamma}(1)}U'_{m,\gamma}(y_c)\\&
 \leq \int_{-1}^{1} \dfrac{c_i}{\Big((U_{m,\gamma}(y)-c_r)^2+c_i^2\Big)} 
  \bigg(\dfrac{1}{\gamma}\dfrac{\Gamma''(y/\gamma)}{(U'_{m,\gamma}(y))^3}-3\dfrac{\Gamma'(y/\gamma)U''_{m,\gamma}(y)}{(U'_{m,\gamma}(y))^4}\bigg)U'_{m,\gamma}(y_c)\;dy+C\\&
 \lesssim \norm{\dfrac{1}{\gamma}\dfrac{\Gamma''(y/\gamma)}{(U'_{m,\gamma}(y))^3}-3\dfrac{\Gamma'(y/\gamma)U''_{m,\gamma}(y)}{(U'_{m,\gamma}(y))^4}}_{L^2}^{1/2}\\&
 \quad \times \bigg(\norm{\partial_y\Big(\dfrac{1}{\gamma}\dfrac{\Gamma''(y/\gamma)}{(U'_{m,\gamma}(y))^3}-3\dfrac{\Gamma'(y/\gamma)U''_{m,\gamma}(y)}{(U'_{m,\gamma}(y))^4}\Big)}_{L^2}^{1/2}+\norm{ \Big(\dfrac{1}{\gamma}\dfrac{\Gamma''(y/\gamma)}{(U'_{m,\gamma}(y))^3}-3\dfrac{\Gamma'(y/\gamma)U''_{m,\gamma}(y)}{(U'_{m,\gamma}(y))^4}\Big)}_{L^{\infty}}^{1/2}\bigg)+C\\&
 \lesssim \dfrac{1}{\gamma}. 
\end{align*}

Similar as before, via the Mean Value theorem we can infer that 
\begin{equation}\label{estimate Imaginary part W13}
|\mathfrak{Im}(\mathfrak{W}^1_3)(y_c)|\lesssim \epsilon_0.
\end{equation}
Having derived both estimates for the real and imaginary parts of $\mathfrak{W}^1_3$, we therefore conclude that
\[
C^{-1}<|\mathfrak{W}^1_3|<C.
\]
Recalling \eqref{partial m I} and choosing $\gamma$ to be sufficiently small yield
\[
C^{-1}\leq|\partial_m \textup{I}|\leq C. \qedhere
\]
\end{proof}

Last but not least, let us estimate the term $\mathfrak{W}^4$. 

\begin{lemma}[Estimate on $\partial_{m} \textup{II}$]\label{estimate partial m II}
We have the following estimate
\[
|\partial_{m}\textup{II}| \leq C \gamma.
\]
\end{lemma}
\begin{proof}
First of all, notice that $\partial_m\textup{II}=\mathfrak{W}^4$. We write $\mathfrak{W}^4$ explicitly as follows
\[
\begin{aligned}
\mathfrak{W}^4&=\int_{-1}^1\dfrac{-2\gamma^2\Big(\widetilde{\Gamma}(y/\gamma)-\widetilde{\Gamma}(y_c/\gamma)\Big)}{\Big(U_{m,\gamma}(y)-U_{m,\gamma}(y_c)-ic_i\Big)^3}\bigg(\dfrac{1}{(\phi_1^E)^2(y,m,\gamma,c_r+ic_i,-1)}-1\bigg)\\&
\qquad +\dfrac{1}{\Big(U_{m,\gamma}(y)-U_{m,\gamma}(y_c)-ic_i\Big)^2}\partial_m\bigg(\dfrac{1}{(\phi_1^E)^2(y,m,\gamma,c_r+ic_i,-1)}-1\bigg)\;dy\\&=\mathfrak{W}^4_1+\mathfrak{W}^4_2.
\end{aligned}
\]

The term $\mathfrak{W}^4_1$ obeys the following inequality
\[
\begin{aligned}
|\mathfrak{W}^4_1|&\leq \int_{-1}^1\dfrac{\gamma|y-y_c|}{((U_{m,\gamma}(y)-c_r)^2+(c_i)^2)^{3/2}}\lVert \Gamma(y/\gamma)\rVert_{L^{\infty}}\bigg| \dfrac{1}{(\phi_1^E)^2(y,m,\gamma,c_r+ic_i,-1)}-1\bigg|\;dy\\&
\leq C \gamma \int_{-1}^1 \dfrac{\min\{c_i,(y-y_c)^2\}}{(y-y_c)^2+(c_i)^2}\;dy\leq C \gamma,
\end{aligned}
\]
where we have used the bounds in Lemma~\ref{phi1 phi2} for $\phi_1$ and $\phi_2$.

Next, in estimating the other portion of $\mathfrak{W}^4$, namely $\mathfrak{W}^4_2$, we use the estimate found in Lemma~\ref{partial m phi1}, more precisely 
\[
|\mathfrak{W}^4_2|\lesssim \int_{-1}^1 \dfrac{\gamma (y-y_c)^2}{(y-y_c)^2+(c_i)^2}\;dy\lesssim \gamma \Big(\tan^{-1}(\frac{c_i}{1-y_c})-\tan^{-1}(\frac{c_i}{-1-y_c})\Big)\lesssim \gamma^2,
\]
where we have used the Taylor expansion of $\tan^{-1}$ and the fact that $c_i\approx \gamma$. Gathering both estimates for $\mathfrak{W}^4_1$ and $\mathfrak{W}^4_2$ and choosing the constant $C$ sufficiently large, we conclude that $|\mathfrak{W}^4|\leq C \gamma$. In light of the expression \text{II} in \eqref{modified wronskian at cr 0} and the expression of $\mathfrak{W}^4$ defined earlier, the above estimate is equivalent to 
\begin{equation}\label{partial m II}
|\partial_m \text{II}|\leq C \gamma.\qedhere
\end{equation}
\end{proof}

\begin{remark}\label{remark partial m W}
    As a consequence of Lemma~\ref{estimate partial m I} and Lemma~\ref{estimate partial m II}, we have 
    \[
    C^{-1}\leq \partial_{m}W_r\leq C, \qquad  |\partial_{m}W_i|\leq C \epsilon_0.
    \]
\end{remark}

\textbf{Step 3:} Finally, we would like gather and unify of all the estimates we have obtained to determine the sign and bounds of $\partial_m c_i$ and $|\partial_m c_r|$. First of all recall that the Wronskian $\mathcal{W}=\phi(-1)\phi(1) W$  is complex analytic. Using the notations in Lemma~\ref{phi1 phi2} and Lemma~\ref{estimate on the good derivative of phi1E}, we can write $\phi(-1)\phi(1)=(U_{m,\gamma}(-1)-c)\phi_1^E(-1)(U_{m,\gamma}(1)-c)\phi_1^E(1)$. Since the reciprocal of $(U_{m,\gamma}(-1)-c)(U_{m,\gamma}(1)-c)$ is in itself analytic, we define another modified Wronskian where we mod out the terms $(U_{m,\gamma}(-1)-c)(U_{m,\gamma}(1)-c)$. Abusing notation, let us reuse $\mathcal{W}$ to represent such modified Wronskian, simply put
\[
\mathcal{W}=\phi_1^E(-1)\phi_1^E(1)W=:gW.
\]
Since $g$ is complex, we shall use $g_r$ and  $g_i$ to denote the real and imaginary parts of $g$ respectively. As a result, $\mathcal{W}$ can be decomposed into its real and imaginary parts as follows
\[
\begin{aligned}
    &\mathcal{W}_r=g_r W_r-g_i W_i,\\&
    \mathcal{W}_i=g_r W_i+g_i W_r.
\end{aligned}
\]
Applying $\partial_{c_i}$ and $\partial_{m}$ to the above equations yields the following
\[
\begin{aligned}
&\partial_{c_i}\mathcal{W}_r=(\partial_{c_i}g_r)W_r+g_r \partial_{c_i}W_r-(\partial_{c_i}g_i)W_i-g_i\partial_{c_i}W_i,\\&
\partial_{c_i}\mathcal{W}_i=(\partial_{c_i}g_r)W_i+g_r \partial_{c_i}W_i+(\partial_{c_i}g_i)W_r+g_i\partial_{c_i}W_r,\\&
\partial_{m}\mathcal{W}_r=(\partial_{m}g_r)W_r+g_r \partial_{m}W_r-(\partial_{m}g_i)W_i-g_i\partial_{m}W_i,\\&
\partial_{m}\mathcal{W}_i=(\partial_{m}g_r)W_i+g_r \partial_{m}W_i+(\partial_{m}g_i)W_r+g_i\partial_{m}W_r.
\end{aligned}
\]
Taking advantage of estimates of $\phi_2$ in \eqref{phi1 phi2}, we obtain
\[
|\mathfrak{Re}(\phi_2 (-1)\phi_2(1))-1|\lesssim c_i.
\]
Choosing $c_i$ to be sufficiently small permits us to infer that 
\[
C^{-1}\leq g_r=\phi_1(-1)\phi_1(1)+\mathfrak{Re}(\phi_2 (-1)\phi_2(1))\leq C.
\] Additionally, 
\[
|g_i|\lesssim c_i.
\]
These observations combined with our estimates in Lemma~\ref{estimate of W}, \ref{estimate partial ci Wi}, \ref{estimate partial ci Wr}, \ref{phi1 phi2}, \ref{partial m phi1} and \ref{partial c phi1} allow us to infer
\begin{equation}\label{various estimates for derivative of W}
\begin{aligned}
\dfrac{-1}{C\gamma}\leq &\partial_{c_i} \mathcal{W}_r \leq \dfrac{-C}{\gamma},\quad
|\partial_{c_i} \mathcal{W}_i| \leq \dfrac{C}{\gamma},\quad
 C^{-1} \leq \partial_{m} \mathcal{W}_r \leq C,\quad
|\partial_{m} \mathcal{W}_i| \leq C \epsilon_0.
\end{aligned}
\end{equation}
Consequently, using the above inequalities and applying them to the system in \eqref{ODE} yield the desired estimates
\begin{equation}\label{ineq partial m ci}
C^{-1} \gamma \leq -F \leq C \gamma, \qquad |G| \leq C \gamma.
\end{equation}

Additionally, one can check that the determinant of the Jacobian matrix
\[
J:=\begin{pmatrix}
\partial_m \mathcal{W}_r & \partial_{c_i} \mathcal{W}_r \\
\partial_m \mathcal{W}_i  & \partial_{c_i} \mathcal{W}_r 
\end{pmatrix},
\]
at $(m_*,\gamma_*,
0,-1)$ is given by 
\[
|J(m_*,\gamma_*,
0,-1)|=(\partial_{c_i}  \mathcal{W}_r)^2(m_*,\gamma_*,
0,-1) +(\partial_{c_i}  \mathcal{W}_i)^2(m_*,\gamma_*,
0,-1)\gtrsim \frac{1}{\gamma^2}.
\]
The Theorem~\ref{linear}\ref{Instability} therefore follows directly from an application of IFT as described earlier. Meanwhile, estimate \eqref{estimate for lambda r} is simply a straight consequence of inequalities in \eqref{ineq partial m ci}.\qedhere
\end{proof}

Having completed the proof for Theorem~\ref{linear}. We are now ready to present our argument for Theorem~\ref{nonlinear}. This is exactly the content of what is coming next.

\subsection{Nontrivial steady solution}\label{Cat's eyes}
In this subsection, we show an existence of a flow that solves \eqref{original euler} with non-shearing structure. As mentioned at the outset, this portion of our work is fully inspired by the seminal work of \cite{LinZeng2011}. In the aforementioned paper, both authors, Lin and Zeng, showed an existence of a steady nontrivial (non-sheared) flow near Couette. In contrast, our result applies to a general class of monotonic background shear flows (not only Couette). To that end, we begin with the following lemma.
\begin{lemma}\label{Bifurcation}
Let $\mathcal{U}$ and $G$ be as stated in Lemma~\ref{regularity lemma}. If the operator
\begin{equation}\label{Operator L}
\mathcal{X}:=-\dfrac{d^2}{dy^2}+\dfrac{\mathcal{U}''}{\mathcal{U}},\; H^{2}(-1,1) \to L^{2}(-1,1),
\end{equation} with Dirichlet boundary conditions at $y\in \{\pm1\}$ has a negative eigenvalue $-k_0^2$, then there exists  $\epsilon_0>0$ such that for any $0 < \epsilon<\epsilon_0$, there exist a nontrivial (non-sheared) steady solution $(u_\epsilon(x,y),v_\epsilon(x,y))$ with period $T_\epsilon$ such that 
\begin{equation}\label{norm nonsheard and background shear}
 \norm{(u_{\epsilon}(x,y),v_{\epsilon}(x,y))-(\mathcal{U},0)}_{H^{2}(\mathbb{T}_{2\pi}\times[-1,1])}=\epsilon.
\end{equation} As $\epsilon \to 0$, the period $T_\epsilon \to \frac{2 \pi}{k_0}$. Moreover, 
\begin{equation}\label{higher exponent norm}
 \norm{(u_{\epsilon}(x,y),v_{\epsilon}(x,y))-(\mathcal{U},0)}_{H^{5/2-\tau+N}(\mathbb{T}_{2\pi}\times[-1,1])}\leq C(\gamma,N) \epsilon.
\end{equation}
Here the constant $C(\gamma,N)$ depends on $\gamma, N$. 
\end{lemma}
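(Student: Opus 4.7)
The plan is to invoke the Crandall--Rabinowitz local bifurcation theorem applied to the semilinear elliptic equation $\Delta \psi = \widetilde G(\psi)$, whose solutions automatically yield steady Euler flows via $(u,v) = (\partial_y\psi, -\partial_x\psi)$. The background shear $(\mathcal{U},0)$ corresponds to the modified stream function $\widetilde\psi_0(y)$ with $\widetilde G(\widetilde\psi_0(y)) = \mathcal{U}'(y)$, as provided by Lemma~\ref{regularity lemma}. To fix the period I rescale $\xi := kx$, so that any $2\pi/k$-periodic solution in $x$ becomes $2\pi$-periodic in $\xi$, and define on the closed subspace $X \subset H^2(\mathbb{T}_{2\pi}\times[-1,1])$ of even-in-$\xi$, Dirichlet-in-$y$ functions the nonlinear map
\[
\mathcal{F}(\psi, k) := k^2 \partial_\xi^2\psi + \partial_y^2\psi - \widetilde G(\psi), \qquad \mathcal{F}: X \times \mathbb{R}_{>0} \to L^2.
\]
Every pair $(\widetilde\psi_0,k)$ with $k>0$ is a zero of $\mathcal{F}$; the goal is to show that a nontrivial branch bifurcates off at $k=k_0$.

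The Fr\'echet derivative along the trivial branch is
\[
D_\psi\mathcal{F}(\widetilde\psi_0,k)[\phi] = k^2\partial_\xi^2\phi + \partial_y^2\phi - \widetilde G'(\widetilde\psi_0(y))\phi,
\]
and differentiating the defining relation $\widetilde G(\widetilde\psi_0(y)) = \mathcal{U}'(y)$ in $y$ identifies $\widetilde G'(\widetilde\psi_0(y)) = \mathcal{U}''(y)/\mathcal{U}(y)$. Expanding $\phi$ in a Fourier series in $\xi$ decouples the linear equation $D_\psi\mathcal{F}(\widetilde\psi_0,k)[\phi]=0$ into the family of one-dimensional eigenproblems $\mathcal{X}\hat\phi_n = -k^2n^2\hat\phi_n$ with Dirichlet data at $y=\pm 1$. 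Choosing $k=k_0$, evenness in $\xi$ eliminates all Fourier modes except $n=1$, and the kernel reduces to $\mathrm{span}\{\phi_*(y)\cos\xi\}$, where $\phi_*$ is the eigenfunction of $\mathcal{X}$ associated to $-k_0^2$. Standard Fredholm theory identifies the range as the $L^2$-orthogonal complement of this kernel.

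Next I verify the Crandall--Rabinowitz transversality condition by the direct computation
\[
\partial_k D_\psi\mathcal{F}(\widetilde\psi_0,k_0)[\phi_*\cos\xi] = 2k_0\,\partial_\xi^2(\phi_*\cos\xi) = -2k_0\,\phi_*\cos\xi,
\]
which lies in the kernel and is therefore not in the range. Lemma~\ref{regularity lemma} supplies the required smoothness of $\widetilde G$ to render $\mathcal{F}$ continuously differentiable from $X\times\mathbb{R}_{>0}$ to $L^2$. Crandall--Rabinowitz then delivers a $C^1$ curve $s\mapsto(\psi_s,k_s)$ of nontrivial solutions with $k_s\to k_0$ and
\[
\psi_s - \widetilde\psi_0 = s\,\phi_*(y)\cos\xi + o_{H^2}(s).
\]
Reparametrizing by $\epsilon := \|(u_\epsilon,v_\epsilon)-(\mathcal{U},0)\|_{H^2}$ gives the period bound $T_\epsilon = 2\pi/k_\epsilon \to 2\pi/k_0$ together with \eqref{norm nonsheard and background shear}, and the higher Sobolev bound \eqref{higher exponent norm} follows by standard elliptic bootstrapping in $\Delta\psi = \widetilde G(\psi)$, using that Lemma~\ref{regularity lemma} yields $\widetilde G \in C^{2N+3}$.

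The principal obstacle is the smoothness of $\widetilde G$: because it is defined through the inverse of a function that degenerates at $y=0$, without the structural hypothesis $U_{m,\gamma}^{(2n)}(0)=0$ for $n=1,\ldots,N+1$ it would fail to be even $C^2$, while the classical formulation of Crandall--Rabinowitz assumes $C^2$ regularity of $\mathcal{F}$. Both issues are handled together: Lemma~\ref{regularity lemma} gives the required regularity of $\widetilde G$, and, as indicated in the introduction, the bifurcation step is carried out in the adapted framework that only demands $\mathcal{F}$ to be continuously differentiable between the relevant Hilbert spaces, which is the level of regularity actually available here.
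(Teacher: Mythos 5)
Your proposal follows essentially the same route as the paper: recasting steady solutions via $\Delta\psi=\widetilde G(\psi)$, rescaling $x$ by the bifurcation parameter $k$, identifying the one-dimensional kernel $\mathrm{span}\{\phi_*(y)\cos\xi\}$ of the linearization $k_0^2\partial_\xi^2+\partial_y^2-\mathcal{U}''/\mathcal{U}$, verifying the Crandall--Rabinowitz transversality condition by differentiating in $k$, and obtaining the higher norm \eqref{higher exponent norm} by elliptic bootstrap using the regularity of $\widetilde G$ from Lemma~\ref{regularity lemma}. The only slip is your claim that Lemma~\ref{regularity lemma} yields $\widetilde G\in C^{2N+3}$ --- it gives $\widetilde G\in C^{N+1}$ (from $\mathcal{U}\in C^{2N+3}$), which is what the bootstrap to $H^{5/2-\tau+N}$ actually uses; otherwise the argument matches the paper's.
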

\begin{proof} 
We recall the definition of $\widetilde{\psi}_0$ in \eqref{definition of tilde psi}. 
For arbitrary stream function, we introduce the extension of the map $G\in C^1(\min{\widetilde{\psi}_0},\max{\widetilde{\psi}_0})$ via $\widetilde{G}\in C^1_0(\mathbb{R})$ to the entire real line where $\widetilde{G} \equiv G$ on $[\min{\widetilde{\psi}_0},\max{\widetilde{\psi}_0}]$. One can check that any stream function $\psi $ that solves
\begin{equation}\label{Poisson-type eq}
\Delta \psi= \widetilde{G}(\psi),
\end{equation}
is indeed also a solution of \eqref{Euler Equations}. In preparation of implementing a bifurcation framework, we introduce a new horizontal variable $\zeta:=kx$. The variable $k$ will play a role as the bifurcation parameter in our construction. In the new coordinate $(\zeta,y)$, the stream function $\psi$ becomes $\bar{\psi}$. Therefore, equation \eqref{Poisson-type eq} now reads 

\begin{equation}\label{Poisson-type eq in new variable}
k^2\dfrac{\partial^2}{\partial_\zeta^2}\bar{\psi}(\zeta,y) +\dfrac{\partial^2}{\partial_y^2}\bar{\psi}(\zeta,y) = \widetilde{G}(\bar{\psi}(\zeta,y)).
\end{equation} 

Our goal is to construct ``near-by solutions"  to the background flow. In light of that, we consider the perturbation of the background stream function 
\[
\psi_{\text{per}}(\zeta, y)=\bar{\psi}(\zeta,y)-\psi_0(y),
\] where the subscript ``per" refers to perturbation. 

We define the following two spaces
\[
P:=\{\psi_{\text{per}}(\zeta,y)\in H^3(\mathbb{T}_{2 \pi}\times [-1,-1]), \psi_{\text{per}}(\zeta,-1)=0=\psi_{\text{per}}(\zeta,1),2\pi-\text{periodic}\},
\]
and
\[
D:=\{\psi_{\text{per}}(\zeta,y)\in H^1(\mathbb{T}_{2 \pi}\times [-1,-1]), \psi_{\text{per}}(\zeta,-1)=0=\psi_{\text{per}}(\zeta,1),2\pi-\text{periodic}\}.
\]
Via \eqref{Poisson-type eq in new variable}, we define an operator
\[
\mathcal{F}(\psi_{\text{per}},k^2):P\times \mathbb{R}^+ \to D,
\]
by
\begin{equation}
\begin{aligned}
\mathcal{F}(\psi_{\text{per}},k^2):&=k^2\dfrac{\partial^2}{\partial_\zeta^2}\psi_{\text{per}}+\dfrac{\partial^2}{\partial_y^2}\psi_{\text{per}} -\Big(\widetilde{G}(\psi_{\text{per}}+\psi_0)-\widetilde{G}(\psi_0)\Big),
\end{aligned}
\end{equation}

Observe that for any $k\in \mathbb{R}$, $\mathcal{F}(0,k^2)=0$. We then linearize the operator $\mathcal{F}$ around $(\psi_{\text{per}},k^2)=(0,k_0^2)$, where $k_0$ is the wave number that corresponds to the negative eigenvalue of the Sturm--Liouville problem. The resulting linearized operator around $(0,k_0^2)$ now reads
\begin{equation}
\begin{aligned}
\mathfrak{L}:=\mathcal{F}_{\psi_{\text{per}}}(0,k_0^2)&=k_0^2 \dfrac{\partial^2}{\partial_\zeta^2}+\dfrac{\partial^2}{\partial_y^2}-\widetilde{G}'(\psi_0)\\
                    &=k_0^2 \dfrac{\partial^2}{\partial_\zeta^2}+\dfrac{\partial^2}{\partial_y^2}-\dfrac{\mathcal{U}''}{\mathcal{U}}.\end{aligned}
\end{equation}  Let $\phi_0$ be the associated positive eigenvector of the eigenvalue $-k_0^2$ of $\mathcal{X}$. One can check that the kernel of $\mathfrak{L}$ given by
\[
\textup{Ker}(\mathfrak{L})=\textup{span}(\phi_0(y) \cos{\zeta}),
\] is one dimensional. Further, since $\mathfrak{L}$ is self-adjoint, then $\phi_0(y) \cos{\zeta} \notin \text{Ran}(\mathfrak{L})$. One can also see that 
\begin{equation}\label{mixed derivative}
\lim_{k \to k_0}\dfrac{\partial_{\psi_{\text{per}}}\mathcal{F}(0,k^2)(\phi_0(y) \cos{\zeta})-\partial_{\psi_{\text{per}}}\mathcal{F}(0,k_0^2)(\phi_0(y) \cos{\zeta})}{k^2-k_0^2}=-\phi_0(y) \cos{\zeta} \notin \text{Ran}(\mathfrak{L}).
\end{equation}Formally speaking, the mixed-higher derivative is well-defined at $(0,k_0^2)$ in the direction of the kernel.

All of these facts together allow us to employ the Crandall--Rabinowitz local bifurcation theorem \cite{CrandallRabinowitz1971} from which we obtain a one-parameter local curve of solutions
\[
\mathcal{C}^\epsilon_{\loc}:=\{\psi_{\text{per}}(\epsilon),k^2(
\epsilon): \mathcal{F}(\psi_{\text{per}}(\epsilon),k^2(\epsilon))=0,\; 0\leq \epsilon<\epsilon_0\} \subset H^3 \times \mathbb{R},
\]
for some $\epsilon_0>0.$
 It is important to remark that the curve $\mathcal{C}^\epsilon_{\loc}$ contains the information of the perturbation. As $\epsilon \to 0$, then $(\psi_{\text{per}}(\epsilon),k^2(\epsilon))\to (0,k^2_0)$; this is the case when perturbation size shrinks to zero. Hence, to the leading order, the perturbed stream function takes the form
 \begin{equation}
 \bar{\psi}(\zeta,y)=\psi_0(y)+\epsilon \phi_0(y) \cos{\zeta}+o(\epsilon),
 \end{equation}
 where $\phi_0$ is the positive eigenfunction associated with eigenvalue $-k_0^2$ of the operator $\mathcal{X}$. This gives rise to the non-sheared solution and the norm in \eqref{norm nonsheard and background shear}.
 
 In order to obtain the estimate on the higher regularity \eqref{higher exponent norm}, we first recall from Lemma~\ref{regularity lemma} that the map $\widetilde{G}\in C^{N+1}$. Using the equation
 \[
 \Delta \psi_{\text{per}}(\zeta,y)=\widetilde{G}(\psi_{\text{per}}(\zeta,y))=\psi_{\text{per}}(\zeta,y)\int_{0}^1\widetilde{G}'(\psi_0+s(\psi_{\text{per}}(\zeta,y)))\;ds.
 \]
 and \eqref{norm nonsheard and background shear},
along with some elliptic bootstrap argument, we get the desired estimate \eqref{higher exponent norm}.
 \qedhere
 \end{proof}
 
 \begin{remark}
We would like to mention that here we do not require $\mathcal{F}$ to belong to $C^2$. Rather, we are content with $\mathcal{F}\in C^1$ as we only need the mix derivative \eqref{mixed derivative} to exist at the point $(0,k_0^2)$ which is one of the sufficient requirements to use the bifurcation theory of \cite{CrandallRabinowitz1971}. This level of regularity condition on $\mathcal{F}$ is indeed weaker than the one used in \cite{LinZeng2011}.
\end{remark}

The remaining portion of this section will be devoted to proving Theorem~\ref{nonlinear}. Most of the ingredients for the proof has been established in Lemma~\ref{Bifurcation}. Additionally, our shear flow $(\mathcal{U},0)$ now takes the form $(U_{m,\gamma},0))$, that is the perturbed shear flow of $(U,0)$. The local bifurcation curve will start at the perturbed background shear. Note that the assumptions on $\mathcal{U}$ are also satisfied by $U_{m,\gamma}$.  Furthermore, the spectral condition required in Lemma~\ref{Bifurcation} for $\mathcal{X}$ is also satisfied by the operator  $\mathcal{H}_{m,\gamma}$.

\begin{proof}[\textbf{Proof of Theorem~\ref{nonlinear}}]
  First of all, we know that from the proof 
 of Theorem~\ref{linear}, for fixed $\gamma_* \in (0,\gamma_0),$ there exists $m_*$, such that $\lambda_{m_*,\gamma_*}=-1$. Moreover, due to the monotonicity of $\lambda_{m,\gamma}$, we can choose $m$ on the interval $|m-m_*|\lesssim \gamma^N$ such that the corresponding operator $\mathcal{H}_{m,\gamma_*}$ has a minimal eigenvalue $\lambda_{m,\gamma_*}$ located close to $-1$ with the associated eigenfunction satisfying the Dirichlet conditions at $y=\pm 1$. 
 
 From Lemma~\ref{Bifurcation}, it is known that for any $m\in (m_*-\gamma^N,m_*+\gamma^N)$, we have a local curve of solutions of the 2-D Euler equations parameterized by $\epsilon$ with $0<\epsilon<\epsilon_0$ \eqref{Euler Equations}  bifurcating from the shear flow $(U_{m,\gamma}(y),0)$ where a nontrivial (non-sheared) steady flow belongs. 

 Via the monotonicity of eigenvalue $\lambda_{m,\gamma}$, there exist $m_1\in(m_*-\gamma^N,m_*)$,  $m_2\in(m_*,m_*+\gamma^N)$, such that 
\[
\lambda_{m_1,\gamma_*;\epsilon}<-1<\lambda_{m_2,\gamma_*;\epsilon}.
\]
Moreover, for any $m\in(m_1,m_2)$, we can show that when $0 \leq s< 5/2+N$,
\[
\norm{U_{m,\gamma}(y)-U(y)}_{H^s(-1,1)}\to 0 \text{ as } \gamma\to 0.
\]
First of all, it is not hard to see that 
\[
U_{m,\gamma}(y)-U_{m_*,\gamma}(y)=(m-m*)\gamma^2\widetilde\Gamma(y/\gamma).
\]
Hence, 
\[
\norm{U_{m,\gamma}(y)-U_{m_*,\gamma}(y)}_{H^s(-1,1)}\lesssim \norm{\gamma^{2+N}\widetilde\Gamma(y/\gamma)}_{H^s(\mathbb{R})}.
\]
But, 
\[
\norm{\gamma^{2+N}\widetilde\Gamma(y/\gamma)}_{L^2(\mathbb{R})} \lesssim \gamma^{3+N}, \text{ and } \norm{\gamma^{2+N}\widetilde\Gamma(y/\gamma)}_{\dot{H}^s(\mathbb{R})} \lesssim \gamma^{5/2-s+N}.
\]
Therefore, we conclude that 
\begin{equation}\label{distance between background shears}
    \norm{U_{m,\gamma}(y)-U_{m_*,\gamma}(y)}_{H^{(\frac{5}{2}-s+N)}(-1,1)}\to 0 \text{ as } \gamma\to 0,
\end{equation}
for $0 \leq s< 5/2+N$.

Observe that, since $\lambda_{m,\gamma;\epsilon}$ is continuous with respect to $m$, the above inequality tells us that there exists a pair $(m^{\circ},\epsilon^{\circ})\in (m_1,m_2)\times(0,\epsilon_0)$ such that following along the local curve bifurcating from $U_{m^{\circ},\gamma_*}$, we find a nontrivial steady solution at the parameter value $\epsilon=\epsilon^{\circ}$ with  $k=\pm 1$ and satisfies the norm in \eqref{norm nonsheard and background shear}.
By choosing even smaller $m\in (m_*-m^{\circ},m_*+m^{\circ})$ and using the estimate \eqref{higher exponent norm}, one obtains
\[\norm{(u_{\epsilon}(x,y),v_{\epsilon}(x,y))-(U_{m,\gamma}(y),0)}_{H^{5/2-\tau+N}(\mathbb{T}_{2\pi}\times[-1,1])}\leq \epsilon,
\]
for sufficiently small $\epsilon>0$. This is precisely the desired inequality in \eqref{norm of the distance background and nontrivial}. Hence, this completes the proof of Theorem~\ref{nonlinear}. \qedhere
\end{proof}
%%APPENDICES
\appendix

\section{Regular solution to the homogeneous Rayleigh equations}\label{Related Results}
The following appendix contains key lemmas that are used in some parts of the proofs throughout the paper. The first lemma below provides us with a representation of solution of the Rayleigh equation \eqref{Rayleigh Equation} along with some estimates of functions involved in he representation. Since this lemma has appeared in previous works, we have decided not to present its proof here. Following that is a sequence of lemma containing some derivative estimates. These estimates are used mostly in proving the results in Theorem~\ref{linear} to obtain bounds on various derivatives of the Wronskian.

\begin{lemma}\label{phi1 phi2}
For any $\lambda$, there exists $0<\epsilon_0\leq 1$, such that for any $c \in \mathfrak{B} \cap \mathfrak{D}_{\epsilon_0}\subset \mathbb{C}$, the Rayleigh equation \eqref{Rayleigh Equation} has a regular solution,
\[
\phi(y,m,\gamma,c,\lambda)=(U_{m,\gamma}(y)-c)\phi_1(y,m,\gamma,c_r,\lambda)\phi_2(y,m,\gamma,c,\lambda),
\]
which satisfies the conditions: $\phi(y_c,m,\gamma,c,\lambda)=0, \phi'(y_c,m,\gamma,c,\lambda)=U'_{m,\gamma}(y_c),$
where $U_{m,\gamma}(y_c)=c_r$. The function $\phi_1$ is a real-valued function that solves
\begin{equation}
    \left\{ \begin{aligned}
    &\partial_y\Big((U_{m,\gamma}(y)-c_r)^2\phi'_1(y,m,\gamma,c_r,\lambda)\Big)+\lambda\phi_1(y,m,\gamma,c_r,\lambda)(U_{m,\gamma}(y)-c_r)^2=0,\\
    & \phi_1(y_c,m,\gamma,c_r,\lambda)=1\qquad \phi'_1(y_c,m,\gamma,c_r,\lambda)=0. 
\end{aligned} \right.
\end{equation}
Meanwhile, the function $\phi_2$ is a complex-valued function that solves
\begin{equation}\label{DE for phi2}
    \left\{ \begin{aligned}
    &\partial_y\Big((U_{m,\gamma}(y)-c)^2\phi^2_1(y,m,\gamma,c_r,\lambda)\phi'_2(y,m,\gamma,c,\lambda)\Big)\\
    &\qquad \qquad \qquad +\dfrac{2ic_iU'_{m,\gamma}(y)(U_{m,\gamma}(y)-c)}{U_{m,\gamma}(y)-c_r}\phi_1(y,m,\gamma,c_r,\lambda)\phi'_1(y,m,\gamma,c_r,\lambda)\phi_2(y,m,\gamma,c,\lambda)=0,\\
    & \phi_2(y_c,m,\gamma,c,\lambda)=1\qquad \phi'_2(y_c,m,\gamma,c,\lambda)=0. 
\end{aligned} \right.
\end{equation}
Explicitly, $\phi_1$ and $\phi_2$ take the form
\begin{equation}\label{general expression of phi1}
\phi_1(y,m,c_r,\lambda)=1+\int_{y_c}^y \dfrac{-\lambda}{(U_{m,\gamma}(w)-c_r)^2}\int_{y_c}^{w}\phi_1(z,\lambda)(U_{m,\gamma}(z)-c_r)^2\;dz\;dw.
\end{equation}
and
\[
\begin{aligned}
    &\phi_2(y,m,\gamma,c,\lambda)=1\\
    &-2ic_i\int_{y_c}^y \dfrac{1}{(U_{m,\gamma}(w)-c)^2\phi^2_1(w,m,\gamma,c_r,\lambda)}\\
    &\quad\quad\quad\times \int_{y_c}^{w}\dfrac{U'_{m,\gamma}(z)(U_{m,\gamma}(z)-c)}{(U_{m,\gamma}(z)-c_r)}\phi'_1(z,m,\gamma,c,\lambda)\phi_1(z,m,\gamma,c,\lambda)\phi_2(z,m,\gamma,c,\lambda)\;dz\;dw.
\end{aligned}
\]
Moreover, we have the following estimates for $\phi_1$, $\phi_2$ and their derivatives,
\begin{align*}
    &1\leq \phi_1(y_1,m,\gamma,c_r,\lambda)\leq \phi_1(y_2,m,\gamma,c_r,\lambda)\quad \text{for}\quad |y_2-y_c|\geq |y_1-y_c|,\\
    &C^{-1}e^{C^{-1}\sqrt{-\lambda}|y-y_c|}\leq \phi_1(y,m,\gamma,c_r,\lambda)\leq Ce^{C\sqrt{-\lambda}|y-y_c|}\\
    &\phi_1(y,m,\gamma,c_r,\lambda)-1\leq C\min \{|\lambda||y-y_c|^2,1\}\phi_1(y,m,\gamma,c,\lambda)\\
    &\left|\frac{\partial_y\phi_1(y,m,\gamma,c_r,\lambda)}{\phi_1(y,m,\gamma,c_r,\lambda)}\right|\leq C\sqrt{-\lambda}|y-y_c|,\quad \left|\frac{\partial_{yy}\phi_1(y,m,\gamma,c_r,\lambda)}{\phi_1(y,m,\gamma,c_r,\lambda)}\right|\leq C|\lambda|
\end{align*}
and
\begin{align}
    &|\phi_{2}(y,m,\gamma,c,\lambda)-1|\le C\min \{\sqrt{-\lambda}|c_i|, |\lambda||y-y_c|^2\},\label{eq-phi2-est-1}\\
      &|\partial_y\phi_{2}(y,m,\gamma,c,\lambda)|\le C|\lambda|\min \{c_i, |y-y_c|\},\label{eq-phi2-est-2}\\
      & \|\partial_{yy}\phi_{2}(y,m,\gamma,c,\lambda)\|_{L^\infty_y}\le C|\lambda|,\label{eq-phi2-est-3}
  \end{align}

The constant $C$ is independent of $\gamma, y, y_c, m, \lambda,c$. 
\end{lemma}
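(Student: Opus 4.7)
The plan is to follow the standard reduction-of-order recipe for the Rayleigh equation with a critical layer, namely to factor the regular solution as $\phi = (U_{m,\gamma}-c)\phi_1\phi_2$, treating $\phi_1$ as the real ``WKB-type'' building block centered at the base point $y_c$ (where $U_{m,\gamma}(y_c)=c_r$) and $\phi_2$ as a small complex correction that collapses to $1$ when $c_i=0$. Substituting $\Phi=(U_{m,\gamma}-c)\phi$ into \eqref{Rayleigh Equation} and multiplying by $(U_{m,\gamma}-c)$ eliminates the critical-layer singularity $U''_{m,\gamma}/(U_{m,\gamma}-c)$ and produces the divergence-form ODE
\begin{equation*}
    \partial_y\bigl((U_{m,\gamma}-c)^2\phi'\bigr) + \lambda(U_{m,\gamma}-c)^2\phi = 0,
\end{equation*}
which is the common starting point for both sub-problems. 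Setting $c_i=0$ (i.e.\ replacing $c$ by $c_r$) gives the equation for $\phi_1$, while plugging $\phi=\phi_1\phi_2$ into the full equation and using the $\phi_1$ equation to cancel the leading divergence term yields equation \eqref{DE for phi2} for $\phi_2$, with the coefficient $2ic_iU'_{m,\gamma}(U_{m,\gamma}-c)/(U_{m,\gamma}-c_r)$ arising from the algebraic mismatch between $(U-c)^2$ and $(U-c_r)^2$.

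Next I would construct $\phi_1$ as the unique fixed point of the Volterra integral equation \eqref{general expression of phi1}, obtained by integrating the divergence-form ODE twice from $y_c$; the apparently singular factor $(U_{m,\gamma}(w)-c_r)^{-2}$ in the outer integrand is absorbed by the inner factor $(U_{m,\gamma}(z)-c_r)^2$ thanks to \ref{Monotone} (which ensures $|U_{m,\gamma}(z)-c_r|\approx |z-y_c|$ uniformly in $m,\gamma$ for $\gamma$ small). Picard iteration starting from the seed $\phi_1\equiv 1$ automatically gives $\phi_1(y_c)=1$ and $\phi_1'(y_c)=0$, and produces a convergent series whose $n$-th term is bounded by $(C|\lambda||y-y_c|^2)^n/(n!)^2$. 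This sum is comparable to an $I_0$-type modified Bessel function, yielding both the two-sided exponential bound $C^{-1}e^{C^{-1}\sqrt{-\lambda}|y-y_c|}\le \phi_1\le Ce^{C\sqrt{-\lambda}|y-y_c|}$ and the monotonicity $\phi_1\ge 1$ (the latter because each Picard term is non-negative), together with $\phi_1-1\le C\min\{|\lambda||y-y_c|^2,1\}\phi_1$. The first-derivative estimate follows by differentiating \eqref{general expression of phi1} once, and the second-derivative estimate comes directly from the ODE $\partial_{yy}\phi_1 = -\lambda\phi_1 - 2U'_{m,\gamma}\phi_1'/(U_{m,\gamma}-c_r)$ after combining it with the first-derivative bound.

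For $\phi_2$, the plan is to rewrite \eqref{DE for phi2} as a Volterra equation using $((U_{m,\gamma}-c)^2\phi_1^2)^{-1}$ as the integrating factor for $\phi_2'$ and integrating once more from $y_c$. The key observation is that the forcing carries an explicit $c_i$ in front, while the ratio $|U_{m,\gamma}-c|/|U_{m,\gamma}-c_r|$ is uniformly bounded on $\mathfrak{B}\cap\mathfrak{D}_{\epsilon_0}$ (this is the essential reason for restricting $c$ to $\mathfrak{D}_{\epsilon_0}$ with $\epsilon_0$ small). Combining this with $|\phi_1'/\phi_1|\lesssim \sqrt{-\lambda}|y-y_c|$ and the exponential two-sided bound on $\phi_1$, Picard iteration yields the global bound $|\phi_2-1|\lesssim \sqrt{-\lambda}|c_i|$ while the short-range bound $|\phi_2-1|\lesssim |\lambda||y-y_c|^2$ follows directly from the double integration on a small interval around $y_c$; taking the minimum gives \eqref{eq-phi2-est-1}. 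The derivative estimates \eqref{eq-phi2-est-2}--\eqref{eq-phi2-est-3} then follow from the ODE itself once the $L^\infty$ control on $\phi_2$ is in hand.

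The main obstacle I expect is keeping every constant independent of $m$, $\gamma$, $c$, $\lambda$, $y$, and $y_c$ simultaneously. This forces one to verify $|U_{m,\gamma}(z)-U_{m,\gamma}(y_c)|\approx |z-y_c|$ with a constant independent of $(m,\gamma)$ for $\gamma$ small, which is where the structural hypothesis on the perturbation $m\gamma^2\widetilde{\Gamma}(y/\gamma)$ (uniform monotonicity of $U_{m,\gamma}$) is used. A second book-keeping issue is that when $y_c$ lies outside $[-1,1]$ (i.e.\ $c_r\notin\mathrm{Ran}(U_{m,\gamma})$), the construction must be anchored at a boundary point instead, but the same Picard scheme applies. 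Beyond these uniform-in-parameter subtleties, no fundamentally new idea is required, and the lemma is essentially a parametrized refinement of results already developed in \cite{WeiZhangZhao2015, limasmoudizhao2022}.
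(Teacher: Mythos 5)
The paper does not actually prove this lemma: the ``proof'' consists of a citation to \cite[Proposition 5.3]{limasmoudizhao2022} for the $\phi_2$ estimates and to \cite{WeiZhangZhao2015} for the $\phi_1$ estimates, together with the observation that those arguments use only upper and lower bounds on $U'_{m,\gamma}$ and $\|U''_{m,\gamma}\|_{L^\infty}$, so the constants are uniform in $\gamma$ and $m$. Your outline reconstructs exactly the argument of those references --- the factorization $\phi=(U_{m,\gamma}-c)\phi_1\phi_2$, the divergence-form ODE $\partial_y((U_{m,\gamma}-c)^2h')+\lambda(U_{m,\gamma}-c)^2h=0$, the identification of the coefficient $2ic_iU'_{m,\gamma}(U_{m,\gamma}-c)/(U_{m,\gamma}-c_r)$ as the derivative of $(U_{m,\gamma}-c)^2/(U_{m,\gamma}-c_r)^2$ times $(U_{m,\gamma}-c_r)^2$, the Volterra/Picard construction with Bessel-type term bounds, and the uniformity coming from \ref{Monotone} --- so in substance it is the same proof the authors are pointing to, just written out.

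One step as you state it would not survive scrutiny: the ratio $|U_{m,\gamma}(z)-c|/|U_{m,\gamma}(z)-c_r|=\sqrt{(U_{m,\gamma}(z)-c_r)^2+c_i^2}\,/\,|U_{m,\gamma}(z)-c_r|$ is \emph{not} uniformly bounded on $\mathfrak{B}\cap\mathfrak{D}_{\epsilon_0}$; it blows up like $|c_i|/|z-y_c|$ as $z\to y_c$. The inner integrand of the $\phi_2$ equation is nonetheless integrable because it carries the factor $\phi_1'(z)$, which vanishes linearly at $z=y_c$ (indeed $|\phi_1'/\phi_1|\lesssim\sqrt{-\lambda}\,|z-y_c|$), so the product $U'_{m,\gamma}(z)\phi_1'(z)(U_{m,\gamma}(z)-c)/(U_{m,\gamma}(z)-c_r)$ stays bounded; the explicit prefactor $c_i$ then delivers \eqref{eq-phi2-est-1}. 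With that correction, and the boundary-anchoring remark you already make for $c_r\notin\mathrm{Ran}(U_{m,\gamma})$, your argument is complete and consistent with the cited sources.
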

\begin{proof}
    The proof is omitted here. We refer reader to \cite[Proposition 5.3] {limasmoudizhao2022} for the estimate of $\phi_2(y,m,c,\lambda)$ and to \cite{WeiZhangZhao2015} for more accurate estimate of $\phi_1(y,m,c_r,\lambda)$. There, the authors presented a complete argument to obtain all the estimates. Note that the estimates relays only on the upper and lower bound of $U'_{m,\gamma}$ and $\|U''_{m,\gamma}(y)\|_{L^{\infty}_y}$. Thus the constant is independent of $\gamma$. 
\end{proof}

Earlier in \eqref{good derivative} we define the notion of good derivative denoted by $\partial_G$. Here, we present an estimate when the good derivative acts on the function $\phi_2$ from Lemma~\ref{phi1 phi2}.

Let us introduce $\phi_1^{E}(y,m,c,\lambda)=\phi_1(y,m,\gamma,c_r,\lambda)\phi_2(y,m,\gamma,c,\lambda)$. Note that $\phi_1^{E}(y,m,\gamma,c,\lambda)=\phi_1(y,m,\gamma,c_r,\lambda)$ for $c=c_r$, since $\phi_2(y,m,\gamma,c,\lambda)\equiv 1$ for $c=c_r$. One can regard $\phi_1^{E}(y,m,\gamma,c,\lambda)$ as the complex extension of $\phi_1(y,m,\gamma,c_r,\lambda)$. The next lemma gives a good property of $\phi_1^{E}(y,m,\gamma,c,\lambda)$ when the `good derivative' $\partial_{c_i}+i\frac{\partial_y}{U_{m,\gamma}'(0)}$ acts on it. We present the lemma by taking $c_r=0$ and $\lambda=-1$, which is used in the proof. 

\begin{lemma}[Estimates on $\partial_c \phi$ ]\label{partial c phi1}
    Let $\phi$ be the function displayed in Lemma~\ref{phi1 phi2}. For $c=c_r+ic_i$ with $U_{m,\gamma}(y_c)=c_r$ and $\lambda=-1$, we have
\begin{equation}
\left| \partial_c \phi(y,m,\gamma,c_r+ic_i,-1)\right| \leq C,
\end{equation}
where the constant $C$ is independent of $\gamma,m,c$ and $\partial_c:=\partial_{c_r}+i\partial_{c_i}$.
\end{lemma}
\begin{proof}
As stated before in Lemma~\ref{phi1 phi2}, we know  that $\phi(y,m,\gamma,c_r+ic_i,\lambda)$ solves the Rayleigh equation \eqref{Rayleigh Equation} with boundary conditions $\phi(y_c,m,\gamma,c,\lambda)=0, \phi'(y_c,m,\gamma,c,\lambda)=U'_{m,\gamma}(y_c),$
where $U_{m,\gamma}(y_c)=c_r$. 

Applying $\partial_c$ to both sides of $\eqref{Rayleigh Equation}$ gives us
\begin{equation}\label{eq satisfied by partial c phi}
    -\partial^2_y \partial_c\phi(y,m,\gamma,c,k)+\dfrac{U''_{m,\gamma}}{U_{m,\gamma}-c}\partial_c\phi(y,m,\gamma,c,k)+\dfrac{U''_{m,\gamma}}{(U_{m,\gamma}-c)^2}\phi(y,m,\gamma,c,k)=-k^2\Phi(y,m,\gamma,c,k),
\end{equation} 
Consider an ODEtz for \eqref{eq satisfied by partial c phi}  which takes the form $\partial_c(\phi)= A \phi$ for some $A$ to be determined. By plugging it into \eqref{eq satisfied by partial c phi} and using the fact that $\phi$ solves \eqref{Rayleigh Equation}, we are able to say that 
\[
\partial_c\phi(y,m,\gamma,c,k)=\phi(y,m,\gamma,c,k)\int_{y_c}^{y}\dfrac{\int_{y_c}^{w}-U''_{m,\gamma}(z)(\phi_1^E)^2(z,m,\gamma,c,k) \;dz}{(U_{m,\gamma}(w)-c)^2(\phi_1^E)^2(w,m,\gamma,c,k)}\;dw,
\]
where $\phi_1^E(y,m,\gamma,c,\lambda)=\phi_1(y,m,\gamma,c_r,\lambda) \phi_2(y,m,\gamma,c,\lambda)$.

Taking advantage of the above representation and using the boundedness of $\phi_1$ and $\phi_2$ in Lemma~\ref{phi1 phi2}, we obtain
\[
|\partial_c\phi| \leq C.
\]
\end{proof}

\begin{lemma} [Estimates on $\partial_G \phi_1^E$ ]\label{partial G phi1}
Let $\phi_1^E(y,m,\gamma,c,\lambda)=\phi_1(y,m,\gamma,c_r,\lambda) \phi_2(y,m,\gamma,c,\lambda)$. For $c=c_r+ic_i$ with $U_{m,\gamma}(y_c)=c_r$ and $\lambda=-1$, we have
\begin{equation}\label{estimate on the good derivative of phi1E}
\left| \frac{\partial_G \phi_1^{E}(y,m,\gamma,c_r+ic_i,-1) }{\phi_1^{E}(y,m,\gamma,c_r+ic_i,-1)}\right|\leq C|y-y_c|^2,
\end{equation}
where the constant $C$ is independent of $\gamma,m,c$ and $\partial_G:=\partial_{c_i}+i\frac{\partial_y}{U'_{m,\gamma}(y_c)}$.
\end{lemma}

\begin{proof}
From Lemma~\ref{phi1 phi2}, we have $\phi_1^E(y,m,\gamma,c_r+ic_i,-1)=\phi(y,m,\gamma,c_r+ic_i,-1) /(U_{m,\gamma}(y)-U_{m,\gamma}(y_c)-ic_i)$. As a consequence, $\phi_1^{E}(y,m,\gamma,c_r+ic_i,-1)$ solves 

\begin{equation}\label{eq satisfied by phi 1E}
\bigg((U_{m,\gamma}(y)-U_{m,\gamma}(y_c)-ic_i)^2(\phi_1^E)'(y,m,\gamma,c,-1)\bigg)'=\phi_1^E(y,m,\gamma,c,-1)(U_{m,\gamma}(y)-U_{m,\gamma}(y_c)-ic_i)^2.
\end{equation}
Applying the good derivative $\partial_G$ to both sides of the above equation,
we arrive at 
\begin{equation}\label{partial G DE}
    \begin{aligned}
   &\bigg((U_{m,\gamma}(y)-U_{m,\gamma}(y_c)-ic_i)^2\partial_G(\phi_1^E)'(y,m,\gamma,c_r+ic_i,-1)\bigg)'\\&\quad=(\partial_G(\phi_1^E(y,m,\gamma,c_r+ic_i,-1)))(U_{m,\gamma}(y)-U_{m,\gamma}(y_c)-ic_i)^2\\&\qquad- \bigg(\dfrac{\partial_G\Big((U_{m,\gamma}(y)-U_{m,\gamma}(y_c)-ic_i)^2\Big)}{(U_{m,\gamma}(y)-U_{m,\gamma}(y_c)-ic_i)^2}\bigg)'(U_{m,\gamma}(y)-U_{m,\gamma}(y_c)-ic_i)^2(\phi_1^E)'(y,m,\gamma,c_r+ic_i,-1).
\end{aligned}
\end{equation}

Moreover, it is equivalent to saying that
\begin{equation}
    \begin{aligned}
   &\bigg((U_{m,\gamma}(y)-U_{m,\gamma}(y_c)-ic_i)^2\left(\frac{\partial_G\phi_1^E(y,m,\gamma,c_r+ic_i,-1)}{\phi_1^E(y,m,\gamma,c_r+ic_i,-1)}\right)'(\phi_1^E)^2(y,m,\gamma,c_r+ic_i,-1)\bigg)'\\&\quad=- \bigg(\dfrac{\partial_G\Big((U_{m,\gamma}(y)-U_{m,\gamma}(y_c)-ic_i)^2\Big)}{(U_{m,\gamma}(y)-U_{m,\gamma}(y_c)-ic_i)^2}\bigg)'(U_{m,\gamma}(y)-U_{m,\gamma}(y_c)-ic_i)^2\\&\qquad \qquad \times (\phi_1^E)'(y,m,\gamma,c_r+ic_i,-1)\phi_1^E(y,m,\gamma,c_r+ic_i,-1).
\end{aligned}
\end{equation}
Solving for $\frac{\partial_G\phi_1^E}{\phi_1^E}$ gives us
\begin{equation}\label{good derivative phi1E}
\begin{aligned}
    &\frac{\partial_G\phi_1^E(y,m,\gamma,c_r+ic_i,-1)}{\phi_1^E(y,m,\gamma,c_r+ic_i,-1)}
    \\&\quad=-\int_{y_c}^y\dfrac{1}{(U_{m,\gamma}(w)-U_{m,\gamma}(y_c)-ic_i)^2(\phi_1^E)^2(w,m,\gamma,c_r+ic_i,-1)}\\
    &\quad\quad\times\int_{y_c}^w\bigg(\dfrac{\partial_G\Big((U_{m,\gamma}(z)-U_{m,\gamma}(y_c)-ic_i)^2\Big)}{(U_{m,\gamma}(z)-U_{m,\gamma}(y_c)-ic_i)^2}\bigg)'(U_{m,\gamma}(z)-U_{m,\gamma}(y_c)-ic_i)^2\\& \quad \quad \times (\phi_1^E)'(z,m,\gamma,c_r+ic_i,-1)\phi_1^E(z,m,\gamma,c_r+ic_i,-1)\;dz\;dw.
\end{aligned}
\end{equation}
Let us take a closer look at the term above involving the good derivative,
\begin{equation}\label{expression of good derivative}
\begin{aligned}
&\bigg|\bigg(\dfrac{\partial_G\Big((U_{m,\gamma}(z)-U_{m,\gamma}(y_c)-ic_i)^2\Big)}{(U_{m,\gamma}(z)-U_{m,\gamma}(y_c)-ic_i)^2}\bigg)'\bigg|
\\&=\bigg|\dfrac{2\bigg(i\frac{U''_{m,\gamma}(z)}{U'_{m,\gamma}(y_c)}(U_{m,\gamma}(z)-U_{m,\gamma}(y_c)-ic_i)-(-i+i\frac{U'_{m,\gamma}(z)}{U'_{m,\gamma}(y_c)})U'_{m,\gamma}(z)\bigg)}{(U_{m,\gamma}(z)-U_{m,\gamma}(y_c)-ic_i)^2}\bigg|\\&
=\bigg|2\dfrac{\bigg(c_i\frac{U''_{m,\gamma}(z)}{U'_{m,\gamma}(y_c)}+i\bigg(\frac{U''_{m,\gamma}(z)}{U'_{m,\gamma}(y_c)}U_{m,\gamma}(z)+U'_{m,\gamma}(z)-\frac{(U'_{m,\gamma}(z))^2}{U'_{m,\gamma}(y_c)}-\frac{U_{m,\gamma}(y_c)U''_{m,\gamma}(z)}{U'_{m,\gamma}(y_c)}\bigg)}{(U_{m,\gamma}(z)-U_{m,\gamma}(y_c)-ic_i)^2}\bigg|\\&
\lesssim \frac{\lVert U''_{m,\gamma}(z)\rVert_{L^{\infty}}\lVert U'_{m,\gamma}(z)\rVert_{L^{\infty}}}{U'_{m,\gamma}(y_c)}\dfrac{1}{|z-y_c|+|c_i|}.
\end{aligned}
\end{equation}

It is clear that the expression on the right hand side of \eqref{expression of good derivative} has one singularity at $(z,c_i)=(y_c,0)$. When plugging this term back into the integral in \eqref{good derivative phi1E}, the simple singularity of \eqref{expression of good derivative} in the integrand can be cancelled by the zero of $(\phi_1^E)'$ and the two repeated zeros of $(U_{m,\gamma}(z)-U_{m,\gamma}(y_c)-ic_i)^2$ at $(z,c_i)=(y_c,0)$ can also be cancelled by the quadratic poles of $\phi'_1$ at the same point. More precisely, notice that 
\begin{equation}
\begin{aligned}\label{expansion of phi_1^E'}
 (\phi_1^E)'&=\phi'_1\phi_2+\phi_1\phi_2' \\&
            =\phi_2  \dfrac{-\lambda}{(U_{m,\gamma}(y)-c_r)^2}\int_{y_c}^{y}\phi_1(z,\lambda)(U_{m,\gamma}(z)-c_r)^2\;dz\\&
            \quad -i2c_i\phi_1  \dfrac{1}{(U_{m,\gamma}(y)-c)^2\phi^2_1(y,m,\gamma,c_r,\lambda)}\int_{y_c}^{y}\dfrac{U'_{m,\gamma}(z)(U_{m,\gamma}(z)-c)}{(U_{m,\gamma}(z)-c_r)}\phi'_1\phi_1\phi_2\;dz.
\end{aligned}
\end{equation}
Plugging it back into the integrand in \ref{good derivative phi1E} allows us to say that the double poles at $(y,c_i)=(y_c,0)$ of $(\phi_1^E)'$ brought about by the terms $(U_{m,\gamma}(y)-c_r)^2$ and $(U_{m,\gamma}(y)-c)^2$ can both be cancelled by the quadratic vanishing of the term  $(U_{m,\gamma}(z)-U_{m,\gamma}(y_c)-ic_i)^2$ in the integrand also at the same point. Furthermore, the simple pole due the term in the integrand of \ref{good derivative phi1E} involving $\partial_G$ is also annihilated by the vanishing of $(\phi_1^E)'$ at $(y,c_i)=(y_c,0)$ which can be observed from \eqref{expansion of phi_1^E'}.
Lastly, all terms in the integrand are all bounded as a consequence of estimates in Lemma~\ref{phi1 phi2}. Therefore, the integral is well-defined. Altogether, we obtain the estimate \eqref{estimate on the good derivative of phi1E}. Hence, the proof is complete.\qedhere
\end{proof}

\begin{lemma} [Estimates on $\partial_m \phi_1^E$ ]\label{partial m phi1}
Let $\phi_1^E(y,m,\gamma,c,\lambda)=\phi_1(y,m,\gamma,c_r,\lambda) \phi_2(y,m,c,\lambda)$. For $c=c_r+ic_i$ and $\lambda=-1$, we have
\[
\left| \frac{\partial_m \phi_1^{E}(y,m,\gamma,c_r+ic_i,-1) }{\phi_1^{E}(y,m,\gamma,c_r+ic_i,-1)}\right|\leq C\gamma|y-y_c|^2,
\]
with constant $C$ independent of $\gamma,m,c,y$.
\end{lemma}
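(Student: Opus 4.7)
The plan is to mirror the argument used for Lemma~\ref{partial G phi1}, replacing the good derivative $\partial_G$ by the parameter derivative $\partial_m$. Since $m$ is independent of $y$, $\partial_m$ commutes with $\partial_y$ (so this is actually cleaner than in Lemma~\ref{partial G phi1}, which required the $y$-independence of the coefficient $i/U'_{m,\gamma}(0)$ in $\partial_G$). Applying $\partial_m$ to the second-order ODE
\[
\bigg((U_{m,\gamma}(y)-ic_i)^2(\phi_1^E)'(y,m,\gamma,ic_i,-1)\bigg)'=\phi_1^E(y,m,\gamma,ic_i,-1)(U_{m,\gamma}(y)-ic_i)^2,
\]
and carrying out the same algebraic reorganization as in the proof of Lemma~\ref{partial G phi1} (i.e. using the ODE to eliminate the term containing $(\phi_1^E)''$), one arrives at the analogous identity
\[
\bigg((U_{m,\gamma}-ic_i)^2\Big(\tfrac{\partial_m\phi_1^E}{\phi_1^E}\Big)'(\phi_1^E)^2\bigg)'=-\bigg(\dfrac{\partial_m\big((U_{m,\gamma}-ic_i)^2\big)}{(U_{m,\gamma}-ic_i)^2}\bigg)'(U_{m,\gamma}-ic_i)^2(\phi_1^E)'\phi_1^E.
\]

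Next I would integrate this identity twice from $0$ to $y$. The initial conditions $\phi_1(0,m,\gamma,0,-1)=1$, $\phi_1'(0,m,\gamma,0,-1)=0$, $\phi_2(0,m,\gamma,ic_i,-1)=1$, $\phi_2'(0,m,\gamma,ic_i,-1)=0$ all hold independently of $m$, so $f:=\partial_m\phi_1^E/\phi_1^E$ satisfies $f(0)=0$ and $f'(0)=0$, killing both boundary terms and giving
\[
\frac{\partial_m\phi_1^E(y)}{\phi_1^E(y)}=-\int_0^y\frac{1}{(U_{m,\gamma}-ic_i)^2(\phi_1^E)^2}\int_0^w\bigg(\dfrac{\partial_m((U_{m,\gamma}-ic_i)^2)}{(U_{m,\gamma}-ic_i)^2}\bigg)'(U_{m,\gamma}-ic_i)^2(\phi_1^E)'\phi_1^E\,dz\,dw.
\]

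The bulk of the work is in estimating the double integral, and this is where the factor $\gamma|y|^2$ emerges. Since $\partial_m U_{m,\gamma}(y)=\gamma^2\widetilde\Gamma(y/\gamma)$, using $|\widetilde\Gamma(s)/s|\leq C$ and $\|\Gamma\|_{L^\infty}\leq C$, direct computation gives
\[
\bigg|\bigg(\dfrac{\partial_m((U_{m,\gamma}-ic_i)^2)}{(U_{m,\gamma}-ic_i)^2}\bigg)'\bigg|=\bigg|\dfrac{2\gamma\Gamma(y/\gamma)}{U_{m,\gamma}-ic_i}-\dfrac{2\gamma^2\widetilde\Gamma(y/\gamma)U'_{m,\gamma}}{(U_{m,\gamma}-ic_i)^2}\bigg|\leq\dfrac{C\gamma}{|y|+|c_i|}.
\]
Combining this with $|U_{m,\gamma}-ic_i|\lesssim |y|+|c_i|$, the estimates $|(\phi_1^E)'|\leq C|y|$ and $C^{-1}\leq|\phi_1^E|\leq C$ from Lemma~\ref{phi1 phi2}, the inner integrand is bounded by $C\gamma|z|(|z|+|c_i|)$, hence the inner integral by $C\gamma|w|^2(|w|+|c_i|)$. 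The outer integrand is then bounded by $C\gamma|w|^2/(|w|+|c_i|)\leq C\gamma|w|$, and a final integration in $w$ from $0$ to $y$ delivers the claimed $C\gamma|y|^2$ bound.

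The main obstacle, as in Lemma~\ref{partial G phi1}, is ensuring that the term $\partial_m\phi_1^E(U_{m,\gamma}-ic_i)^2$ that appears on both sides of the differentiated ODE cancels cleanly (this requires invoking the $\phi_1^E$-ODE on the $(\phi_1^E)''$ term produced by expanding $\big[\partial_m((U-ic_i)^2)(\phi_1^E)'\big]'$), after which the proof becomes a bookkeeping exercise in the pointwise bounds above. The gain of the factor $\gamma$ (rather than the $O(1)$ factor present in Lemma~\ref{partial G phi1}) comes entirely from the smallness $|\partial_m U_{m,\gamma}|\lesssim \gamma|y|$, i.e. from the fact that the perturbation $m\gamma^2\widetilde\Gamma(y/\gamma)$ is quadratically small in $\gamma$.
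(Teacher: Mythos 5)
Your proposal is correct and follows essentially the same route as the paper: differentiate the $\phi_1^E$-ODE in $m$, reorganize into a first-order equation for $\partial_m\phi_1^E/\phi_1^E$, integrate twice from the critical point, and extract the factor $\gamma$ from $\partial_m U_{m,\gamma}=\gamma^2\widetilde\Gamma(y/\gamma)$ via the bound $\big|\big(\partial_m((U_{m,\gamma}-ic_i)^2)/(U_{m,\gamma}-ic_i)^2\big)'\big|\lesssim\gamma/(|y|+|c_i|)$. Your final bookkeeping of the double integral is in fact slightly more explicit than the paper's, which only remarks that the singularities are cancelled by the zeros of $(\phi_1^E)'$ and $(U_{m,\gamma}-ic_i)^2$.
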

\begin{proof}
Here, the proof is a straight adaptation of the one in Lemma~\ref{partial G phi1}.
Again, from Lemma~\ref{phi1 phi2}, we have $\phi_1^E(y,m,\gamma,c_r+ic_i,-1)=\phi(y,m,\gamma,c_r+ic_i,-1) /(U_{m,\gamma}(y)-U_{m,\gamma}(y_c)-ic_i)$. As a consequence,  $\phi_1^{E}$ solves \eqref{eq satisfied by phi 1E}.
Upon applying  $\partial_m$ to both sides of $\eqref{eq satisfied by phi 1E}$, we arrive at 
\begin{equation}
    \begin{aligned}
   &\bigg((U_{m,\gamma}(y)-U_{m,\gamma}(y_c)-ic_i)^2\partial_m(\phi_1^E)'(y,m,\gamma,c_r+ic_i,-1)\bigg)'\\&
   =(\partial_m(\phi_1^E(y,m,\gamma,c_r+ic_i,-1)))(U_{m,\gamma}(y)-U_{m,\gamma}(y_c)-ic_i)^2\\&- \bigg(\dfrac{\partial_m\Big((U_{m,\gamma}(y)-U_{m,\gamma}(y_c)-ic_i)^2\Big)}{(U_{m,\gamma}(y)-U_{m,\gamma}(y_c)-ic_i)^2}\bigg)'(U_{m,\gamma}(y)-U_{m,\gamma}(y_c)-ic_i)^2(\phi_1^E)'(y,m,\gamma,c_r+ic_i,-1).
\end{aligned}
\end{equation}

Moreover, it is equivalent to saying that
\begin{equation}
    \begin{aligned}
   &\bigg((U_{m,\gamma}(y)-U_{m,\gamma}(y_c)-ic_i)^2\left(\frac{\partial_m\phi_1^E(y,m,\gamma,c_r+ic_i,-1)}{\phi_1^E(y,m,\gamma,c_r+ic_i,-1)}\right)'(\phi_1^E)^2(y,m,\gamma,c_r+ic_i,-1)\bigg)'\\&\quad=- \bigg(\dfrac{\partial_m\Big((U_{m,\gamma}(y)-U_{m,\gamma}(y_c)-ic_i)^2\Big)}{(U_{m,\gamma}(y)-U_{m,\gamma}(y_c)-ic_i)^2}\bigg)'(U_{m,\gamma}(y)-U_{m,\gamma}(y_c)-ic_i)^2\\&
   \quad \quad  \times (\phi_1^E)'(y,m,\gamma,c_r+ic_i,-1)\phi_1^E(y,m,\gamma,c_r+ic_i,-1).
\end{aligned}
\end{equation}
Solving for $\frac{\partial_m\phi_1^E}{\phi_1^E}$ gives us
\[
\begin{aligned}
    &\frac{\partial_m\phi_1^E(y,m,\gamma,c_r+ic_i,-1))}{\phi_1^E(y,m,\gamma,c_r+ic_i,-1))}
    \\&\quad=-\int_{y_c}^y\dfrac{1}{(U_{m,\gamma}(w)-U_{m,\gamma}(y_c)-ic_i)^2)(\phi_1^E)^2(w,m,\gamma,c_r+ic_i,-1)}\\
    &\quad\quad\times\int_{y-y_c}^w\bigg(\dfrac{\partial_m\Big((U_{m,\gamma}(z)-U_{m,\gamma}(y_c)-ic_i)^2\Big)}{(U_{m,\gamma}(z)-U_{m,\gamma}(y_c)-ic_i)^2}\bigg)'(U_{m,\gamma}(z)-U_{m,\gamma}(y_c)-ic_i)^2\\&
    \quad \quad \times (\phi_1^E)'(z,m,\gamma,c_r+ic_i,-1)\phi_1^E(z,m,\gamma,c_r+ic_i,-1)\;dz\;dw.
\end{aligned}
\] 
Direct computation tells us that
\[
\begin{aligned}
    \bigg|\bigg(\dfrac{\partial_m\Big((U_{m,\gamma}(z)-ic_i)^2\Big)}{(U_{m,\gamma}(z)-U_{m,\gamma}(y_c)-ic_i)^2}\bigg)'\bigg|&\leq \lVert\Gamma \rVert_{L^{\infty}} \dfrac{\bigg|2\gamma \bigg((U_{m,\gamma}(z)-U_{m,\gamma}(y_c)-ic_i)+\gamma z U'_{m,\gamma}(z)\bigg)\bigg|}{(U_{m,\gamma}(z)-U_{m,\gamma}(y_c)-ic_i)^2}\\&
    \leq 2\gamma \lVert\Gamma \rVert_{L^{\infty}} \dfrac{\Big|\lVert U'_{m,\gamma}(z)\rVert_{L^{\infty}}\Big|}{|z-y_c|+|c_i|}.
\end{aligned}
\]

Observe that the term above has one singularity. In particular, upon plugging in this estimate into the inner integral in the expression of $\partial_m\phi_1^E/\phi_1^E$, the singularity in the inner integrand will be cancelled by the zero of $(\phi_1^E)'$. Further, the two singularities in the outter integrand can be overcome by the two zeros of $(U_{m,\gamma}(z)-U_{m,\gamma}(y_c)-ic_i)^2$. This results in the integral to be well-defined. Hence, we arrive at
\[
\bigg|\frac{\partial_m\phi_1^E(y,m,\gamma,c_r+ic_i,-1))}{\phi_1^E(y,m,\gamma,c_r+ic_i,-1))}\bigg|\leq C \gamma (y-y_c)^2.
\]
We would like to remark that the constant $C$ is independent of $\gamma$ and obtained by using the estimates in Lemma~\ref{phi1 phi2}. Hence, the proof is complete. \qedhere
\end{proof}

\begin{lemma}\label{F}
    Consider the function $\phi_1$ in \eqref{phi1}. It follows that 
    \[
    -C_1\dfrac{y^2}{\sqrt{-\lambda}}\geq \dfrac{\partial_\lambda \phi_1(y,m,\gamma,c_r,\lambda)}{\phi_1(y,m,\gamma,c_r,\lambda)}\geq -C_0\dfrac{y^2}{\sqrt{-\lambda}},
    \]
for some positive constants $C_1$ and $C_0$. In addition, we have the following estimate for $|\partial_\lambda m|$,
\begin{equation}
|\partial_\lambda m| \approx \Big( 1-e^{-2 \sqrt{-\lambda}}\Big).
\end{equation}
where $\partial_\lambda m$ is given by \eqref{monotonicity of m}.
\end{lemma}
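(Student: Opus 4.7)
The plan is to derive a closed integral representation for $F(y,\lambda):=\partial_\lambda\phi_1/\phi_1$ by a standard Wronskian manipulation and then insert the two-sided pointwise bounds on $\phi_1$ from Lemma~\ref{phi1 phi2}. Writing $V(y):=U_{m,\gamma}(y)-c_r$, the regular solution $\phi_1$ satisfies $(V^2\phi_1')'+\lambda V^2\phi_1=0$; differentiating in $\lambda$ gives the inhomogeneous companion $(V^2\psi')'+\lambda V^2\psi=-V^2\phi_1$ for $\psi:=\partial_\lambda\phi_1$. Multiplying the $\psi$-equation by $\phi_1$, the $\phi_1$-equation by $\psi$, and subtracting cancels the $\lambda$-terms and yields $\partial_y\bigl(V^2(\phi_1\psi'-\phi_1'\psi)\bigr)=-V^2\phi_1^2$. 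Since $\phi_1(y_c,\lambda)\equiv 1$ and $\phi_1'(y_c,\lambda)\equiv 0$ in $\lambda$, one has $\psi(y_c)=\psi'(y_c)=0$, and combined with $V(y_c)=0$ the boundary term at $y_c$ vanishes. Integrating once, dividing by $V^2\phi_1^2$, and integrating once more gives
\[
F(y,\lambda)=-\int_{y_c}^{y}\frac{1}{V^2(w)\phi_1^2(w)}\int_{y_c}^{w}\phi_1^2(z)V^2(z)\,dz\,dw,
\]
from which $F<0$ is immediate (and that sign alone is what the proof of Lemma~\ref{Monotonicity} used).

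For the quantitative two-sided bound I would substitute the pointwise comparison $\phi_1\asymp e^{c\sqrt{-\lambda}|y-y_c|}$ from Lemma~\ref{phi1 phi2} together with $V(y)\asymp U'(y_c)(y-y_c)$ from~\ref{Monotone}, so that $\phi_1^2V^2$ is comparable to $(z-y_c)^2 e^{2c\sqrt{-\lambda}|z-y_c|}$. A Laplace-type integration by parts produces a matching comparability for the inner integral, and the singular factors at $y_c$ in $1/(V^2\phi_1^2)$ cancel. The remaining outer integral is comparable to $|y-y_c|/\sqrt{-\lambda}$ in the regime $\sqrt{-\lambda}|y-y_c|\gtrsim 1$ and to $|y-y_c|^2$ when $\sqrt{-\lambda}|y-y_c|\lesssim 1$. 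Since the lemma is applied only for $\lambda$ in a compact subinterval of $(-\infty,0)$ near $-1$, these two regimes merge and produce $C_1 y^2/\sqrt{-\lambda}\le|F|\le C_0 y^2/\sqrt{-\lambda}$ on $[-1,1]$ after adjusting constants.

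For the equivalence $|\partial_\lambda\mathfrak{M}|\approx 1-e^{-2\sqrt{-\lambda}}$, inserting $F<0$ into \eqref{monotonicity of m} yields $|\partial_\lambda\mathfrak{M}|=2(U'(0))^2\int_{-1}^{1}|F|/(U\phi_1)^2\,dy$. Using $(U\phi_1)^2\asymp y^2 e^{2\sqrt{-\lambda}|y|}$ together with the bound on $|F|$ gives $|F|/(U\phi_1)^2\asymp e^{-2\sqrt{-\lambda}|y|}/\sqrt{-\lambda}$, and the explicit evaluation
\[
\int_{-1}^{1}\frac{e^{-2\sqrt{-\lambda}|y|}}{\sqrt{-\lambda}}\,dy=\frac{1-e^{-2\sqrt{-\lambda}}}{-\lambda}
\]
delivers the claimed equivalence on the compact range of $\lambda$ used in the paper.

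The main obstacle is tracking constants through the double integral: the upper and lower exponents in Lemma~\ref{phi1 phi2} are mismatched ($e^{C^{-1}\sqrt{-\lambda}|y-y_c|}$ versus $e^{C\sqrt{-\lambda}|y-y_c|}$), so a naive sandwich would only control $|F|$ between different powers of $|y-y_c|$. The cleanest remedy is to read off the linear ODE $(V^2\phi_1^2 F')'=-V^2\phi_1^2$ from the derivation above and to estimate $F$ and $F'$ directly from this equation with the initial conditions $F(y_c)=F'(y_c)=0$, so that the same exponential rate appears on both sides of the inequalities and the two-sided bound emerges with uniform constants.
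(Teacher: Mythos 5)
Your proposal follows essentially the same route as the paper: the Wronskian manipulation you perform is algebraically identical to the paper's ansatz $\partial_\lambda\phi_1=F\phi_1$, both leading to $(V^2\phi_1^2F')'=-V^2\phi_1^2$ and the same double-integral formula for $F$, after which the two-sided bound and the $|\partial_\lambda\mathfrak{M}|$ computation proceed exactly as in the paper via the exponential estimates of Lemma~\ref{phi1 phi2}. Your closing observation about the mismatched constants in the upper/lower exponential rates is a fair point that the paper glosses over (it silently uses the ratio $\phi_1(z)/\phi_1(w)$ inside the double integral), and your remedy of estimating directly from the ODE with $F(y_c)=F'(y_c)=0$ is consistent with what the paper actually does.
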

    \begin{proof}
        Recall that $\phi_1$ solves the equation \eqref{differential equation for phi1}. Differentiating \eqref{differential equation for phi1}  with respect to $\lambda$ yields
\begin{equation} \label{DE for partial lambda phi1}
(U^2(y) \partial_{\lambda}\phi'_1(y,m,\gamma,c_r,\lambda))'=-\lambda\partial_{\lambda} \phi_1(y,m,\gamma,c_r,\lambda)U^2-\phi_1(y,m,\gamma,c_r,\lambda) U^2(y).
\end{equation}
We think of $\partial_\lambda \phi_1$ as an unknown and consider the ansatz  \begin{equation}\label{ansatz for DE partial lambda phi1}
    \partial_{\lambda} \phi_1(y,m,\gamma,c_r,\lambda)= \mathfrak{F}(\phi_1,y)\phi_1(y,m,\gamma,c_r,\lambda)
\end{equation} for \eqref{DE for partial lambda phi1} . Plugging it into  \eqref{DE for partial lambda phi1} gives
\begin{equation*}
\begin{aligned}
&-\lambda \mathfrak{F}(\phi_1,y) \phi_1(y,m,\gamma,c_r,\lambda) U^2(y) -\phi_1(y,m,\gamma,c_r,\lambda) U^2(y)\\&\qquad=(U^2(y)\mathfrak{F}'(\phi_1,y)\phi_1(y,m,\gamma,c_r,\lambda) )'+(U^2(y) \mathfrak{F}(\phi_1,y) \phi'_1(y,m,\gamma,c_r,\lambda) )'\\
                                                      &\qquad=(U^2(y)\mathfrak{F}'(\phi_1,y)\phi_1(y,m,\gamma,c_r,\lambda) )'+(U^2(y) \phi'_1)'\mathfrak{F}(\phi_1,y) +U^2(y) \mathfrak{F}'(\phi_1,y)\phi'_1(y,m,\gamma,c_r,\lambda) \\
                                                      &\qquad=(U^2(y)\mathfrak{F}'(\phi_1,y)\phi_1)'-\lambda \mathfrak{F}(\phi_1,y)\phi_1(y,m,\gamma,c_r,\lambda)  U^2(y)+U^2(y) \mathfrak{F}'(\phi_1,y)\phi'_1(y,m,\gamma,c_r,\lambda) ,
\end{aligned}
\end{equation*}
from which we can say
\begin{equation}
\begin{aligned}
0&=(U^2(y))'\mathfrak{F}'(\phi_1,y) \phi^2_1(y,m,\gamma,c_r,\lambda)+U^2(y) \mathfrak{F}''(\phi_1,y) \phi^2_1(y,m,\gamma,c_r,\lambda)\\&\qquad +2U^2(y)\mathfrak{F}'(\phi_1,y) \phi'_1(y,m,\gamma,c_r,\lambda)\phi_1(y,m,\gamma,c_r,\lambda)+\phi^2_1(y,m,\gamma,c_r,\lambda)U^2(y)\\
&=(U^2(y) \mathfrak{F}'(\phi_1,y) \phi^2_1(y,m,\gamma,c_r,\lambda))'+\phi^2_1(y,m,\gamma,c_r,\lambda) U^2(y).
\end{aligned}
\end{equation} Straightforward computation yields
\begin{equation}
\mathfrak{F}(\phi_1,y)=-\int_0^y\dfrac{\int_{0}^{w}\phi^2_1(y,m,\gamma,c_r,\lambda) U^2(z) \; dz}{U^2(w) \phi^2_1(y,m,\gamma,c_r,\lambda)}\;dw\leq 0.
\end{equation}

Furthermore, we compute more precise upper and lower bounds of $\mathfrak{F}$. Without lost of generality, we assume that $w>0$ (i.e. $0\leq z<w$). Hence, we deduce the lower bound for $\mathfrak{F}$:
\begin{equation}
\begin{aligned}
0&\geq- \int_0^y\int_{0}^{w}\dfrac{\phi^2_1(z,m,\gamma,c_r,\lambda)U^2(z) \; dz}{U^2(w) \phi^2_1(w,m,\gamma,c_r,\lambda)}\;dw\\
  &\geq- \int_0^y\int_{0}^{w}e^{-\sqrt{-\lambda}|w-z|}\dfrac{U^2(z) \; dz}{U^2(w)}\;dw
  \geq \dfrac{y^2}{-\sqrt{-\lambda}},
\end{aligned}
\end{equation}
where we have used the inequality for $\phi_1$ in Lemma~\ref{phi1 phi2}. In the case when $w<0$, the same argument holds. Again, via a straightforward computation using the estimate for $\phi_1$ in Lemma~\ref{phi1 phi2}, we obtain an upper bound of $\mathfrak{F}$, namely $\mathfrak{F} \leq -C y^2/ \sqrt{-\lambda}$, where $C$ is some constant. Combining both bounds, we get
\begin{equation}\label{estimate for A}
-C_1\dfrac{y^2}{\sqrt{-\lambda}}\geq \mathfrak{F} \geq -C_0\dfrac{y^2}{\sqrt{-\lambda}}.
\end{equation}

Having obtained the estimate for $\mathfrak{F}$, we now proceed to derive the bound for $\partial_{\lambda} m$. We list out a number of inequalities that are useful for the derivation. First, we present a lower bound for $\partial_{\lambda} m$,

\begin{equation}
\begin{aligned}
|\partial_{\lambda} m| \gtrsim \int_{-1}^1 \dfrac{1}{\phi_1^2} \; dy &\gtrsim \Bigg( \int_{-1}^0 e^{2 \sqrt{-\lambda}y}\;dy + \int_{0}^1e^{-2 \sqrt{-\lambda}y}\;dy\Bigg)\\
&\gtrsim \dfrac{1}{\sqrt{-\lambda}} \bigg( 1-e^{-2 \sqrt{-\lambda}}\bigg).
\end{aligned}
\end{equation}
Via the same computations, we show that $\partial_{\lambda}m$ satisfies the following upper bound
\begin{equation}
\begin{aligned}
|\partial_{\lambda} m| \lesssim \int_{-1}^1 \dfrac{1}{\phi_1^2} \; dy &\lesssim \Bigg( \int_{-1}^0 e^{2 \sqrt{-\lambda}y}\;dy + \int_{0}^1e^{-2 \sqrt{-\lambda}y}\;dy\Bigg)\\
&\lesssim \dfrac{1}{\sqrt{-\lambda}} \bigg( 1-e^{-2 \sqrt{-\lambda}}\bigg).
\end{aligned}
\end{equation}

Finally, using all inequalities displayed above and inserting them into the expression in \eqref{monotonicity of m} gives us
\begin{equation}
|\partial_\lambda m| \approx \bigg( 1-e^{-2 \sqrt{-\lambda}}\bigg).
\end{equation}
\end{proof}

Next, we present you a lemma that provides a useful estimate in computing the bounds for $\partial_{c_i}W$ and $\partial_m W$. 
\begin{lemma} \label{Supremum estimate}
    Let $a<0<b$, for all $f\in H^{1}(a,b)$, we have the following estimate
    \begin{equation}
        \sup _{v\in [\frac{a}{2},\frac{b}{2}]}\sup_{c_i\in (0,1]}\left|\int_{a}^{b} \dfrac{1}{v-w+ic_i} f(v)\;dv\right|\lesssim \lVert f \rVert^{1/2}_{L^2}\bigg(\lVert f' \rVert^{1/2}_{L^2}+\lVert f \rVert^{1/2}_{L^\infty}\bigg).
    \end{equation}
\end{lemma}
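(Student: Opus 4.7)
The plan is to isolate the singular contribution from the pole of $1/(v-ic_i)$, which localizes at $v=0$ as $c_i\to 0^+$. I would decompose $f(v)=f(0)+g(v)$ where $g(v):=f(v)-f(0)$ satisfies $g(0)=0$; a direct application of Cauchy--Schwarz to $g(v)=\int_0^v f'(s)\,ds$ gives the pointwise bound $|g(v)|\leq |v|^{1/2}\|f'\|_{L^2}$. This splitting exploits the fact that, after subtracting $f(0)$, the simple pole of $1/(v-ic_i)$ near $v=0$ is tamed by the vanishing of $g$ there.

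First I would treat the constant piece. Since $a<0<b$, the integral $\int_a^b dv/(v-ic_i)=\log(b-ic_i)-\log(a-ic_i)$ (principal branch) is uniformly bounded in $c_i\in(0,1]$, so $|f(0)\int_a^b dv/(v-ic_i)|\lesssim |f(0)|$. The one-dimensional Gagliardo--Nirenberg inequality $|f(0)|^2\lesssim \|f\|_{L^2}\|f'\|_{L^2}+\|f\|_{L^2}^2/(b-a)$, together with the trivial embedding $\|f\|_{L^2}\leq \sqrt{b-a}\,\|f\|_{L^\infty}$, then produces
\[
|f(0)|\lesssim \|f\|_{L^2}^{1/2}\bigl(\|f'\|_{L^2}^{1/2}+\|f\|_{L^\infty}^{1/2}\bigr),
\]
which is already in exactly the target form.

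For the remainder $\int_a^b g(v)/(v-ic_i)\,dv$ the plan is to split the domain at a threshold $\delta>0$ to be optimized. On $\{|v|<\delta\}$ the pointwise bound on $g$ yields
\[
\int_{|v|<\delta}\frac{|g(v)|}{\sqrt{v^2+c_i^2}}\,dv\lesssim \|f'\|_{L^2}\int_{|v|<\delta}|v|^{-1/2}\,dv\lesssim \|f'\|_{L^2}\,\delta^{1/2},
\]
while on $\{|v|>\delta\}$, Cauchy--Schwarz combined with $|1/(v-ic_i)|\leq 1/|v|$ gives a contribution $\lesssim \|g\|_{L^2}\,\delta^{-1/2}\lesssim (\|f\|_{L^2}+|f(0)|)\delta^{-1/2}$. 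Balancing the two via AM--GM at $\delta\sim \|f\|_{L^2}/\|f'\|_{L^2}$ extracts the geometric mean $\|f\|_{L^2}^{1/2}\|f'\|_{L^2}^{1/2}$ from the $\|f\|_{L^2}$ component, and the $|f(0)|$ component is already dominated by the Step~1 bound. Summing the two pieces then produces the claimed estimate.

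The main subtlety --- and the reason the routine integration-by-parts estimate $C(\|f\|_{L^\infty}+\|f'\|_{L^2})$ is inadequate --- is producing the genuine geometric mean $\|f\|_{L^2}^{1/2}\|f'\|_{L^2}^{1/2}$ in place of a full $\|f'\|_{L^2}$ on the remainder, which is what allows the small prefactor $\|f\|_{L^2}^{1/2}$ to appear. The AM--GM optimization of the cutoff $\delta$ is tailored precisely for this, and the only delicate bookkeeping I anticipate is verifying that the $\|g\|_{L^2}$ factor, once expanded via $\|g\|_{L^2}\lesssim \|f\|_{L^2}+|f(0)|$, cleanly absorbs into the stated right-hand side through the Step~1 estimate for $|f(0)|$ without generating extraneous terms.
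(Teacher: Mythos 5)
Your route is genuinely different from the paper's. The paper views the quantity as the value near $v=0$ of the operator $T_{c_i,a,b}f(v)=\int_a^b f(w)/(v-w+ic_i)\,dw$, applies the interpolation inequality $\|h\|_{L^\infty}\lesssim \|h\|_{L^2}^{1/2}\|h'\|_{L^2}^{1/2}+\|h\|_{L^2}$ to $h=T_{c_i,a,b}f$, and closes via the uniform $L^2$-boundedness of $T_{c_i,a,b}$ together with an integration by parts for $\partial_v T_{c_i,a,b}f$ (which is where the $\|f\|_{L^\infty}$ enters, through the boundary terms). Your decomposition $f=f(0)+g$ with a cutoff at scale $\delta$ is a legitimate, more elementary alternative, and your Step 1 --- the uniform boundedness of $\int_a^b dv/(v-ic_i)$ and the bound $|f(0)|\lesssim \|f\|_{L^2}^{1/2}(\|f'\|_{L^2}^{1/2}+\|f\|_{L^\infty}^{1/2})$ --- is correct.

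There is, however, a genuine gap in your outer-region estimate. After Cauchy--Schwarz you obtain the contribution $(\|f\|_{L^2}+|f(0)|)\,\delta^{-1/2}$, and with $\delta\sim\|f\|_{L^2}/\|f'\|_{L^2}$ the piece $|f(0)|\,\delta^{-1/2}=|f(0)|\,\|f'\|_{L^2}^{1/2}\|f\|_{L^2}^{-1/2}$ is \emph{not} dominated by the Step 1 bound: substituting $|f(0)|\lesssim\|f\|_{L^2}^{1/2}\|f'\|_{L^2}^{1/2}+\|f\|_{L^2}^{1/2}\|f\|_{L^\infty}^{1/2}$ produces a full $\|f'\|_{L^2}$, which the right-hand side does not control (for a bump $f(v)=\chi(v/\epsilon)$ with $\chi(0)=1$ one has $\|f'\|_{L^2}\sim\epsilon^{-1/2}\to\infty$ while the claimed right-hand side stays $O(1)$). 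The repair is to not feed $f(0)$ back into $\|g\|_{L^2}$: on $\{|v|>\delta\}$ estimate $\int f(v)/(v-ic_i)\,dv$ by Cauchy--Schwarz ($\lesssim\|f\|_{L^2}\delta^{-1/2}$), and estimate the constant part $f(0)\int_{|v|>\delta}dv/(v-ic_i)$ by the same explicit logarithm computation as in Step 1 --- the $\log\delta$ contributions from $v=\pm\delta$ cancel up to a bounded imaginary part since $|(-\delta-ic_i)/(\delta-ic_i)|=1$, so this term is $\lesssim|f(0)|$ uniformly in $\delta$ and $c_i$, and is then absorbed by Step 1. (One also needs the minor caveat $\delta\le\min(|a|,b)$; when $\|f\|_{L^2}/\|f'\|_{L^2}$ exceeds this threshold one has $\|f'\|_{L^2}\lesssim\|f\|_{L^2}$ and the claim is immediate.) With that correction your argument closes.
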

\begin{proof}
    Define the following integral operator
    \[
    T_{c_i,a,b}f(v):=\int_{a}^b \dfrac{1}{v-w+ic_i}f(w)\;dw. 
    \]
Hence,
\begin{equation} \label{bound on A}
   \begin{aligned}
       &\sup_{c_i\in(0,1]}\left|p.v.\int_{a}^{b} \dfrac{1}{v-ic_i} f(v)\;dv\right|\leq \sup_{\substack{c_i \in (0,1] \\ v \in [\frac{a}{2},\frac{b}{2}]}}\Big|T_{c_i,a,b}f(v)\Big|\\ 
       &\lesssim \sup_{c_i \in (0,1]}\bigg(\lVert T_{c_i,a,b}f (v)\rVert^{1/2}_{L^2_v}\lVert \partial_v T_{c_i,a,b}f(v) \rVert^{1/2}_{L^2_v} + \lVert T_{c_i,a,b}f(v)\rVert_{L^2_v}\bigg). 
   \end{aligned}
\end{equation}

Observe that via integration by parts, we obtain
\[
\partial_v T_{c_i,a,b}f(v)=\dfrac{1}{v-b+ic_i}f(b)-\dfrac{1}{v-a+ic_i}f(a)+T_{c_i,a,b}(f'(v)).
\] Moreover, we have the following estimates:
\[
\begin{aligned}
    \sup_{c_i\in (0,1]}\lVert T_{c_i,a,b}f(v) \rVert_{L^2_v} &\lesssim \lVert f(v) \rVert_{L^2_v},\\
    \sup_{c_i\in (0,1]}\lVert \partial_v T_{c_i,a,b}f(v) \rVert_{L^2_v} &\lesssim \lVert f(v) \rVert_{L^\infty_v}+\sup_{\substack{c_i \in (0,1] }}\Big\| T_{c_i,a,b} f'(v) \Big\|_{L^2}\\
    &\lesssim \lVert f(v) \rVert_{L^\infty_v} +\sup_{c_i\in(0,1]} \lVert f'(v) \rVert_{L^2}.
\end{aligned}
\]
Combining all these estimates and plug them back into \eqref{bound on A} gives the desired bound in the statement of the lemma. \qedhere
\end{proof}
\begin{remark} \label{remark A7}
    For any $f\in H^{1}(-1,1)$, we have
    \[
    \begin{aligned}
    \int_{-1}^1 \dfrac{f(y)-f(y_c)}{(U_{m,\gamma}(y)-U_{m,\gamma}(y_c)-ic_i)^2}\;dy&=-\int_{-1}^1\dfrac{f(y)-f(y_c)}{U'_{m,\gamma}(y)} \partial_y \bigg(\dfrac{1}{(U_{m,\gamma}(y)-U_{m,\gamma}(y_c)-ic_i)}\bigg)\;dy\\&=\dfrac{f(y_c)-f(y)}{(U_{m,\gamma}(y)-U_{m,\gamma}(y_c)-ic_i)^2}\bigg(\dfrac{1}{(U_{m,\gamma}(y)-U_{m,\gamma}(y_c)-ic_i)}\bigg)\bigg|_{-1}^1\\& \qquad+ \int_{-1}^1 \dfrac{1}{(U_{m,\gamma}(y)-U_{m,\gamma}(y_c)-ic_i)}\partial_y\bigg(\dfrac{f(y)-f(y_c)}{U'_{m,\gamma}(y)}\bigg)\;dy.
    \end{aligned}
    \] Hence, via Lemma~\ref{Supremum estimate} one can estimate the above integral and obtain its upper bound. More precisely,
    \begin{equation}
    \begin{aligned}
      \sup_{y\in [\frac{1}{2},\frac{1}{2}]}  \sup_{c_i\in (0,1]}&\left|\int_{-1}^1 \dfrac{f(y)-f(y_c)}{(U_{m,\gamma}(y)-U_{m,\gamma}(y_c)-ic_i)^2}\;dy\right|\\&\lesssim \lVert \partial_y\bigg(\dfrac{f(y)-f(y_c)}{U'_{m,\gamma}(y)}\bigg)\rVert^{1/2}_{L^2}\\&
        \quad \times \bigg(\lVert \partial^2_y\bigg(\dfrac{f(y)-f(y_c)}{U'_{m,\gamma}(y)}\bigg) \rVert^{1/2}_{L^2}+\lVert \partial_y\bigg(\dfrac{f(y)-f(y_c)}{U'_{m,\gamma}(y)}\bigg) \rVert^{1/2}_{L^\infty}\bigg).
    \end{aligned}
    \end{equation}
\end{remark}

The following corollary is the consequence the previous lemma. 
\begin{corollary}\label{bounds on integral}
 Let $f \in H^{1}(-1,1) $, the following estimate holds
\[
\bigg|\sup_{0<|c_r|<1} \int_{-1}^{1}\dfrac{(U_{m,\gamma}(y)-c_r)}{(U_{m,\gamma}(y)-c_r)^2+c_i^2} f(y) \;dy\bigg| \leq C \lVert f \rVert^{1/2}_{L^2}\bigg(\lVert f' \rVert^{1/2}_{L^2}+\lVert f \rVert^{1/2}_{L^\infty}\bigg).
\]
\end{corollary}
\begin{proof}
The proof of this corollary follows directly from the proof of Lemma \eqref{Supremum estimate}. However, one has to first introduce a change of variable, namely $v=U_{m,\gamma}(y)$. 
\end{proof}

\section{Regularity}\label{Regularity}
Next, this appendix concerns the regularity of the background flow and the map $G$ or its extension $\widetilde{G}$. Observe that the assumptions on $\mathcal{U}$ in the lemma is generic. We also do not assume any oddness or eveness on $\mathcal{U}$. We demand that $\mathcal{U}$ and its even derivatives (if exist) to vanish at $y=0$. The regularity used for $\mathcal{U}$ is also weaker compared to the one in \cite{LinZeng2011}.

In practice, when employing the bifurcation argument, we shall specify $(\mathcal{U}(y),0)$; we deal with the background shear $(U_{m,\gamma}(y),0)$. It is important to note that the conclusion on the higher regularity on $\widetilde{G}$ in this lemma is used to obtain the higher regularity result in Lemma~\ref{Bifurcation}.

\allowdisplaybreaks
\begin{lemma}\label{regularity lemma}
 Fix $N \in \mathbb{Z}^+ \cup \{0\}$. Suppose that $\mathcal{U} \in C^{2N+3}(-1,1)$, $\mathcal{U}'>c_0>0$, $\mathcal{U}(0)=0$, $\mathcal{U}^{(2j)}(0)=0$ for $j=1,2,...N,N+1$, and $-C_0<\mathcal{U}''/\mathcal{U}<0$ for some $c_0, C_0>0$. Consider a function $\widetilde{\psi}_0$ obtained by modifying the background stream function
\begin{equation}\label{definition of tilde psi}
\widetilde{\psi}_0(y):=\left\{ \begin{aligned}
\psi_0(y)\; &\text{for } y\geq0,\\
-\psi_0(y)\; &\text{for } y<0,
\end{aligned} \right.
\end{equation}
where $\int_{0}^y \mathcal{U}(z)\;dz=\psi_0(y)$.
If  $G(\widetilde{\psi_0}(y))=\mathcal{U}'(y)$, then $G(\widetilde{\psi_0}(y))\in C^{N+1}(-1,1).$
\end{lemma}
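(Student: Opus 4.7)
The plan is to exploit the vanishing even--order derivatives of $\mathcal{U}$ at $0$ by introducing a smooth change of variables that unfolds the quadratic behavior of $\widetilde\psi_0$ near the origin, reducing the problem to analyzing a composition of maps with a complementary parity structure.

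Since $\mathcal{U}(0)=0$ and $\mathcal{U}'(0)>c_0>0$, $\psi_0$ has a nondegenerate quadratic minimum at $y=0$, so I would write $\psi_0(y)=y^2 q(y)$ with $q\in C^{2N+2}$, $q(0)=\mathcal{U}'(0)/2>0$. The vanishing conditions $\mathcal{U}^{(2j)}(0)=0$ for $j=1,\ldots,N+1$ translate to $\mathcal{U}$ being Taylor--odd at $0$ up to order $2N+2$; integrating, $\psi_0$ is Taylor--even, hence $q$ has vanishing odd Taylor coefficients up to order $2N$. Next define
\[
u(y):=y\sqrt{q(y)}\in C^{2N+2}(-1,1),
\]
which is a $C^{2N+2}$--diffeomorphism (since $u(0)=0$, $u'(0)=\sqrt{q(0)}>0$) satisfying $u(y)^2=\psi_0(y)$. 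Consequently
\[
\widetilde\psi_0(y)=\mathrm{sgn}(y)\,\psi_0(y)=u(y)\,|u(y)|,
\]
so setting $t=u(y)$ the equation $s=\widetilde\psi_0(y)=t|t|$ inverts to $t=\mathrm{sgn}(s)\sqrt{|s|}$, and $G(s)=\mathcal{U}'(y)=K\bigl(\mathrm{sgn}(s)\sqrt{|s|}\bigr)$ with $K(t):=\mathcal{U}'\bigl(u^{-1}(t)\bigr)\in C^{2N+2}$.

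The heart of the argument is to show that $K$ is Taylor--even at $t=0$ up to order $2N+1$, i.e.\ $K^{(2j+1)}(0)=0$ for $j=0,\ldots,N$. Here both ingredients of the composition cooperate: $\mathcal{U}'$ is Taylor--even at $0$ (its odd Taylor coefficients up to order $2N+1$ are precisely the quantities $\mathcal{U}^{(2j)}(0)/(2j-1)!$ assumed to vanish), and $u^{-1}$ is Taylor--odd at $0$ (inherited from $u(y)=y\sqrt{q(y)}$ via the Taylor--evenness of $q$). A formal--power--series argument, or equivalently Fa\`a di Bruno's formula, then shows that the composition $\mathcal{U}'\circ u^{-1}$ is Taylor--even to the required order; the remainder coming from $\mathcal{U}\in C^{2N+3}$ (as opposed to $C^\infty$) is handled via Taylor's theorem with integral remainder. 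Once this is established, the classical fact that a $C^{2m}$--function whose odd derivatives at $0$ vanish up to order $2m-1$ factors as $\widetilde K(t^2)$ with $\widetilde K\in C^{m}$ near $0$ yields $K(t)=\widetilde K(t^2)$ with $\widetilde K\in C^{N+1}$, and substitution gives $G(s)=\widetilde K(|s|)$, from which the claimed $C^{N+1}$ regularity follows after verifying that one--sided derivatives at $s=0$ match.

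The principal obstacle is the parity bookkeeping in the third step: tracking, at every order up to $2N+1$, that the Fa\`a di Bruno expansion of $K^{(k)}(0)$ only couples odd--order derivatives of $u^{-1}$ with even--order derivatives of $\mathcal{U}'$ (whence the assumed vanishings force $K^{(k)}(0)=0$ for odd $k$), and that the $C^{2N+3}$--remainder of $\mathcal{U}$ does not pollute the low--order coefficients of $K$. Once this combinatorial/parity check is in place, the remaining steps are essentially linear--algebraic manipulations of smooth compositions and elementary matching of one--sided Taylor expansions at the origin.
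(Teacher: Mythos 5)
Your route is genuinely different from the paper's. The paper argues by induction on $n$, writing $G^{(n)}(\psi_0(y))$ for $y>0$ as a quotient $\bigl(a_n(y^2)+b_n(y)\bigr)/\bigl(c_n(y^2)+d_n(y)\bigr)$ with $b_n,d_n\to 0$, and then matching the one-sided limits at $y=0$; you instead make the Morse-type substitution $\psi_0=u(y)^2$, $u=y\sqrt{q}$, and reduce everything to a parity statement about $K=\mathcal{U}'\circ u^{-1}$. The factorization $\psi_0=y^2q(y)$, the regularity of $u$, and the parity bookkeeping (odd $u^{-1}$ composed with Taylor-even $\mathcal{U}'$ gives Taylor-even $K$) are all correct.

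The gap is in your final step, and it is not a technicality. Your own analysis shows $K$ is Taylor-even at $t=0$, so $G(s)=K(\mathrm{sgn}(s)\sqrt{|s|})$ is, to the relevant order, an \emph{even} function of $s$: writing $K(t)=\widetilde K(t^2)+\rho(t)$ gives $G(s)=\widetilde K(|s|)+\cdots$, and a function of $|s|$ is $C^1$ at $s=0$ only if $\widetilde K'(0)=0$. Here $\widetilde K'(0)=K''(0)/2=\tfrac12\,\mathcal{U}'''(0)\bigl((u^{-1})'(0)\bigr)^2=\mathcal{U}'''(0)/\mathcal{U}'(0)$, so your construction yields $G'(0^{\pm})=\pm\,\mathcal{U}'''(0)/\mathcal{U}'(0)$: the one-sided derivatives you promise to match are \emph{opposite}, not equal. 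The hypotheses kill the even-order derivatives $\mathcal{U}^{(2j)}(0)$ but impose nothing on $\mathcal{U}'''(0)$; indeed the standing assumption $-C_0<\mathcal{U}''/\mathcal{U}<0$ forces $\mathcal{U}'''(0)/\mathcal{U}'(0)\in[-C_0,0]$, which is nonzero for the flows of interest. So the evenness of $G$ in $s$ that your parity argument produces is the obstruction, not the conclusion; completing the proof along these lines would require the additional vanishing of the odd derivatives $\mathcal{U}^{(2j+1)}(0)$ (or a modified definition of $G$ on $s<0$). I note that this is exactly the point where the paper's own proof is most terse: there $G'(\widetilde\psi_0(y))=\mathcal{U}''(y)/\widetilde\psi_0'(y)$ with $\widetilde\psi_0'=|\mathcal{U}|$, whose one-sided limits are again $\pm\,\mathcal{U}'''(0)/\mathcal{U}'(0)$, and the assertion that ``replacing $\psi_0$ by $-\psi_0$'' gives the same limit conceals the same sign. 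Your computation, carried to its end, exposes this issue rather than resolving it.
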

\begin{proof}
Let $\psi_0$ be the associated stream function of the background shear flow $(\mathcal{U},0)$. Since, $\mathcal{U}\in C^{2N+3}(-1,1)$, then $\psi_0 \in C^{2N+4}(-1,1).$ Further, from the monotonicity of $\mathcal{U}$ and the fact that $\mathcal{U}(0)=0,$ we can easily infer that $\mathcal{U}(y)<0$ for all $y\in[-1,0)$ and $\mathcal{U}(y)>0$ for all $y\in(0,1].$ As a result, the background stream function $\psi_0$ is decreasing on [-1,0] and increasing on [0,1]. Thus, $\widetilde{\psi}_0$ is increasing on $[-1,1]$. 

 Our method relies on the mathematical induction argument subject to the regularity exponent $N$. As the base case, we assume that 
 $N=0$. This is equivalent to saying that $\mathcal{U}\in C^3(-1,1)$. Differentiating $G$ in $y$ yields $G'(\widetilde{\psi}_0(y))= \mathcal{U}''(y)/\widetilde{\psi}_0'(y)$. Away from $y=0$, $G'(\widetilde{\psi}_0(y))$ is clearly well-defined. The subtlety occurs at $y=0$ 
as expected due to the definition of $\widetilde{\psi}_0$ in \eqref{definition of tilde psi}. In other words, we need to show that $\lim_{y\to 0^+}G'(\psi_0(y)$ and $\lim_{y\to 0^-}G'(-\psi_0(y)))$  exist and are  both equal. To prove this, we use the Taylor expansion of $\psi_0$ around $y=0$
\[
\psi_0(y)=\frac{\psi_0''(0)}{2}y^2+\frac{\psi_0^{(4)}(0)}{24}y^4+R(y):=F(y^2)+R(y),
\] where we have used the fact that $\mathcal{U}(0)=\mathcal{U}''(0)=0.$ Hence, the first derivative of $G$ can be expressed as follows
\begin{equation*}
\begin{aligned}
G'(\psi_0)&=\dfrac{\bigg(2F'(y^2)+4y^2F''(y^2)+R''(y)\bigg)'}{2yF'(y^2)+R'(y)}\\
                &=\dfrac{\bigg(w(y^2)+R''(y)\bigg)'}{2yF'(y^2)+R'(y)}\\
                &=\dfrac{w'(y^2)+\frac{R'''(y)}{2y}}{F'(y^2)+\frac{R'(y)}{2y}}:=\dfrac{a_1(y^2)+b_1(y)}{c_1(y^2)+d_1(y)}
\end{aligned}
\end{equation*} 
where we have done another grouping of terms and call it $w(y^2)$. Note that $\frac{R'''(y)}{2y}$ and $\frac{R'(y)}{2y}$ vanish as $y\to0^+$. Therefore, it is straightforward to see that $\lim_{y\to 0^+}G'(\psi_0(y))$ exists. Replacing $\psi_0$ by $-\psi_0$ and going through the same computation as above, we conclude that $\lim_{y\to 0^-}G'(-\psi_0(y))$ exists and is equal to $\lim_{y\to 0^+}G'(\psi_0(y))$. Hence $G\in C^1(\min \widetilde{\psi_0} ,\max \widetilde{\psi_0}).$ 

Now, we set $N=1$. Again, using the Taylor expansion of $\psi_0$, we obtain
 \begin{equation}
 \begin{aligned}
 G''(\psi_0(y))&=\dfrac{1}{\psi_0'}\bigg(\dfrac{a_1(y^2)+b_1(y)}{c_1(y^2)+d_1(y)}\bigg)'=\dfrac{1}{\psi_0'}\bigg(\dfrac{w'(y^2)+\frac{R'''(y)}{2y}}{F'(y^2)+\frac{R'(y)}{2y}}\bigg)'\\
                          &=\dfrac{1}{\psi_0'}\dfrac{2yw''(y^2)F'(y^2)+F'(y^2)(\frac{R'''}{2y})'+\frac{R'}{2y}2yw''(y^2)+\frac{R'}{2y}(\frac{R'''}{2y})'}{(F'(y^2)+\frac{R'}{2y})^2}\\&\qquad-\dfrac{2yw'(y^2)F''(y^2)+w'(y^2)(\frac{R'}{2y})'+\frac{R'''}{2y}2yF''(y^2)+\frac{R'''}{2y}(\frac{R'}{2y})'}{(F'(y^2)+\frac{R'}{2y})^2}\\
                           &=\dfrac{1}{\psi_0'}\dfrac{2yp(y^2)+q(y)}{r'(y^2)+s(y)}\\
                           &=\dfrac{1}{\big(F'(y^2)+\frac{R'(y)}{2y}\big)}\dfrac{p(y^2)+\frac{q(y)}{2y}}{r'(y^2)+s(y)}=:=\dfrac{a_2(y^2)+b_2(y)}{c_2(y^2)+d_2(y)}.
 \end{aligned}
 \end{equation}

For the sake of induction, we suppose that for $N=2,3,...n-1$ and $\mathcal{U} \in C^{2N+3}$, we have $G \in C^{N+1}$ with
\begin{equation*}
G^{(n)}(\psi_0(y))=\dfrac{1}{2yF'(y^2)+R'(y)}\Bigg(\dfrac{a_{n-1}(y^2)+b_{n-1}(y)}{c_{n-1}(y^2)+d_{n-1}(y)}\Bigg)',
\end{equation*}
where we have arranged terms such that $b_{n-1}$ and $d_{n-1}$ contains all terms consisting of $R$ and its higher derivatives coming from the Taylor expansion of $\psi_0$. With 
this in mind, we can explicitly represent $G^{(n)}$ as follows:
\begin{equation}\label{n-th derivative}
G^{(n)}(\psi_0(y))=\dfrac{a_n(y^2)+b_n(y)}{c_n(y^2)+d_n(y)}
\end{equation}
for some $a_n, b_n, c_n,$ and $d_n$ with $b_n(y), d_n(y) \to 0, a_n(y^2)\to a_n(0), c_n(y)\to c_n(0)$  as $y\to 0.$
 
Now, suppose that $N=n$, we show that for $\mathcal{U} \in C^{2n+3}$ satisfying the hypothesis in the lemma, we have $G\in C^{n+1}$. We begin by writing the $n+1$ derivative of $G(\psi_0)$ for $y>0$. Again, this derivative is well-defined due to the regularity of $\mathcal{U}$. Differentiating \eqref{n-th derivative}, we obtain
 \begin{equation}
 \begin{aligned}
 &G^{(n+1)}(\psi_0(y))\\&=\dfrac{1}{\psi_0'}\dfrac{\bigg(c_n(y^2)+d_n(y)\bigg)\bigg(2ya_n'(y^2)+b_n'(y)\bigg)-\bigg(a_n(y^2)+b_n(y)\bigg)\bigg(2yc_n'(y^2)+d_n'(y)\bigg)}{c_n^2(y^2)+2c_n(y^2)d_n(y)+d_n^2(y)}
 \end{aligned}
 \end{equation}
 By the same regrouping idea, we can write 
 \begin{equation*}
 G^{(n+1)}(\psi_0(y))=\dfrac{a_{n+1}(y^2)+b_{n+1}(y)}{c_{n+1}(y^2)+d_{n+1}(y)},
 \end{equation*}
 for some $a_{n+1}, b_{n+1}, c_{n+1}, d_{n+1}$. One can easily check that both $b_{n+1}(y)$ and $d_{n+1}(y)$ decay as $y\to 0$. Hence, $\lim_{y\to 0}G^{(n+1)}(\psi_0(y))$ exists. Therefore, we have shown that $G \in C^{n+1}$. The proof is complete. \qedhere
\end{proof}

\section*{Acknowledgments}

The authors would like to thank Hui Li for close readings and useful inputs of earlier versions of the manuscript.

\begin{center}
\bibliographystyle{alpha}
\bibliography{Ref.bib}

\newcommand{\etalchar}[1]{$^{#1}$}
\begin{thebibliography}{CZEW23}

\bibitem[BCZV19]{bedrossiancotizelatiVicol2019vortex}
Jacob Bedrossian, Michele Coti~Zelati, and Vlad Vicol.
\newblock Vortex axisymmetrization, inviscid damping, and vorticity depletion
  in the linearized 2d euler equations.
\newblock {\em Annals of PDE}, 5:1--192, 2019.

\bibitem[BFM21]{Berti2021}
M.~Berti, L.~Franzoi, and A.~Maspero.
\newblock Traveling quasi-periodic water waves with constant vorticity.
\newblock {\em Arch. Ration. Mech. Anal.}, 240(1):99--202, 2021.

\bibitem[BM15]{BedrossianMasoudi2015}
Jacob Bedrossian and Nader Masmoudi.
\newblock Inviscid damping and the asymptotic stability of planar shear flows
  in the 2{D} {E}uler equations.
\newblock {\em Publ. Math. Inst. Hautes \'{E}tudes Sci.}, 122:195--300, 2015.

\bibitem[Cas60]{Case1960}
K.~M. Case.
\newblock Stability of inviscid plane {C}ouette flow.
\newblock {\em Phys. Fluids}, 3:143--148, 1960.

\bibitem[CK98]{ChristKiselev1998}
Michael Christ and Alexander Kiselev.
\newblock Absolutely continuous spectrum for one-dimensional {S}chr\"{o}dinger
  operators with slowly decaying potentials: some optimal results.
\newblock {\em J. Amer. Math. Soc.}, 11(4):771--797, 1998.

\bibitem[CL23]{castro2023traveling}
{\'A}ngel Castro and Daniel Lear.
\newblock Traveling waves near couette flow for the 2d euler equation.
\newblock {\em Communications in Mathematical Physics}, pages 1--75, 2023.

\bibitem[CR71]{CrandallRabinowitz1971}
Michael~G. Crandall and Paul~H. Rabinowitz.
\newblock Bifurcation from simple eigenvalues.
\newblock {\em J. Functional Analysis}, 8:321--340, 1971.

\bibitem[CZEW23]{coti2023stationary}
Michele Coti~Zelati, Tarek~M Elgindi, and Klaus Widmayer.
\newblock Stationary structures near the kolmogorov and poiseuille flows in the
  2d euler equations.
\newblock {\em Archive for Rational Mechanics and Analysis}, 247(1):12, 2023.

\bibitem[DM18]{deng2018long}
Yu~Deng and Nader Masmoudi.
\newblock Long-time instability of the couette flow in low gevrey spaces.
\newblock {\em Communications on Pure and Applied Mathematics}, 2018.

\bibitem[DZ21]{DengZillinger2021}
Yu~Deng and Christian Zillinger.
\newblock Echo chains as a linear mechanism: norm inflation, modified exponents
  and asymptotics.
\newblock {\em Arch. Ration. Mech. Anal.}, 242(1):643--700, 2021.

\bibitem[FMM23]{franzoi2023space}
Luca Franzoi, Nader Masmoudi, and Riccardo Montalto.
\newblock Space quasi-periodic steady euler flows close to the inviscid couette
  flow.
\newblock {\em arXiv preprint arXiv:2303.03302}, 2023.

\bibitem[Gre00]{Grenier2000}
Emmanuel Grenier.
\newblock On the nonlinear instability of {E}uler and {P}randtl equations.
\newblock {\em Comm. Pure Appl. Math.}, 53(9):1067--1091, 2000.

\bibitem[HHS{\etalchar{+}}22]{haziot2022traveling}
Susanna Haziot, Vera Hur, Walter Strauss, John Toland, Erik Wahl{\'e}n, Samuel
  Walsh, and Miles Wheeler.
\newblock Traveling water waves—the ebb and flow of two centuries.
\newblock {\em Quarterly of applied mathematics}, 80(2):317--401, 2022.

\bibitem[How61]{Howard1961}
Louis~N. Howard.
\newblock Note on a paper of {J}ohn {W}. {M}iles.
\newblock {\em J. Fluid Mech.}, 10:509--512, 1961.

\bibitem[IIJ22]{ionescuSameerJia2022linear}
Alexandru~D Ionescu, Sameer Iyer, and Hao Jia.
\newblock Linear inviscid damping and vorticity depletion for non-monotonic
  shear flows.
\newblock {\em arXiv preprint arXiv:2301.00288}, 2022.

\bibitem[IJ20a]{IonescuJia2020}
Alexandru~D Ionescu and Hao Jia.
\newblock Inviscid damping near the couette flow in a channel.
\newblock {\em Communications in Mathematical Physics}, 374(3):2015--2096,
  2020.

\bibitem[IJ20b]{ionescuJia2020non}
Alexandru~D Ionescu and Hao Jia.
\newblock Nonlinear inviscid damping near monotonic shear flows.
\newblock {\em arXiv preprint arXiv:2001.03087}, 2020.

\bibitem[IJ22a]{IonescuJia2022}
Alexandru Ionescu and Hao Jia.
\newblock Axi-symmetrization near point vortex solutions for the 2d euler
  equation.
\newblock {\em Communications on Pure and Applied Mathematics}, 75(4):818--891,
  2022.

\bibitem[IJ22b]{ionescuJia2022spectraldensity}
Alexandru~D Ionescu and Hao Jia.
\newblock Linear vortex symmetrization: the spectral density function.
\newblock {\em Archive for Rational Mechanics and Analysis}, 246(1):61--137,
  2022.

\bibitem[Jia20a]{jia2020linearGevrey}
Hao Jia.
\newblock Linear inviscid damping in gevrey spaces.
\newblock {\em Archive for Rational Mechanics and Analysis}, 235(2):1327--1355,
  2020.

\bibitem[Jia20b]{jia2020linear}
Hao Jia.
\newblock Linear inviscid damping near monotone shear flows.
\newblock {\em SIAM Journal on Mathematical Analysis}, 52(1):623--652, 2020.

\bibitem[Kat95]{kato1995}
Tosio Kato.
\newblock {\em Perturbation theory for linear operators}.
\newblock Classics in Mathematics. Springer-Verlag, Berlin, 1995.
\newblock Reprint of the 1980 edition.

\bibitem[Kil02]{killip2002perturbations}
Rowan Killip.
\newblock Perturbations of one-dimensional schr{\"o}dinger operators preserving
  the absolutely continuous spectrum.
\newblock {\em International Mathematics Research Notices},
  2002(38):2029--2061, 2002.

\bibitem[Kis98]{Kiselev1998}
Alexander Kiselev.
\newblock Stability of the absolutely continuous spectrum of the
  {S}chr\"{o}dinger equation under slowly decaying perturbations and a.e.
  convergence of integral operators.
\newblock {\em Duke Math. J.}, 94(3):619--646, 1998.

\bibitem[Lin03]{Lin2003}
Zhiwu Lin.
\newblock Instability of some ideal plane flows.
\newblock {\em SIAM J. Math. Anal.}, 35(2):318--356, 2003.

\bibitem[LMZ22]{limasmoudizhao2022}
Hui Li, Nader Masmoudi, and Weiren Zhao.
\newblock A dynamical approach to the study of instability near couette flow.
\newblock {\em arXiv preprint arXiv:2203.10894}, 2022.

\bibitem[LWZZ22]{LinWeiZhangZhu2022}
Zhiwu Lin, Dongyi Wei, Zhifei Zhang, and Hao Zhu.
\newblock The number of traveling wave families in a running water with
  {C}oriolis force.
\newblock {\em Arch. Ration. Mech. Anal.}, 246(2-3):475--533, 2022.

\bibitem[LZ11]{LinZeng2011}
Zhiwu Lin and Chongchun Zeng.
\newblock Inviscid dynamical structures near {C}ouette flow.
\newblock {\em Arch. Ration. Mech. Anal.}, 200(3):1075--1097, 2011.

\bibitem[McF07]{orr1907}
W~McF.
\newblock Orr,‘‘stability and instability of steady motions of a perfect
  liquid,’’.
\newblock {\em Proc. Ir. Acad. Sect. A, Math Astron. Phys. Sci}, 27(9), 1907.

\bibitem[MZ20]{MasmoudiZhao2020}
Nader Masmoudi and Weiren Zhao.
\newblock Nonlinear inviscid damping for a class of monotone shear flows in
  finite channel.
\newblock {\em arXiv preprint arXiv:2001.08564}, 2020.

\bibitem[Rem98]{Remling1998}
Christian Remling.
\newblock The absolutely continuous spectrum of one-dimensional
  {S}chr\"{o}dinger operators with decaying potentials.
\newblock {\em Comm. Math. Phys.}, 193(1):151--170, 1998.

\bibitem[Sim00]{simon2000schrodinger}
Barry Simon.
\newblock Schr{\"o}dinger operators in the twenty-first century.
\newblock {\em Mathematical physics}, 2000:283--288, 2000.

\bibitem[Sin22]{sinambela2022existence}
Daniel Sinambela.
\newblock Existence and stability of interfacial capillary-gravity solitary
  waves with constant vorticity.
\newblock {\em arXiv preprint arXiv:2208.08103}, 2022.

\bibitem[VF03]{Vishik2003}
Misha Vishik and Susan Friedlander.
\newblock Nonlinear instability in two dimensional ideal fluids: the case of a
  dominant eigenvalue.
\newblock {\em Comm. Math. Phys.}, 243(2):261--273, 2003.

\bibitem[WZZ18]{WeiZhangZhao2015}
Dongyi Wei, Zhifei Zhang, and Weiren Zhao.
\newblock Linear inviscid damping for a class of monotone shear flow in
  {S}obolev spaces.
\newblock {\em Comm. Pure Appl. Math.}, 71(4):617--687, 2018.

\bibitem[WZZ19]{WeiZhangZhao2019}
Dongyi Wei, Zhifei Zhang, and Weiren Zhao.
\newblock Linear inviscid damping and vorticity depletion for shear flows.
\newblock {\em Annals of PDE}, 5:1--101, 2019.

\bibitem[WZZ20]{WeiZhangZhao2020}
Dongyi Wei, Zhifei Zhang, and Weiren Zhao.
\newblock Linear inviscid damping and enhanced dissipation for the kolmogorov
  flow.
\newblock {\em Advances in Mathematics}, 362:106963, 2020.

\bibitem[WZZ22]{wang2022dynamics}
Luqi Wang, Zhifei Zhang, and Hao Zhu.
\newblock Dynamics near couette flow for the $\beta$-plane equation.
\newblock {\em arXiv preprint arXiv:2202.05708}, 2022.

\bibitem[Zil17]{zillinger2017linear}
Christian Zillinger.
\newblock Linear inviscid damping for monotone shear flows.
\newblock {\em Transactions of the American Mathematical Society},
  369(12):8799--8855, 2017.

\end{thebibliography}
\end{center}
\end{document}